\renewcommand{\vec}[1]{\underline{#1}}
\newtheorem{lemma}{Lemma}[section]
\newtheorem{prop}[lemma]{Proposition}
\newtheorem{theorem}[lemma]{Theorem}
\newtheorem{cor}[lemma]{Corollary}
\theoremstyle{remark}
\newtheorem{rem}[lemma]{Remark}
\theoremstyle{definition}
\newtheorem{exam}[lemma]{Example}
\numberwithin{equation}{section}
\newcommand{\lge}{\langle}
\newcommand{\rge}{\rangle}
\newcommand{\ax}{\mathcal{A}}
\newcommand{\fx}{\mathcal{F}}
\newcommand{\mx}{\mathcal{M}}
\newcommand{\nx}{\mathcal{N}}
\newcommand{\ux}{\mathcal{U}}
\newcommand{\nz}{\mathbb{N}}
\newcommand{\rz}{\mathbb{R}}
\newcommand{\cz}{\mathbb{C}}
\newcommand{\ez}{\mathbb{E}}
\newcommand{\fz}{\mathbb{F}}
\newcommand{\zz}{\mathbb{Z}}
\newcommand{\pz}{\mathbb{P}}
\newcommand{\Ga}{\Gamma}
\newcommand{\Om}{\Omega}
\newcommand{\De}{\Delta}
\newcommand{\al}{\alpha}
\newcommand{\de}{\delta}
\newcommand{\si}{\sigma}
\newcommand{\ga}{\gamma}
\newcommand{\la}{\lambda}
\newcommand{\bt}{\beta}
\newcommand{\om}{\omega}
\newcommand{\eps}{\varepsilon}
\newcommand{\8}{\infty}
\newcommand{\td}{\widetilde}
\mathchardef\dash="2D
\newcommand{\Var}{\operatorname{Var}}
\newcommand{\Span}{\operatorname{span}}
\newcommand{\Dom}{\operatorname{Dom}}
\newcommand{\tr}{\operatorname{tr}}
\newcommand{\id}{\operatorname{id}}
\begin{document}
\title[Mixed $q$-Gaussian algebras]{Ultraproduct methods for mixed $q$-Gaussian algebras}
\author{Marius Junge}
\address{Department of Mathematics, University of Illinois, Urbana, IL 61801}
\email{mjunge@illinois.edu}
\author{Qiang Zeng}
\address{Mathematics Department, Northwestern University,
2033 Sheridan Road Evanston, IL 60208}
\email{qzeng.math@gmail.com}
\date{\today}

\subjclass[2010]{46L36, 46L53, 46N50, 81S05}
\keywords{$q$-Gaussian algebras, Wick product, hypercontractivity, Riesz transform, Poincar\'e inequality, approximation property, strong solidity}
\maketitle

\begin{abstract}
We provide a unified ultraproduct approach for constructing Wick words in mixed $q$-Gaussian algebras, which are generated by $s_j=a_j+a_j^*$, $j=1,\cdots,N$, where $a_ia^*_j - q_{ij}a^*_ja_i =\de_{ij}$. Here we also allow equality in $-1\le q_{ij}=q_{ji}\le 1$. Using the ultraproduct method, we construct an approximate co-multiplication of the mixed $q$-Gaussian algebras. Based on this we prove that these algebras are weakly amenable and strongly solid in the sense of Ozawa and Popa. We also encode Speicher's central limit theorem in the unified ultraproduct method, and show that the Ornstein--Uhlenbeck semigroup is hypercontractive, the Riesz transform associated to the number operator is bounded, and the number operator satisfies the $L_p$ Poincar\'e inequalities with constants $C\sqrt{p}$. 
\end{abstract}

\section{Introduction}

Group measure space constructions go back to the original work of Murray and von Neumann \cite{Mv36}. In the last decades Popa and his collaborators have solved many open problems about fundamental groups and uniqueness of Cartan subalgebras; see e.g. \cites{OP10, OP10b,PV10,HS11, PV14} and the references therein for more information. In parallel von Neumann algebras generated by $q$-commutation relations (motivated by physics and number theory) were introduced by Bo\.zejko and Speicher \cite{BS91}, and further investigated by Bo\.zejko--K\"ummerer--Speicher \cite{BKS}, Shlyakhtenko \cite{Sh04}, Nou \cite{Nou}, {\'S}niady \cite{Sn04}, Ricard \cite{Ri05}, Kennedy--Nica \cite{KN11}, and Avsec \cite{Av}, among others. More recently, Dabrowski \cite{Da14}, Guionnet and Shlyakhtenko \cite{GS14} have shown that for small $q$ the $q$-Gaussian algebras are isomorphic to free group factors. All these results on factoriality, embeddability in $R^{\om}$, and approximation properties face a similar problem: How to derive properties of von Neumann algebras from combinatorial structures given by the original $q$-commutation relations.

In this paper we study generalized $q$ commutation relations: Given a symmetric matrix $Q=(q_{ij})_{i,j=1}^N$, $q_{ij}\in[-1,1]$, Speicher \cite{Sp93} considered variables satisfying
\begin{equation}\label{qcom}
  a_ia^*_j - q_{ij}a^*_ja_i =\de_{ij}.
\end{equation}
The mixed $q$-Gaussian algebra $\Ga_Q$ is generated by the self-adjoint variables $s_j=a_j^*+a_j$ and admits a normal faithful tracial state (see Section 3 for more details). Bo\.zejko and Speicher took a systematic way in \cite{BS94} to construct the Fock space representation of the so-called braid relations, which is more general than \eqref{qcom}. Then various properties were studied in e.g. \cites{Nou, Kr00,Kr05}. As for \eqref{qcom}, in \cite{LP99} Lust-Piquard showed the $L_p$ boundedness of the Riesz transforms associated to the number operator of the system when $q_{ii}<1$. Other generalized Gaussian systems related to our investigation have also been studied; see e.g. \cite{GM02,Gu03}.

It is very tempting to believe that mixed $q$-Gaussian algebras behave in any respect the same way as the $q$-Gaussian algebras with constant $q$. Indeed, the $L_2$ space of such an algebra admits a decomposition
 \[ L_2(\Gamma_Q) = \bigoplus_{k=0}^\8 H^k_Q
 \]
into finite dimension subspaces $H^k_Q$ of dimension $N^k$, which are eigenspaces of the number operator. For fixed $q_{ij}=q$ the number operator can be defined in a functorial way following Voiculescu's lead \cite{VDN} for $q=0$. Indeed, for every real Hilbert $H$ one finds the $q$-Gaussian von Neumann algebra $\Gamma_q(H)$ and a group homomorphism
$\al:O(H)\to {\rm Aut}(\Gamma_q(H))$ such that for $o\in O(H)$ and $h\in H$
 \[ \al(o)(s^q(h))= s^q(o(h)).
 \]
Here $s^q(e_j)=s_j$ and $(e_j)$ is an orthonormal basis for the $N$ dimensional Hilbert space $H$. Then
 \[ T_t = E \al(o_{t})\pi ,
 \]
where $\pi:\Gamma_q(H)\to \Gamma_q(H\oplus H)$ is  the natural embedding with conditional expectation $E$ and
$$
o_t=
\Big( \begin{array}{cc}
e^{-t}\id& -\sqrt{1-e^{-2t}}\id\\
\sqrt{1-e^{-2t}} \id & e^{-t} \id
\end{array}\Big),
$$

For nonconstant $Q=(q_{ij})$ we can no longer refer to functoriality directly. One of the first results in this paper is to provide a unified approach to the Ornstein--Uhlenbeck semigroup for $|q_{ij}|\le 1$ including the classical cases $q=1$ for bosons and $q=-1$ for fermions. The fact that the dimension of the eigenspace $H^k_Q$ is not more than $N^k$ uniformly for all $Q$ is based on thorough analysis of different forms of Wick words and probabilistic  estimates (see Section 3).

Another new feature of these generalized relations comes from studying the operators
  \[ \Phi(x) = E(s_{N+1}xs_{N+1}), \]
where $x$ is generated by $s_1,\cdots,s_N$. For constant $q_{ij}=q$ we find
$\Phi(x)=q^{l(x)}x$ can be easily computed in terms of the length function $l(x)=k$ if $x\in H_Q^{k}$. The formula for general $Q$ is vastly more complicated. However, such expressions are crucial building blocks in proving strong solidity.

Let us recall some notions in operator algebras. We always assume the von Neumann algebras to be finite in this paper. Recall that a von Neumann algebra $\mx$ has the weak* completely bounded approximation property (w*CBAP) if there exists a net of normal, completely bounded, finite rank maps $\phi_\al: \mx\to\mx$ such that $\|\phi_\al\|_{cb}\le C$ for all $\al$ and $\phi_\al\to \id$ in the point weak* topology. Here $\|\cdot\|_{cb}$ denotes the completely bounded norm. The infimum of such constant $C$ is called the Cowling--Haagerup constant and is denoted by $\Lambda_{cb}(\mx)$. Cowling and Haagerup \cite{CH89} showed that a discrete group $G$ is weakly amenable if and only if its group von Neumann algebra $LG$ has w*CBAP. Thus, a von Neumann algebra with w*CBAP is also said to be weakly amenable. If $\Lambda_{cb}(\mx)=1$, $\mx$ is said to have the weak* completely contractive approximation property (w*CCAP). See e.g. \cite{BO08} for more details of the approximation properties. Following Ozawa and Popa's work \cite{OP10}, a von Neumann algebra $\mx$ is called strongly solid if the normalizer $\nx_\mx(P):=\{u\in \ux(\mx): uPu^* = P\}$ of any diffuse amenable subalgebra $P\subset \mx$ generates an amenable von Neumann algebra. Here $\ux(\mx)$ is the set of unitary operators in $\mx$. 
\begin{theorem}
  $\Ga_Q$ has w*CCAP and is strongly solid provided $\max_{1\le i,j\le N} |q_{ij}|<1$.
\end{theorem}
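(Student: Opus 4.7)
The plan is to adapt Avsec's proof \cite{Av} for the constant $q$ case to the mixed setting, using the approximate co-multiplication $\pi:\Ga_Q\to \Ga_Q \bar\otimes \Ga_Q$ announced earlier in the paper (and constructed via ultraproducts) as the substitute for the $O(H)$-functorial structure, which is no longer available when $Q$ is nonconstant. Both w*CCAP and strong solidity will be derived from a common deformation framework in the spirit of Popa's deformation/rigidity theory, with the approximate co-multiplication playing the role of the second tensor factor in Popa's malleable dilation.

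For the w*CCAP, I would first verify that the Ornstein--Uhlenbeck semigroup $T_t$ produced via the approximate co-multiplication acts as $e^{-tk}\id$ on the Wick eigenspace $H^k_Q$. Since $\dim H^k_Q\le N^k$, each $T_t$ is already finite-rank-approximable, so the real task is to promote this to a \emph{completely contractive} approximation. Following Ricard--Xu and Avsec, I would show that every radial multiplier $m_\phi$ acting by $\phi(k)$ on $H^k_Q$ satisfies $\|m_\phi\|_{cb}\le \|\phi\|$ for a suitable norm when $\max|q_{ij}|<1$. The key ingredient is a combinatorial Haagerup-type bound on Wick words of fixed length, which in the constant $q$ case (Nou, Kennedy--Nica) follows from functoriality; in the mixed case I would use the ultraproduct realization of Wick products from Section~3 to transfer the estimate from the free case ($Q=0$) at the cost of a constant depending on $\max|q_{ij}|$. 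Combining such a uniform cb bound with a suitable cutoff sequence $\phi_n$ yields normal, finite-rank, completely contractive maps $\Phi_n\to \id$ in the point weak$^*$ topology, hence $\Lambda_{cb}(\Ga_Q)=1$.

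For strong solidity, I would verify the hypotheses of the Ozawa--Popa criterion: that $\Ga_Q$ has the CMAP (established above), condition (AO)$^+$, and that the conjugation action of the normalizer of any diffuse amenable subalgebra on itself is weakly compact. The approximate co-multiplication provides an s-malleable deformation: one builds a one-parameter family of automorphisms $\al_t$ of an enlarged tracial algebra $\td{\mx}\supset\Ga_Q$ together with a period-two symmetry conjugating $\al_t$ with $\al_{-t}$, obtained by rotating inside $\Ga_Q\bar\otimes \Ga_Q$ along the two coordinates. Given a diffuse amenable $P\subset\Ga_Q$, a standard Popa transversality argument then forces either $\al_t\to\id$ uniformly on the unit ball of $P$, or an intertwiner into a corner of $\Ga_Q$ by a subalgebra smaller than expected. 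In the first case, the deformation extends to $\nx_{\Ga_Q}(P)$ by the weak compactness step, so $\nx_{\Ga_Q}(P)''$ inherits the uniform convergence, and the classical arguments of Chifan--Sinclair and Ozawa--Popa yield amenability of $\nx_{\Ga_Q}(P)''$. Condition (AO)$^+$ is checked via the Wick word expansion and the estimate that left and right multiplications by generators commute modulo compact operators on $L_2(\Ga_Q)$, which is where the strict bound $\max|q_{ij}|<1$ is essential.

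The principal obstacle will be the combinatorial uniform bound on Wick words that underlies both the radial multiplier estimate and condition (AO)$^+$. In the constant $q$ case this is the Bo{\.z}ejko--Speicher/Nou inequality exploiting $O(H)$-equivariance; here the structure matrix $Q$ breaks all symmetries, the operator $\Phi(x)=E(s_{N+1}x s_{N+1})$ mentioned in the introduction no longer reduces to multiplication by $q^{l(x)}$, and one must control the norm growth of Wick products in $N^k$-dimensional spaces whose inner product varies subtly with $Q$. Once this estimate is established from the ultraproduct/Speicher central limit theorem framework (Section~3), the remaining deformation/rigidity and CCAP arguments are a relatively routine transcription of the constant $q$ proofs with $\pi$ replacing the functorial comultiplication.
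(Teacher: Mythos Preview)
Your proposal has the right general architecture (transfer from Avsec plus Popa deformation/rigidity), but two of its structural choices are off in ways that would block the argument.

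First, for CCAP: the approximate co-multiplication does \emph{not} land in $\Ga_Q\bar\otimes\Ga_Q$. The paper's map is
\[
\pi_\ux:\Ga_Q\hookrightarrow \prod_{m,\ux}\Ga_q(\ell_2^m)\bar\otimes\Ga_{\td Q\otimes\mathds{1}_m},
\]
where $Q=q\td Q$ with $\max|\td q_{ij}|<1$. The whole point is that the first tensor factor is a \emph{constant} $q$-Gaussian, so Avsec's radial multipliers $\varphi_\al(A)$ act there with $\|\varphi_\al(A)\|_{cb}\le 1+\eps$ already, and one simply pulls back $\varphi_\al(A)\otimes\id$ through $\pi_\ux$. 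There is no need to prove any Haagerup--Nou type inequality for mixed $Q$, nor to transfer from the free case $Q=0$; the combinatorial ``principal obstacle'' you identify is precisely what the paper sidesteps. Your plan would reinstate the hardest step that the ultraproduct trick was designed to avoid.

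Second, for strong solidity: the s-malleable dilation is not $\Ga_Q\bar\otimes\Ga_Q$ but $\Ga_{Q'}$ with $Q'=Q\otimes\mathds{1}_2$; for $q\neq 0$ (already in the constant case) $\Ga_q(H\oplus H)\not\cong\Ga_q(H)\bar\otimes\Ga_q(H)$, so there is no rotation automorphism on the tensor product. The paper works inside $\Ga_{Q'}$ via second quantization $\al_t=\Ga_Q(R_t)$. Moreover, the paper does not use condition (AO)$^+$ at all. Instead, following Houdayer--Shlyakhtenko and Avsec, it shows directly that for large enough $n$ the $\Ga_Q$-bimodule $E_{n-1}^\perp\subset L_2^0(\Ga_{Q'})$ is weakly contained in the coarse bimodule, by proving the maps $\Phi_{\xi,\eta}(x)=E_{\Ga_Q}(\xi x\eta)$ are Hilbert--Schmidt for $\xi,\eta\in F_n$ (Lemmas~\ref{lrcre}--\ref{schat}). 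This, together with the Popa transversality estimate (Proposition~\ref{meana}) on $\Ga_{Q'}$, feeds into the argument of \cite{HS11}. Verifying (AO)$^+$ for mixed $q$ via ``commutators compact modulo $K$'' is not addressed in the paper and would require a separate analysis of the Fock-space commutation structure that you have not supplied.
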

These properties extend similar results due to Avsec \cite{Av} for $q$-Gaussian von Neumann algebras. The w*CCAP for $\Ga_Q$ is proved using a transference method based on Avsec's w*CCAP result for the $q$-Gaussian algebras. Then we show a weak containment result of certain bimodules. These results, together with a modification of Popa's s-malleable deformation estimate, leads to strong solidity using a, by now,  standard argument. The method used here follows that of \cites{HS11,Av}. However, the techniques are more difficult than the case of $q$-Gaussian algebras. We have to use some non-trivial tricks to achieve certain results similar to those in \cite{Av}.

Ultraproduct method plays an essential role in many aspects of this paper. It is well known by far that CCAP is a stepping stone for proving strong solidity. The transference method mentioned above relies entirely on an embedding of $\Ga_Q$ into an ultraproduct of von Neumann algebras which preserves the Wick words. This allows us to transfer the CCAP result of the constant $q$ case of Avsec to the current mixed $q$ case. The argument can be illustrated using the following commutative diagram:
 \[
  \xymatrix{
  \Ga_Q\ar@{^{(}->}^<<<<{\pi_\ux}[r] \ar[d]^{\psi_\al} & \prod_{m,\ux} \Ga_{q}(\ell_2^m)\bar\otimes \Ga_{\td Q\otimes \mathds{1}_m}\ar[d]^{\varphi_\al(A)\otimes \id}\\
  \Ga_Q\ar@{^{(}->}[r]^<<<<{\pi_\ux}  & \prod_{m,\ux} \Ga_{q}(\ell_2^m)\bar\otimes \Ga_{\td Q\otimes \mathds{1}_m}.
  }
  \]
The notation will be explained in the proof of Theorem \ref{ccap}. The map $\pi_\ux$ can be understood as an approximate co-multiplication. Without the help of the ultraproduct method above, we will have to extend directly the argument for the constant $q$ case to the mixed $q$ case, which may be very hard due to the involved combinatorial structure.

We also prove some analytic properties for $\Ga_Q$ following the unified ultraproduct approach. The cornerstone is a Wick word decomposition result, whose proof involves some complicated combinatorial and probabilistic arguments. In this context, the ultraproduct construction provides a natural framework to encode Speicher's central limit theorem; see \cite{Sp92,Sp93,Ju06}. Furthermore, the Wick words are identified as some special sequences in the ultraproduct of spin matrix models. Once we have the Wick word decomposition, it follows immediately that the Ornstein--Uhlenbeck semigroup $(T_t)_{t\ge0}$ associated to $\Ga_Q$ is hypercontractive: For $1\le p,r<\8$,
  \[
  \|T_t\|_{L_p\to L_r}=1 \quad \mbox{if and only if}\quad e^{-2t}\le \frac{p-1}{r-1}.
  \]
Here $L_p=L_p(\Ga_Q,\tau_Q)$ is the noncommutative $L_p$ space associated to the canonical tracial state $\tau_Q$ on $\Ga_Q$. This result is a vast generalization of the work of Biane \cite{Bi} and Junge et al. \cite{JPP}.
Indeed, we obtain hypercontractivity results for  free products of $q$-Gaussian algebras, in particular, free products of Clifford algebras.
More exotic choices may be obtained for general $q_{ij}$.
We also recover and extend Lust-Piquard's result on the boundedness of Riesz transforms \cite{LP99}. Let $A$ be the number operator of $\Ga_Q$, which is also the generator of $T_t$. Define the gradient form (Meyer's ``carr\'e du champs'') associated to $A$ as
\[
\Ga(f,g) = \frac12[A(f^*) g + f^* Ag- A(f^*g)]
\]
for $f,g$ in the domain of $A$. We show that a) for $p\ge 2$,
\[
  c_p^{-1}\|A^{1/2} f\|_p\le \max\{\|\Ga(f,f)^{1/2}\|_p, \|\Ga(f^*,f^*)^{1/2}\|_p\}\le K_p\|A^{1/2}f\|_p
  \]
with $c_p=O(p^{2})$ and $K_p=O(p^{3/2})$; b) for $1< p< 2$,
\[
K_{p'}^{-1} \|A^{1/2} f\|_p\le \inf_{\substack{\de(f)=g+h\\ g\in G_p^c,h\in G_p^r}} \{\|E(g^*g)^{1/2}\|_p +\|E(hh^*)^{1/2}\|_p\}\le C_p\|A^{1/2} f\|,
\]
with $K_{p'}=O(1/(p-1)^{3/2})$ and $C_p=O(1/(p-1)^2)$, where $\de$ is a derivation related to the Riesz transforms, $G_p^c$ and $G_p^r$ are two Gaussian spaces, all to be defined below. Moreover, we obtain the $L_p$ Poincar\'e inequalities: For $p\ge 2$
\[
  \|f-\tau_Q(f)\|_p\le C\sqrt{p}\max\{\|\Ga(f,f)^{1/2}\|_p, \Ga(f^*,f^*)^{1/2}\|_p\}.
\]
This is an extension of similar results for the Walsh and Fermionic system in \cite{ELP}. It is known that the constant $C\sqrt{p}$ in such inequalities is crucial for proving concentration and transportation inequalities; see e.g. \cite{Ze13}.

The paper is organized as follows. Some preliminaries and notation are presented in Section 2. We construct the mixed $q$-Gaussian algebras and the Ornstein--Uhlenbeck semigroup in Section 3, where the Wick word decomposition result is also proved with a lengthy argument. The analytic properties are proved in Section 4, and the strong solidity is proved in Section 5.

\section{Preliminaries and notation}
\subsection{Notation}\label{s:nota}
We now introduce some notation used throughout this paper. We write $[N]=\{1,2,\cdots, N\}$ for $N\in \nz$. The set of non-negative integers is denoted by $\zz_+$. For $n\in \nz$, we denote by $M_n$ the algebra of $n\times n$ matrices. We will use some notation to analyze combinatorial structures following \cite{Sp92,JPP}. Denote by $P(d)$ the set of all partitions of $[d]=\{1,\cdots,d\}$. For $\si,\pi\in P(d)$, we write $\si\le \pi$ or $\pi\ge \si$ if $\si$ is a refinement of $\pi$. We denote the integer valued vectors by $\vec{i},\vec{j}$, etc. Given $\vec{i}=(i_1,\cdots,i_d) \in [N]^d$, we associate a partition $\si(\vec{i})$ to $\vec{i}$ by requiring $k,l\in [d]$ belonging to the same block of $\si(\vec{i})$ if and only if $i_k=i_l$.

We denote by $|S|$ or $\# S$ the cardinality of a (finite) set $S$. If $d$ is an even integer, we define $P_2(d)$ to be the set of pair partitions of $[d]$, i.e., $P_2(d)$ consists of $\pi=\{V_1,\cdots,V_{d/2}\}$ such that $|V_k|=2$ for every block $V_k$. Write $V_k=\{e_k,z_k\}$ with $e_k<z_k$ and $e_1<e_2<\cdots <e_{d/2}$. Given $\pi\in P_2(d)$, the set of crossings of $\pi$ is denoted by
\begin{equation}\label{cross}
  I(\pi)=\{\{k,l\}| 1\le k,l\le d/2, e_k<e_l<z_k<z_l\}.
\end{equation}
For $d\in \nz$, we denote by $P_1(d)$ the one element set of singleton partition of $[d]$, i.e., $P_1(d)=\{\si_0\}$ and $\si_0=\{\{1\},\{2\},\cdots,\{d\}\}$. Let $P_{1,2}(d)$ denote the set of partitions consisting of only singletons and pair blocks, and $P_r(d)=P(d)\setminus P_{1,2}(d)$. Let $\si\in P_{1,2}(d)$ be given by
\[
\si=\{V_1,\cdots, V_{s+u}\},
\]
where $V_j$'s are singletons ($V_j=\{e_j=z_j\}$) or pair blocks ($V_j=\{e_j, z_j\}$). Assume there are $s$ singleton blocks and $u$ pair blocks in $\si$. Let $\si_p$ be a subpartition consisting of the $u$ pair blocks of $\si\in P_{1,2}(d)$. Denote by $I_p(\si):=I(\si_p)$ the set of pair crossings of $\si$ given in \eqref{cross} and define
\[
I_{sp}(\si)=\{\{r,t\}: e_r< e_t=z_t< z_r\}
\]
to be the set of crossings between pairs and singletons.

Given a discrete group $G$, the left regular representation is denoted by $\la: G\to \ell_2(G)$, $\la(g)\de_h=\de_{gh}$, for $g,h\in G$ and $(\de_g)_{g\in G}$ is a canonical basis of $\ell_2(G)$. The group von Neumann algebra of $G$ is denoted by $LG$ and the canonical trace by $\tau_G$. The Kronecker delta function is denoted by $\de_{i,j}$. The use of two $\de$'s will not appear in the same place. It should be clear from context which one we are using. We let $\mathds{1}_n$ denote the $n\times n$ matrix with all entries equal to 1.

\subsection{Spin matrix model}\label{s:spin}
We consider the general spin matrix model following \cite{LP98, JPP}. Fix a finite integer $N$. Let $J_{N,m}=[N]\times [m]$ and $J_N=[N]\times \nz$. We usually do not specify the dependence on $N$ and simply write $J_m=J_{N,m}$ and $J=J_{N}$ if there is no ambiguity. We equip $J_m$ with the lexicographical order. Let $\eps:J \times J \to \{-1,1\}$ be a map satisfying $\eps(x,y)=\eps(y,x)$ and $\eps(x,x)=-1$ for all $x,y\in J$. Consider the complex unital algebra $\ax_{m}=\ax_m(N,\eps)$ generated by $(x_i(k))_{(i,k)\in J_{m}}$, where $x_i(k)$'s satisfy $x_i(k)^*=x_i(k)$ and \[
x_i(k)x_j(l)-\eps((i,k),(j,l)) x_j(l)x_i(k)=2\de_{(i,k),(j,l)}
\]
for $(i,k),(j,l)\in J_m$. It is well known that $x_i(k)$'s can be represented as tensor products of Pauli matrices. Thus $\ax_m$ can be represented as a matrix subalgebra of $M_{2^{Nm}}$. A generic element of $\ax_m$ can be written as a linear combination of words of the form
\[
x_B=x_{i_1}(k_1)\cdots x_{i_d}(k_d),
\]
where $B=\{(i_1,k_1), \cdots, (i_d,k_d)\}\subset J_m$. We say $x_B$ is a reduced word if $x_{i_r}(k_r)$'s in $x_B$ are pairwise different for $r=1,\cdots, d$. Using the commutation relation, every word $\xi$ can be written in the reduced form, denoted by $\td\xi$.  There is a canonical normalized trace $\tau_m$ on $\ax_m$ such that $\tau_m(x_B)= \de_{B,\emptyset}$ for a reduced word $x_B$.

\subsection{Pisier's method for multi-index summations}
Let $\si\in P(d)$ be a partition. In the following we need to estimate the $L_p$ norm of
$$\sum_{\vec{k}\in[m]^d: \si(\vec{k})\ge \si} x_1(k_1)\cdots x_d(k_d),
$$
where $x_i(k_i) \in \cap_{p<\8} L_p(\tau)$, and $L_p(\tau)$ is a noncommutative $L_p$ space associated to a trace $\tau$. To this end, we follow Pisier's method \cite{Pi00}. As illustrated in the proof of \cite{Pi00}*{Sublemma 3.3}, one can find $\xi_1(k_1),\cdots,\xi_d(k_d)\in LG$ such that $\tau_G(\xi_1(k_1)\cdots\xi_d(k_d))=1$ if and only if $\si(\vec{k})\ge\si$ and $\tau_G(\xi_1(k_1)\cdots\xi_d(k_d))=0$ otherwise. Here $G$ is a suitable product of free groups, $LG$ is the von Neumann algebra of $G$, and $\tau_G$ is the canonical trace on $LG$.

Let us explain this in more detail using an example. We denote by $\fz_m$ the free group with free generators $(g_i)_{i\in[m]}$. Suppose $d=6$ and $\si=\{\{1,3,5\},\{2,6\},\{4\}\}$. In this case, $G=\fz_m\times\fz_m \times\fz_m$ and for $i\in[m]$,
\begin{align*}
  \xi_1(i)=\la(g_i)^*\otimes 1\otimes 1, &\quad \xi_2(i)=1\otimes \la(g_i)^* \otimes 1,\\
  \xi_3(i)=\la(g_i)\otimes 1\otimes \la(g_i)^*,&\quad \xi_4(i)=1\otimes 1\otimes 1,\\
  \xi_5(i)=1\otimes 1\otimes \la(g_i),&\quad \xi_6(i)=1\otimes  \la(g_i)\otimes 1.
\end{align*}
Then $\tau_G(\xi_1(k_1)\cdots\xi_6(k_6))=1$ if and only if $k_1=k_3=k_5$ and $k_2=k_6$.

Now return to the general setting and consider the algebraic tensor product $LG \otimes L_p(\tau)$. Since $\tau_G\otimes \id$ extends to contractions on $L_p$, using H\"older's inequality, we have
\begin{align*}
  \Big\|\sum_{\vec{k}\in[m]^d: \si(\vec{k})\ge \si} x_1(k_1)\cdots x_d(k_d)\Big\|_p &= \Big\|\sum_{\vec{k}\in[m]^d} \tau_G(\xi_1(k_1)\cdots\xi_d(k_d))  x_1(k_1)\cdots x_d(k_d)\Big\|_p\\
  &\le \Big\|\sum_{\vec{k}\in[m]^d} \xi_1(k_1)\otimes x_1(k_1)\cdots \xi_d(k_d)\otimes x_d(k_d) \Big\|_p\\
  &\le \prod_{i=1}^d \Big\|\sum_{k_i=1}^m\xi_i(k_i)\otimes x_i(k_i)\Big\|_{pd}.
\end{align*}
If $i$ belongs to a singleton block of $\si$, then $\xi_i(k_i)=1$ and
$$\|\sum_{k_i=1}^m\xi_i(k_i)\otimes x_i(k_i)\|_{pd}=\|\sum_{k_i=1}^m x_i(k_i)\|_{pd}.$$
If $i$ does not belong to any singleton of $\si$, then it is well known that
\[
\|\sum_{k_i=1}^m \xi_i(k_i)\otimes x_i(k_i)\|_{pd} = \|\sum_{k_i=1}^m \la(g_i)\otimes x_i(k_i)\|_{pd}.
\]
By \cite{Pi00}*{Lemma 3.4}, we have for any even integer $p\ge2$,
\[
\|\sum_{k_i=1}^m \la(g_i)\otimes x_i(k_i)\|_{p} \le \frac{3\pi}{4}\max\Big\{\Big\|(\sum_{k_i} x_i(k_i)^* x_i(k_i))^{1/2}\Big\|_p, ~\Big\|(\sum_{k_i} x_i(k_i) x_i(k_i)^*)^{1/2}\Big\|_p \Big\}.
\]
We record this result as follows. Denote by $\si_{sing}$ and $\si_{ns}$ the union of singletons and the union of non-singleton blocks of $\si$ respectively. Thus we have $\#\si_{sing} + \#\si_{ns}=d$.
\begin{prop}\label{pbound}
  Let $\si\in P(d)$ be a partition and $x_i(k_i) \in L_p(\tau)$ for $\vec{k}\in [m]^d, i\in [d]$. Then for any even integer $p\ge 2$,
  \begin{align*}
  \Big\|\sum_{\vec{k}: \si(\vec{k})\ge \si} x_1(k_1)\cdots x_d(k_d) \Big\|_p &\le (\frac{3\pi}{4})^{\#\si_{ns}} \prod_{i\in \si_{sing}}\Big\|\sum_{k_i=1}^m x_i(k_i)\Big\|_{pd}\cdot\\
  & \prod_{i\in \si_{ns}} \max\Big\{\Big\|(\sum_{k_i=1}^m x_i(k_i)^* x_i(k_i))^{1/2}\Big\|_{pd}, ~\Big\|(\sum_{k_i=1}^m x_i(k_i) x_i(k_i)^*)^{1/2}\Big\|_{pd} \Big\}.
  \end{align*}
\end{prop}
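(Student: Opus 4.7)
The plan is essentially to assemble the ingredients already introduced in the paragraph preceding the proposition. First, I would invoke Pisier's construction of the indicator elements: for the given partition $\si$, choose a suitable product $G$ of free groups and elements $\xi_i(k_i)\in LG$ (built out of $\la(g_i)$ and $1$ as in the worked example with $d=6$) such that $\tau_G(\xi_1(k_1)\cdots\xi_d(k_d))=\mathbf{1}_{\si(\vec k)\ge\si}$. This lets me rewrite the constrained sum as
\[
\sum_{\vec k:\si(\vec k)\ge\si} x_1(k_1)\cdots x_d(k_d) = (\tau_G\otimes\id)\Big(\sum_{\vec k\in[m]^d}\xi_1(k_1)\otimes x_1(k_1)\cdots\xi_d(k_d)\otimes x_d(k_d)\Big),
\]
and, since $\tau_G\otimes\id$ is contractive on $L_p$, the $L_p$ norm on the left is bounded by the $L_p(LG\otimes\tau)$ norm of the unconstrained product on the right.

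Next I would apply the noncommutative H\"older inequality in the form $\|y_1\cdots y_d\|_p\le\prod_i\|y_i\|_{pd}$, with $y_i=\sum_{k_i}\xi_i(k_i)\otimes x_i(k_i)$. This reduces the task to estimating each factor $\|\sum_{k_i}\xi_i(k_i)\otimes x_i(k_i)\|_{pd}$ separately. For an $i$ in a singleton block of $\si$, the construction gives $\xi_i(k_i)=1$, and the factor collapses to $\|\sum_{k_i}x_i(k_i)\|_{pd}$. For an $i$ in a non-singleton block, $\xi_i(k_i)$ involves $\la(g_i)$ nontrivially, and a standard free-Fubini / symmetry argument (conjugating by free Haar unitaries to kill the other tensor factors) identifies the norm with $\|\sum_{k_i}\la(g_i)\otimes x_i(k_i)\|_{pd}$.

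Finally I would apply \cite{Pi00}*{Lemma 3.4} to each of these free sums, which yields the bound $\frac{3\pi}{4}\max\{\|(\sum_{k_i}x_i(k_i)^*x_i(k_i))^{1/2}\|_{pd},\|(\sum_{k_i}x_i(k_i)x_i(k_i)^*)^{1/2}\|_{pd}\}$. Multiplying these bounds over the $\#\si_{ns}$ non-singleton factors (hence the constant $(3\pi/4)^{\#\si_{ns}}$) and over the singleton factors produces exactly the claimed inequality. No real obstacle arises; the only mildly delicate point is the identification of $\|\sum_{k_i}\xi_i(k_i)\otimes x_i(k_i)\|_{pd}$ with $\|\sum_{k_i}\la(g_i)\otimes x_i(k_i)\|_{pd}$ when $\xi_i$ is a tensor product of regular representations at several coordinates, which is handled by the usual trick of absorbing the extra tensor factors via unitary conjugation in the free product.
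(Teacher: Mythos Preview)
Your proposal is correct and follows exactly the same approach as the paper: the proposition is stated there as a summary of the preceding discussion, which uses Pisier's indicator construction, contractivity of $\tau_G\otimes\id$, H\"older's inequality, and \cite{Pi00}*{Lemma 3.4} in precisely the order you describe. The only difference is that you spell out the ``mildly delicate'' identification $\|\sum_{k_i}\xi_i(k_i)\otimes x_i(k_i)\|_{pd}=\|\sum_{k_i}\la(g_i)\otimes x_i(k_i)\|_{pd}$ via unitary conjugation, whereas the paper simply asserts it as well known.
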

This result will be used in a slightly more general setting. We may have other fixed operators $y_j$'s inside the product $x_1(k_1)\cdots x_d(k_d)$. In this case, we may simply attach $y_j$'s to their adjacent $x_i(k_i)$'s and then invoke Proposition \ref{pbound}.

\section{Construction and Wick word decomposition}
The algebra we study here can be constructed using purely operator algebraic techniques if $\max_{1\le i,j\le N} |q_{ij}|<1$ as shown in \cite{BS94}. However, we use the probabilistic approach due to Speicher \cite{Sp92,Sp93}. This is convenient for studying the analytic properties following Biane's original idea \cite{Bi}. The main result of this section is Theorem \ref{wicdec}. Although the proof is unexpectedly lengthy, the analytic properties are easy consequences of this result. As a byproduct, we also provide an alternative construction of the Fock space representation.

\subsection{Speicher's CLT and von Neumann algebra ultraproducts}\label{s:cltul}
Let $Q=(q_{ij})_{i,j=1}^N$ be a symmetric matrix where $q_{ij}=q(i,j)\in [-1,1]$. Note that we {do not specify} the values on the diagonal. Following the notation of Section \ref{s:spin}, we consider a probability space $(\Om, \pz)$ and a family of independent random variables $\eps((i,k),(j,l)):\Om\to\{-1,1\}$ for $(i,k)<(j,l)$ with distribution
\begin{equation}\label{qprob}
\pz(\eps((i,k),(j,l))=-1)=\frac{1-q(i,j)}2, \quad \pz(\eps((i,k),(j,l))=1)=\frac{1+q(i,j)}2,
\end{equation}
so that $\ez[\eps((i,k),(j,l))]=q(i,j)$. Here $(i,k),(j,l)\in [N]\times \nz$. Given $\om\in \Om$, the commutation/anticommutation relation is fixed. We understand all generators $x_{i}(k)(\om)$ depend on $\om$. Restricting $k\in[m]$ we get random $\ax_m$. Since the dependence on $\om$ should be clear from context, we will not write $\om$ in the following to simplify notation. Let $\td{x}_i(m)=\frac1{\sqrt{m}}\sum_{k=1}^m x_i(k)$. 

The following central limit theorem result was due to Speicher \cite{Sp93}, which is a generalization of \cite{Sp92}. We streamline Speicher's proof in the appendix for the reader's convenience. The same strategy will be used repeatedly when we prove Theorem \ref{wicdec}.
\begin{theorem}\label{clt}
  Let $\vec{i}\in [N]^s$. Then
  \[
  \lim_{m\to \8}\tau_{m}(\td x_{i_1}(m)\cdots \td x_{i_s}(m))=\de_{s\in 2\zz}\sum_{\substack{\si\in P_2(s) \\ \si\le \si(\vec{i})}}\prod_{\{r,t\}\in I(\si)}q(i(e_r),i(e_t))\quad a.s.
  \]
  Here and in what follows, we understand $\prod_{\{i,j\}\in\emptyset} q(i,j)=1$.
\end{theorem}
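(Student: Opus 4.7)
The plan is to expand the product as
\begin{equation*}
\td x_{i_1}(m)\cdots\td x_{i_s}(m)=m^{-s/2}\sum_{\vec{k}\in[m]^s}x_{i_1}(k_1)\cdots x_{i_s}(k_s),
\end{equation*}
and then classify terms by the partition $\pi(\vec{i},\vec{k})\in P(s)$ that records letter coincidences: $r\sim t$ iff $(i_r,k_r)=(i_t,k_t)$. Each such $\pi$ refines $\si(\vec{i})$, and a block-by-block choice of $k$-values gives $m^{|\pi|}(1+O(1/m))$ tuples $\vec{k}$ realizing a fixed $\pi$.

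The first substantive step is a reduction identity: for $\pi(\vec{i},\vec{k})=\si\in P_2(s)$, iterated use of $x_a x_b=\eps\, x_b x_a$ followed by $x_a^2=1$ collapses the word to a scalar, and a careful sign bookkeeping shows
\begin{equation*}
\tau_m\bigl(x_{i_1}(k_1)\cdots x_{i_s}(k_s)\bigr)=\prod_{\{r,t\}\in I(\si)}\eps\bigl((i_{e_r},k_{e_r}),(i_{e_t},k_{e_t})\bigr).
\end{equation*}
For $\pi$ containing an odd-size block the reduced word is nonempty and the trace vanishes; for $\pi$ with only even blocks but $\pi\notin P_2(s)$ the trace remains bounded by $1$, but $|\pi|<s/2$, so the total contribution of those partitions is $O(m^{-1})$. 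In particular, when $s$ is odd no pair partition exists and the entire expression is $O(m^{-1/2})$, matching the factor $\de_{s\in 2\zz}$ on the right-hand side.

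For even $s$, the leading contribution is the sum over pair partitions $\si\le\si(\vec{i})$ of
\begin{equation*}
S_m(\si):=\frac{1}{m^{s/2}}\sum_{\vec{k}:\,\pi(\vec{i},\vec{k})=\si}\prod_{\{r,t\}\in I(\si)}\eps\bigl((i_{e_r},k_{e_r}),(i_{e_t},k_{e_t})\bigr).
\end{equation*}
Since the variables $\eps((i,k),(j,l))$ indexed by unordered pairs are jointly independent with $\ez[\eps((i,k),(j,l))]=q(i,j)$, each summand has expectation $\prod_{\{r,t\}\in I(\si)}q(i_{e_r},i_{e_t})$, and the count of admissible $\vec{k}$'s delivers $\ez[S_m(\si)]\to \prod_{\{r,t\}\in I(\si)}q(i_{e_r},i_{e_t})$ as $m\to\infty$.

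The main obstacle, and the source of real probabilistic content, is the passage from convergence in mean to almost sure convergence. My plan is to estimate $\Var(S_m(\si))$ by classifying pairs $(\vec{k},\vec{k}')$ according to how the unordered $\eps$-index pairs appearing in the two monomials overlap in $J_m\times J_m$: disjoint configurations reproduce $(\ez S_m)^2$ exactly, while overlapping configurations have count $O(m^{s-1})$, yielding $\Var(S_m(\si))=O(1/m)$. A parallel fourth-moment computation gives $\ez[(S_m-\ez S_m)^4]=O(m^{-2})$, which is summable in $m$, so Borel--Cantelli produces a.s.\ convergence along the full sequence. Since the triples $(s,\vec{i},\si)$ form a countable family, a single exceptional null set works uniformly, completing the argument.
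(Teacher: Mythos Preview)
Your proposal is correct and follows the same route as the paper: expand, group by partition, discard non-pair partitions by a block-count, reduce the pair-partition trace to a product of signs, take expectations, and then invoke Borel--Cantelli. The only place where you take an unnecessary detour is the variance estimate. An overlap of $\eps$-indices means an unordered pair $\{(i_{e_r},k_{e_r}),(i_{e_t},k_{e_t})\}$ coincides with $\{(i_{e_{r'}},k'_{e_{r'}}),(i_{e_{t'}},k'_{e_{t'}})\}$; since the values $k_{e_1},\dots,k_{e_{s/2}}$ are pairwise distinct in a pair-partition configuration, this pins down \emph{two} of the $k'$-coordinates, not one. Hence the overlapping count is $O(m^{s-2})$ rather than $O(m^{s-1})$, giving $\Var(S_m(\si))=O(m^{-2})$ directly. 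This is already summable in $m$, so the fourth-moment computation can be dropped --- and this is exactly what the paper does.
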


By Theorem \ref{clt}, we can find a full probability set $\Om_0\subset\Om$ such that the convergence holds for all $\om\in \Om_0$. Fix a free ultrafilter $\ux$ on $\nz$. By the well known ultraproduct construction of von Neumann algebras (see, e.g., \cite{BO08}*{Appendix A}), we have a finite von Neumann algebra $\ax_\ux:=\prod_{m,\ux}\ax_m$ with normal faithful tracial state $\tau_\ux=\lim_{m,\ux}\tau_m$. Put $\ax_\ux^\8=\cap_{p<\8} L_p(\ax_\ux)$. For each $\om\in\Om_0$,
\[
(\td x_i(m)(\om))^\bullet\in \ax_\ux^\8.
\]
Here and in what follows we write $(\td x_i(m)(\om))^\bullet$ for the element represented by $(\td x_i(m)(\om))_{m\in\nz}$ in the ultraproduct. We have the moment formula
\begin{equation}\label{mome}
\tau_\ux((\td x_{i_1}(m)(\om))^\bullet\cdots (\td x_{i_s}(m)(\om))^\bullet) =\de_{s\in 2\zz}\sum_{\substack{\si\in P_2(s) \\ \si\le \si(\vec{i})}}\prod_{\{r,t\}\in I(\si)}q(i(e_r),i(e_t)).
\end{equation}
It follows that
\[
\tau_\ux(|(\td x_i(m)(\om))^\bullet|^p)\le C p^p.
\]
By the uniqueness argument in \cite{Ju06}*{Section 6}, the von Neumann algebras generated by the spectral projections of $(\td x_i(m)(\om))^\bullet, i=1,\cdots,N$ for different $\om\in \Om_0$ are isomorphic. We denote by $\Ga_Q$ any von Neumann algebra in the isomorphic class with generators $(\td x_i(m)(\om))^\bullet, i=1,\cdots,N$. This algebra was introduced by Speicher \cite{Sp93} and studied in \cites{BS94,LP99}.  Note that $(\td x_i(m)(\om))^\bullet$ may be an unbounded operator, therefore may not be in $\Ga_Q$. But by our construction, it belongs to $\Ga_Q^\8:= \cap_{p<\8} L_p(\Ga_Q, \tau_\ux)$. In the following, whenever we say $(\td x_i(m)(\om))^\bullet,i=1,\cdots, N$ are generators of $\Ga_Q$, we always mean $(\td x_i(m)(\om))^\bullet \in \Ga_Q^\8$ and $\Ga_Q$ is generated by the spectral projections of $(\td x_i(m)(\om))^\bullet$'s. We call $\Ga_Q$ \textit{the} mixed $q$-Gaussian algebra, and $Q$ the structure matrix of $\Ga_Q$. Sometimes we also write $\tau_Q = \tau_\ux|_{\Ga_Q}$.

There is another way of constructing $\Ga_Q$. All the $x_i(k)$'s are actually in $L_\8(\Om; \ax_m)$ and thus $\td{x}_i(m)\in L_\8(\Om; \ax_m)$. Here the trace on $L_\8(\Om; \ax_m)$ is given by $\ez\otimes \tau_m$. By the same CLT argument as for Theorem \ref{clt}, we find the moment formula \eqref{etmom} in the limit, which is the same as \eqref{mome}. Therefore, as before, $(\td{x}_i(m))^\bullet, i=1,\cdots,N$ generates a von Neumann algebra, denoted by $\Ga_Q^a$. We call it the average model. Using the uniqueness results in \cite{Ju06}*{Section 6}, we have that $\Ga_Q^a$ is isomorphic to $\Ga_Q$. 
When we write $(\td{x}_i(m))^\bullet$, it can mean either an element in $\cap_{p<\8}L_p(\prod_{m,\ux}L_\8(\Om; \ax_m))$ or simply $(\td{x}_i(m)(\om))^\bullet$ for some $\om\in \Om_0$. It should be clear from context which one we are using. In fact, we may simply write $x_1,\cdots, x_N$ for the generators of $\Ga_Q$ if we are not concerned with the construction.

By considering different structure matrix $Q$, we can construct various examples as special cases of $\Ga_Q$. The same philosophy was used before by Lust-Piquard in \cite{LP99}.
\begin{exam}
  $\Ga_q(H)$, $q\in[-1,1]$ is fixed. If $q(i,j)=q$ all $1\le i, j\le N$, then we recover the classical $q$-Gaussian algebra $\Ga_q(H)$ where $H$ is a real Hilbert space with $\dim H=N$.
\end{exam}
\begin{exam}
  $\ast_{i=1}^n \Ga_{q_i}(H_i)$, $q_i\in[-1,1]$ is fixed for $i=1,\cdots,n$. Here $H_i$'s are real Hilbert spaces with $\dim H_i=d_i$. Let $N=d_1+\cdots+d_n$. Define $Q$ as follows. For $k=0,\cdots,n-1$ and $1\le \al,\bt\le d_{k+1}$, put
  \[
  q(\sum_{j=1}^k d_j+\al, \sum_{j=1}^k d_j+\bt)=q_{k+1} ;
   \]
   and $q(\al,\bt)=0$ otherwise. Then by the moment formula \eqref{mome}, we recover $\ast_{i=1}^n \Ga_{q_i}(H_i)$. The case $q_i=-1$ for all $i=1,\cdots, n$ was considered in \cite{JPP}.
\end{exam}
\begin{exam}
  $\overline\otimes_{i=1}^n (\Ga_{q_i}(H_i)\ast\Ga_{p_i}(K_i))$, $q_i,p_i\in[-1,1]$ are fixed. Here $H_i$'s and $K_i$'s are real Hilbert spaces with $\dim H_i=d_i, \dim K_i=d'_i$. Let $N=\sum_{i=1}^n d_i+d_i'$. For $k=0,\cdots, n-1$, define
  \[
  \td q\Big(\sum_{j=1}^{i} (d_j+d_j')+\al, \sum_{j=1}^i (d_j+d_j')+\bt\Big) =
  \begin{cases}
    q_i, \quad 1\le \al,\bt \le d_{i+1},\\
    p_i, \quad d_{i+1}+1\le \al,\bt\le d_{i+1}+d_{i+1}',\\
    0, \quad 1\le \al \le d_{i+1}<\bt\le d_{i+1}+d_{i+1}';
  \end{cases}
  \]
  and $\td q(\al,\bt)=1$ otherwise. Let $Q=(\td{q}_{\al,\bt})_{1\le \al,\bt\le N}$. By the moment formula \eqref{mome}, this model gives mixed products of $q$-Gaussian algebras. For example, consider the von Neumann algebra of the integer lattice $L(\zz^n)$. We may identify $L(\zz^n)$ with $\overline \otimes_{i=1}^n \Ga_0(\rz)$ via $\la(g_k)\mapsto x_k$, where $g_k$'s are the generators of $\zz^n$ and $x_k$'s are generators of $\overline \otimes_{i=1}^n \Ga_0(\rz)$. Alternatively, by extending $\la(g_k)\mapsto x_{2k-1}x_{2k}$, we may embed $L(\zz^n)$ into $\overline\otimes_{i=1}^n \Ga_{-1}(\rz)\ast \Ga_{-1}(\rz)$.
\end{exam}

\subsection{Wick word decomposition}
For our later development, we need an analogue of Wick word decomposition, i.e., rewriting $(\td x_{i_1}(m))^\bullet\cdots (\td x_{i_d}(m))^\bullet$ as a linear combination of Wick words (to be defined) so that we can analyze the Ornstein--Uhlenbeck semigroup easily. This procedure is conceptually clear with the help of Fock space representation because $(\td x_{i_1}(m))^\bullet\cdots (\td x_{i_d}(m))^\bullet$ belongs to $L_2(\Ga_Q)$ and $L_2(\Ga_Q)$ should coincide with the Fock space, which is spanned by Wick products; see \cite{BKS,BS94}. However, we do not know the explicit formula for the decomposition of $(\td x_{i_1}(m))^\bullet\cdots (\td x_{i_d}(m))^\bullet$ in terms of matrix models. Moreover, the known Fock space construction usually requires $\max_{i,j} |q_{ij}|<1$.

Our approach is again probabilistic. We refer the readers to Section \ref{s:nota} for the notation used in the following. By definition
\[
(\td x_{i_1}(m))^\bullet\cdots (\td x_{i_d} (m))^\bullet = \Big(\frac1{m^{d/2}} \sum_{\vec{k}\in [m]^d}x_{i_1}(k_1) \cdots x_{i_d}(k_d)\Big)^\bullet.
\]
Note that
\[
\sum_{\vec{k}\in [m]^d}x_{i_1}(k_1) \cdots x_{i_d}(k_d)=\sum_{\si\in P_{1,2}(d)}\sum_{\si(\vec{k})=\si} x_{i_1}(k_1) \cdots x_{i_d}(k_d) +\sum_{\si\in P_{r}(d)}\sum_{\si(\vec{k})=\si} x_{i_1}(k_1) \cdots x_{i_d}(k_d).
\]
We first record a simple algorithm which we will refer to later on.
\begin{prop}\label{algor}
  Let $\vec{i}\in [N]^d$, $\vec{k}\in [m]^d$, $\si(\vec{k})\le \si(\vec{i})$ and $\si(\vec{k})\in P_{1,2}(d)$. Then there is a specific algorithm to interchange $x_{i_\al}(k_\al)$'s in $x_{i_1}(k_1)\cdots x_{i_d}(k_d)$ such that
  \begin{enumerate}
    \item   $ x_{i_1}(k_1)\cdots x_{i_d}(k_d) =\eps(\vec{i},\vec{k}) x_{j_1}(l'_1)\cdots x_{j_s}(l'_s)\cdots x_{j_d}(l'_d)$,
   where $\eps(\vec{i},\vec{k})$ is a random sign resulting from interchanging $x_{i_\al}(k_\al)$'s which is given by
   \[
   \eps(\vec{i},\vec{k}) = \prod_{\{r,t\}\in I_{sp}(\si(\vec{k}))}\eps([i(e_r),k(e_r)], [i(e_t),k(e_t)]) \prod_{\{r,t\}\in I_{p}(\si(\vec{k}))}\eps([i(e_r),k(e_r)], [i(e_t),k(e_t)]);
   \]
  \item $(l_1',\cdots, l_s')$ are pairwise different and maintain their relative positions in $\vec{k}$, i.e., $(l_1',\cdots, l_s')$ is obtained from $\vec{k}$ by removing the $k_\al$'s which correspond to pair blocks;
  \item $l'_{s+1}=l'_{s+2}, \cdots, l'_{d-1}=l'_d$.
  \end{enumerate}
\end{prop}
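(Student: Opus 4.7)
The plan is to realize the desired reordering as a sorting permutation and to compute the resulting sign by tracking inversions. Let $\alpha_1<\cdots<\alpha_s$ be the singleton positions of $\si(\vec{k})$ and $\{e_r,z_r\}_{r=1}^{u}$ its pair blocks, ordered so that $e_1<\cdots<e_u$. Define the permutation $\pi$ of $[d]$ by $\pi(\alpha_j)=j$ for $1\le j\le s$, $\pi(e_r)=s+2r-1$, and $\pi(z_r)=s+2r$. A bubble-sort realization of $\pi$ rewrites $x_{i_1}(k_1)\cdots x_{i_d}(k_d)$ into a product $x_{j_1}(l_1')\cdots x_{j_d}(l_d')$. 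Claim (2) is immediate because the singletons are mapped in order onto the first $s$ slots, and claim (3) holds because each pair block $\{e_r,z_r\}$ lands on the consecutive slots $s+2r-1,\,s+2r$, so $l'_{s+2r-1}=k_{e_r}=k_{z_r}=l'_{s+2r}$.

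The next step is to verify that every adjacent transposition invoked by bubble sort acts on two distinct elements of $J_m$, so that the pure commutation $x_ax_b=\eps(a,b)x_bx_a$ applies without a $2\de_{a,b}$ correction. The hypothesis $\si(\vec{k})\le\si(\vec{i})$ forces $(i_{e_r},k_{e_r})=(i_{z_r},k_{z_r})$, so two positions carry the same element of $J_m$ exactly when they belong to a common pair block $\{e_r,z_r\}$; but $\pi(e_r)<\pi(z_r)$, so bubble sort never swaps those. Hence the total sign acquired by the sort is $\prod\eps([i(\alpha),k(\alpha)],[i(\beta),k(\beta)])$ over the inversions of $\pi$, i.e.\ over $\alpha<\beta$ with $\pi(\alpha)>\pi(\beta)$.

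It remains to identify this product with $\eps(\vec{i},\vec{k})$. I would enumerate inversions by the types of the two positions. Two singletons and two positions from a common pair block both give no inversion. For two distinct pair blocks $r<t$ the three configurations split as follows: the separated one $e_r<z_r<e_t<z_t$ yields no inversion; the crossing one $e_r<e_t<z_r<z_t$ yields the single inversion $(e_t,z_r)$, contributing $\eps([i(e_r),k(e_r)],[i(e_t),k(e_t)])$; the nested one $e_r<e_t<z_t<z_r$ yields two inversions whose signs both equal $\eps([i(e_r),k(e_r)],[i(e_t),k(e_t)])$ and hence square to $1$. For a singleton at $\alpha$ together with a pair $\{e_r,z_r\}$: $\alpha<e_r$ yields no inversion; $e_r<\alpha<z_r$ yields the single inversion $(e_r,\alpha)$ contributing $\eps([i(e_r),k(e_r)],[i(\alpha),k(\alpha)])$; and $z_r<\alpha$ yields two inversions that again cancel. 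The surviving contributions are precisely those indexed by $I_p(\si(\vec{k}))$ and $I_{sp}(\si(\vec{k}))$, which proves (1).

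The main obstacle is the bookkeeping in the nested and singleton-after-pair cases, where two distinct inversions each involve the same element of $J_m$ (via $[i(e_r),k(e_r)]=[i(z_r),k(z_r)]$) and their signs must be seen to cancel in pairs. Once this is observed, matching the surviving inversions to $I_p$ and $I_{sp}$ is a direct combinatorial check, and the inequality $\pi(e_r)<\pi(z_r)$ certifies that no step of the sort ever collides two copies of the same generator, so the relation $x_a^2=1$ is never triggered.
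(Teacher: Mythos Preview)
Your proof is correct and follows essentially the same route as the paper: both realize the same permutation $\pi$ (singletons in order to the first $s$ slots, then pair blocks ordered by $e_r$) via adjacent transpositions and track the resulting signs. Your inversion-by-cases analysis is a tidier verification of the sign formula than the narrative description in the paper, but the underlying idea---that only crossings in $I_p$ and $I_{sp}$ survive because nested and ``singleton-after-pair'' configurations contribute squared signs---is the same.
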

\begin{proof}
  Since $\si(\vec{k})\in P_{1,2}(d)$, for each $k_\al$ in $\vec{k}$, there is at most one $k_\bt$ in $\vec{k}$ equal to $k_\al$. We can find the first $k_\al$ corresponding to a singleton in $\si(\vec{k})$, and move $x_{i_\al}(k_\al)$ to the beginning of the word by interchanging it with $x_{i_\bt}(k_\bt)$'s which are to the left of $x_{i_\al}(k_\al)$. Rename this $x_{i_\al}(k_\al)$ to be $x_{j_1}(l'_1)$. This process produces a product of random signs of the form $\eps((i_\al, k_\al), (i_\bt, k_\bt))$ where $k_\al$ corresponds to a singleton and $k_\bt$ corresponds to a pair block in $\si(\vec{k})$. Then we repeat this procedure for the second $k_\al$ corresponding to a singleton in $\si(\vec{k})$, and rename it $x_{j_2}(l'_2)$. We continue until all the $x_{i_\al}(k_\al)$ corresponding to singletons in $\si(\vec{k})$ are in front of the rest $x_{i_\bt}(k_\bt)$'s corresponding to pair blocks in $\si(\vec{k})$. In this way, we get $x_{j_1}(l'_1)\cdots x_{j_s}(l'_s)$ and a product of random signs. Afterwards, we rename the variable $x_{i_\al}(k_\al)$ right adjacent to $x_{j_s}(l'_s)$ to be $x_{j_{s+1}}(l'_{s+1})$. Then move the other term with the same $k_\al$ to the right of $x_{j_{s+1}}(l'_{s+1})$, call it $x_{j_{s+2}}(l'_{s+2})$. This produces a product of $\eps((i_\al,k_\al),(i_\bt,k_\bt))$ where $k_\bt$ and $k_\al$ correspond to different pair blocks. Repeat this procedure for the next pair of $k_\al$'s. After finitely many steps, the algorithm will stop and we obtain $\eps(\vec{i},\vec{k}) x_{j_1}(l'_1)\cdots x_{j_s}(l'_s)\cdots x_{j_d}(l'_d)$ with the desired three properties.
\end{proof}

We write
\begin{equation}\label{klcor}
  (l'_1,\cdots, l'_d)=(k_{\pi(1)}, \cdots, k_{\pi(d)})
\end{equation}
where $\pi$ is a permutation determined by the algorithm. Similarly,
$
(j_1,\cdots,j_d)=(i_{\pi(1)}, \cdots, i_{\pi(d)}).
$
Let
\begin{equation}\label{llsd}
  l_1=l'_1, \cdots, l_s=l'_s, l_{s+1}=l'_{s+1}=l'_{s+2}, \cdots, l_{s+u}=l'_{d-1}=l'_d.
\end{equation}
Here $s$ and $u$ are the number of singletons and pair blocks of $\si(\vec{k})$, respectively.
\begin{lemma}\label{sinpar}
  Let $\si\in P_{1,2} (d)$. Then for all $2<p<\8$ and fixed $\om\in \Om$,
  \begin{align*}
  \lim_{m\to \8}\Big\|\frac1{m^{d/2}}&\sum_{\vec{k}\in[m]^d: \si(\vec{k})=\si} x_{i_1}(k_1) \cdots x_{i_d}(k_d)
   \\
  &-  \frac1{m^{d/2}}\sum_{\vec{k}\in[m]^d: \si(\vec{k})=\si} E_{\nx_{s}(\vec{k})}[x_{i_1}(k_1) \cdots x_{i_d}(k_d)]\Big\|_{L_p(\ax_m, \tau_m)}=0.
  \end{align*}
  Here $\nx_{s}(\vec{k})$ denotes the von Neumann algebra generated by all $x_{i_\al}(k_\al)$'s, where $k_\al$ corresponds to singleton blocks in $\si(\vec{k})$.
\end{lemma}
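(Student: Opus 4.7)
The plan is to apply Proposition \ref{algor} to each word, evaluate the conditional expectation using tensor independence in the spin matrix model, and bound what remains by Pisier's method. Fix $\vec{k}$ with $\si(\vec{k})=\si$. By Proposition \ref{algor} one has
\[
x_{i_1}(k_1)\cdots x_{i_d}(k_d)=\eps(\vec{i},\vec{k})\,X(\vec{k})\,Y(\vec{k}),
\]
where $X(\vec{k})=x_{j_1}(l_1)\cdots x_{j_s}(l_s)\in\nx_s(\vec{k})$ and $Y(\vec{k})=\prod_{a=1}^{u}x_{j_{s+2a-1}}(l_{s+a})x_{j_{s+2a}}(l_{s+a})$ lies in the subalgebra $\nx_p(\vec{k})$ generated by the pair-block variables. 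The singleton labels $\{l_1,\dots,l_s\}$ and the pair labels $\{l_{s+1},\dots,l_{s+u}\}$ are disjoint subsets of $[m]$, so the spin relations give the factorisation $\tau_m(ZY')=\tau_m(Z)\tau_m(Y')$ for every $Z\in\nx_s(\vec{k})$ and $Y'\in\nx_p(\vec{k})$ (easily checked on the reduced-word basis of $\ax_m$). By the trace-duality definition of $E_{\nx_s(\vec{k})}$, this forces $E_{\nx_s(\vec{k})}[Y(\vec{k})]=\tau_m(Y(\vec{k}))\cdot 1$, and hence $E_{\nx_s(\vec{k})}[X(\vec{k})Y(\vec{k})]=X(\vec{k})\,\tau_m(Y(\vec{k}))$.

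Each pair factor $x_{j_{s+2a-1}}(l_{s+a})x_{j_{s+2a}}(l_{s+a})$ equals $1$ when $j_{s+2a-1}=j_{s+2a}$ (a \emph{matched} pair) and otherwise equals a non-trivial two-letter reduced word on the label $l_{s+a}$; since distinct pair factors live on distinct labels, $\tau_m(Y(\vec{k}))$ is $1$ if every pair is matched and $0$ otherwise, a status that depends on $\vec{i}$ and $\si$ but not on $\vec{k}$. Thus either (i) every pair is matched, in which case $Y(\vec{k})\equiv 1$, the summand in the lemma vanishes identically, and the conclusion is immediate; or (ii) some pair is unmatched, in which case $E_{\nx_s(\vec{k})}[XY]=0$ uniformly in $\vec{k}$ and the difference collapses to $\frac{1}{m^{d/2}}\sum_{\si(\vec{k})=\si}x_{i_1}(k_1)\cdots x_{i_d}(k_d)$.

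In case (ii) I apply Proposition \ref{pbound} to the residual sum (absorbing $\eps(\vec{i},\vec{k})$ back into the original unreordered word, and replacing the condition $\si(\vec{k})=\si$ by a triangle-inequality sum over coarser partitions whose contributions are of strictly lower order in $m$). Each singleton position contributes $\|\sum_{l=1}^{m} x_{j_\alpha}(l)\|_{pd}\lesssim\sqrt{pdm}$, each matched pair contributes a scalar $\sum_l 1=m$, and---crucially---each unmatched pair contributes the Khinchin-type factor $\|\sum_{l=1}^{m} z_a(l)\|_{pd}\lesssim\sqrt{pdm}$ for the super-variable $z_a(l)=x_{j_{s+2a-1}}(l)x_{j_{s+2a}}(l)$, which holds because $\tau_m(z_a(l))=0$ and distinct $z_a(l)$'s are orthonormal reduced words in the spin model. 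Collecting these factors and dividing by $m^{d/2}$ yields a bound of order $m^{-u_1/2}$, where $u_1\ge 1$ is the number of unmatched pairs, so the residual tends to $0$ as $m\to\infty$. The main technical obstacle is exactly this last step: turning the vanishing $\tau_m(Y(\vec{k}))=0$ into a genuine $L_p$ decay requires applying Pisier's method at the super-variable level $z_a$ rather than position by position, because a position-level application of Proposition \ref{pbound} yields only the trivial $O(1)$ bound.
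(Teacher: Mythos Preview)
Your reduction via Proposition~\ref{algor} and the computation of $E_{\nx_s(\vec{k})}$ are correct, and the dichotomy into matched versus unmatched pairs is a useful observation: when $\si\le\si(\vec{i})$ the difference in the lemma really does vanish termwise, so only the case $\si\not\le\si(\vec{i})$ requires work. The gap is in this remaining case. Your ``super-variable'' application of Proposition~\ref{pbound} would require the two positions of each unmatched pair block to be \emph{adjacent} in the product, so that H\"older's inequality can group them into a single factor $z_a(l)=x_{j_{s+2a-1}}(l)x_{j_{s+2a}}(l)$. In the original word $x_{i_1}(k_1)\cdots x_{i_d}(k_d)$ they are generally separated by other letters, and if you reorder via Proposition~\ref{algor} to make them adjacent you pick up the sign $\eps(\vec{i},\vec{k})$, which is a product over crossings each of which involves two distinct coordinates of $\vec{l}$. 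This sign couples the summation variables and obstructs the Pisier/H\"older factorisation at the super-variable level; ``absorbing $\eps$ back into the unreordered word'' simply returns you to the non-adjacent situation. A position-by-position application of Proposition~\ref{pbound} gives only the trivial $O(1)$ bound, as you yourself note, and there is no evident one-shot way around this. (Your inclusion--exclusion step passing from $\si(\vec{k})=\si$ to $\si(\vec{k})\ge\si$ is also not obviously lower order without a separate argument, but this is secondary.)

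The paper circumvents exactly this obstruction by a martingale iteration rather than a single Pisier step. It first introduces random selectors to separate the distinct labels $l_1,\ldots,l_{s+u}$ into disjoint random sets $B_1,\ldots,B_{s+u}$, then peels off one pair label $l_{s+u}$ at a time: for fixed $l_{s+u}$ the partial sum $w_{\vec{i},\vec{l}}(l_{s+u})$ over the remaining labels is formed, and $\{w_{\vec{i},\vec{l}}(l_{s+u})-E_{\nx_{s+u-1}(\vec{k})}w_{\vec{i},\vec{l}}(l_{s+u})\}_{l_{s+u}\in B_{s+u}}$ is a martingale difference sequence. The noncommutative Burkholder--Gundy inequality then contributes a factor $\sqrt{|B_{s+u}|}\le\sqrt{m}$, while Proposition~\ref{pbound} applied position by position to $w_{\vec{i},\vec{l}}(l_{s+u})$ (with $l_{s+u}$ now fixed and attached to an adjacent term) gives $C_{p,\si}m^{s/2+u-1}$. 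Dividing by $m^{d/2}$ yields $O(m^{-1/2})$; iterating this over the $u$ pair blocks and applying the triangle inequality gives the lemma uniformly, without any matched/unmatched case split.
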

\begin{proof}
Let $s$ and $u$ denote the number of singletons and pair blocks of $\si$, respectively. Clearly, $s+2u=d$ and there are $$m_{(s+u)}:=m(m-1)\cdots (m-s-u+1)$$
vectors $\vec{k}\in[m]^d$ with $\si(\vec{k})=\si$. Let $\vec{l}$ be given in \eqref{llsd}. $\vec{l}$ is a vector of length $s+u$. Let $\de_1,\cdots, \de_m$ be i.i.d. random selectors uniformly distributed on $\{1,2,\cdots,s+u\}$ which are independent from $L_\8(\Om;\ax_m)$. If all $l_\al$'s are pairwise different, then by independence,
\[
\ez_\de(1_{[\de_{l_1}=1]}1_{[\de_{l_2}=2]}\cdots 1_{[\de_{l_{s+u}}=s+u]}) =(s+u)^{-s-u},
\]
where $\ez_\de$ is the expectation with respect to $\de_{l_q}$'s. Define the random sets for $q=1,\cdots, s+u$,
\[
B_q=\{l_q\in [m]: \de_{l_q}=q\}.
\]
Then for each instance of $\de_{l_q}$'s, $B_q$'s are pairwise disjoint and their union equals $[m]$. By \eqref{klcor}, there is a 1-1 correspondence between $\vec{k}$ and $\vec{l}$. We may rewrite
\begin{align*}
  &\sum_{\vec{k}\in[m]^d: \si(\vec{k})=\si} x_{i_1}(k_1) \cdots x_{i_d}(k_d) =\sum_{\vec{l}\in[m]^{s+u}: \si(\vec{l})\in P_1(s+u)} x_{i_1}(k_1) \cdots x_{i_d}(k_d)\\
  &=(s+u)^{s+u}\sum_{\vec{l}: \si(\vec{l})\in P_1(s+u)}\ez_\de[1_{[\de_{l_1}=1]}1_{[\de_{l_2}=2]}\cdots 1_{[\de_{l_{s+u}}=s+u]} x_{i_1}(k_1) \cdots x_{i_d}(k_d)]\\
  &=(s+u)^{s+u}\ez_\de\big(\sum_{l_{s+u}\in B_{s+u}} \cdots \sum_{l_{1}\in B_{1}}x_{i_1}(k_1) \cdots x_{i_d}(k_d)\big)
\end{align*}
where $\si(\vec{l})\in P_1(s+u)$ amounts to saying that all $l_q$'s are pairwise different. 
For $q=s,s+1,\cdots, s+u$, let $\nx_{q}(\vec{k})$ be the von Neumann algebra generated by
\[
\{x_{j_\al}(l'_\al):  \al\le s+2(q-s)\}
\]
Recall that $l_{s+u}=k_{\pi(d-1)}= k_{\pi(d)}$. Let
\[
w_{\vec{i},\vec{l}}(l_{s+u})= \sum_{l_{s+u-1}\in B_{s+u-1}}\cdots \sum_{l_1\in B_{1}} x_{i_1}(k_1) \cdots x_{i_d}(k_d).
\]
Here we only fix $l_{s+u}$ and sum over all the other indices. It is straightforward to check that
$$\{w_{\vec{i},\vec{l}}(l_{s+u}) -E_{\nx_{s+u-1}(\vec{k})} (w_{\vec{i},\vec{l}}(l_{s+u}))\}_{l_{s+u}\in B_{s+u}}$$
is a sequence of martingale differences. 
Using the noncommutative Burkholder--Gundy inequality \cite{PX97}, we have
\begin{align*}
&\Big\|\sum_{l_{s+u}\in B_{s+u}} \cdots \sum_{l_{1}\in B_{1}} \big( x_{i_1}(k_1) \cdots x_{i_d}(k_d)-E_{\nx_{s+u-1}(\vec{k})}[x_{i_1}(k_1) \cdots x_{i_d}(k_d)]\big)\Big\|_p\\
&=\Big\|\sum_{l_{s+u}\in B_{s+u}} \big(w_{\vec{i},\vec{l}}(l_{s+u}) -E_{\nx_{s+u-1}(\vec{k})} (w_{\vec{i},\vec{l}}(l_{s+u})) \big) \Big\|_p \\
  &\le C_p\Big\|\Big(\sum_{l_{s+u}\in B_{s+u}} |w_{\vec{i},\vec{l}}(l_{s+u}) -E_{\nx_{s+u-1}(\vec{k})} (w_{\vec{i},\vec{l}}(l_{s+u})) |^2 \\
  &\qquad + |\big(w_{\vec{i},\vec{l}}(l_{s+u}) -E_{\nx_{s+u-1}(\vec{k})} (w_{\vec{i},\vec{l}}(l_{s+u})) \big)^*|^2\Big)^{1/2}\Big\|_p\\
  &=: \Psi.
\end{align*}
By the triangle inequality, we have
\begin{align*}
  \Psi&\le C_p \sqrt{|B_{s+u}|} \sup_{l_{s+u}\in B_{s+u}} \|w_{\vec{i},\vec{l}}(l_{s+u}) -E_{\nx_{s+u-1}(\vec{k})} (w_{\vec{i},\vec{l}}(l_{s+u}))\|_p\\
  &\le 2 C_p \sqrt{|B_{s+u}|} \sup_{l_{s+u}\in B_{s+u}}\|w_{\vec{i},\vec{l}}(l_{s+u})\|_p.
\end{align*}
Recall that $k_\al=l_{\pi^{-1}(\al)}$ if $\al$ is a singleton of $\si$. In this case, $l_\bt\in B_\bt$ if and only if $k_\al\in B_{\pi^{-1}(\al)}$ where $\pi(\bt)=\al$. Replacing $p$ by a larger even integer if necessary, arguing as for Proposition \ref{pbound}, or simply adding zeros to apply Proposition \ref{pbound}, we find
\[
\|w_{\vec{i},\vec{l}}(l_{s+u})\|_p\le (\frac{3\pi}{4})^{2u}\prod_{\al\in\td\si_{sing}}\Big \| \sum_{k_\al\in B_{\pi^{-1}(\al)}} x_{i_\al}(k_\al)\Big\|_{pd}\prod_{\al\in \td\si_{ns}} m^{1/2}.
\]
Here $\td{\si}$ is obtained from $\si$ by erasing one pair block containing $\pi(d)$ so that $\#\td\si_{ns}=2(u-1)$. We mention one subtlety  here in applying Proposition \ref{pbound}. Since $l_{s+u}$ is fixed here, the term $x_{i_\al}(l_{s+u})$ is regarded to ``attach'' to its adjacent term. For instance, $x_{i_{j'}}(k_{j'})x_{i_{\al}}(l_{s+u})x_{i_{j}}(k_j)$ is regarded as a product of two terms, i.e., $[x_{i_{j'}}(k_{j'}) x_{i_{\al}}(l_{s+u})] x_{i_{j}}(k_j)$ or $x_{i_{j'}}(k_{j'}) [x_{i_{\al}}(l_{s+u}) x_{i_{j}}(k_j)]$. Using the noncommutative Khintchine inequality \cites{LP86,LPP} or Burkholder--Gundy inequality \cite{PX97}, we have for $\al\in\td\si_{sing}$,
\begin{align*}
  \Big \| \sum_{k_\al\in B_{\pi^{-1}(\al)}} x_{i_\al}(k_\al) \Big\|_{pd}&\le C_{pd}\max\Big\{\Big\| \sum_{k_\al}x_{i_\al}(k_\al)^*x_{i_\al}(k_\al)\Big\|_{pd/2}^{1/2},\Big\| \sum_{k_\al}x_{i_\al}(k_\al)x_{i_\al}(k_\al)^*\Big\|_{pd/2}^{1/2}\Big\}\\
&\le C_{pd} m^{1/2}.
\end{align*}
It follows that $\|w_{\vec{i},\vec{l}}(l_{s+u})\|_p \le C_{p, \si} m^{s/2+u-1}$ and thus $\Psi\le C_{p, \si} m^{s/2+u-1/2}$. We have shown that
\begin{equation}
  \frac{1}{m^{d/2}}\Big\| \sum_{l_{s+u}\in B_{s+u}} \cdots \sum_{l_{1}\in B_{1}} \big(x_{i_1}(k_1) \cdots x_{i_d}(k_d)-E_{\nx_{s+u-1}(\vec{k})}[x_{i_1}(k_1) \cdots x_{i_d}(k_d)]\big) \Big\|_p \le \frac{C_{p, \si}}{m^{1/2}}.
\end{equation}
Repeating the argument $u-1$ times by replacing $u$ with $u-1, u-2,\cdots, 1$, we find
\begin{align*}
  &\frac{1}{m^{q-s/2}}\Big\| \sum_{l_{q}\in B_{q}} \cdots \sum_{l_{1}\in B_{1}}\big( E_{\nx_{q}(\vec{k})} [x_{i_1}(k_1) \cdots x_{i_d}(k_d)] -E_{\nx_{q-1}(\vec{k})}[x_{i_1}(k_1) \cdots x_{i_d}(k_d)]\big) \Big\|_p\\
  & \le \frac{C_{p, \si}}{m^{1/2}},
\end{align*}
for $q=s+u-1,\cdots,s+1$. In this iteration argument, we use the same ``attaching'' procedure as described above in order to apply Proposition \ref{pbound}. By the triangle inequality, we have
\[
\frac{1}{m^{d/2}}\Big\| \sum_{l_{s+u}\in B_{s+u}} \cdots \sum_{l_{1}\in B_{1}}\big(x_{i_1}(k_1) \cdots x_{i_d}(k_d)-E_{\nx_{s}(\vec{k})}[x_{i_1}(k_1) \cdots x_{i_d}(k_d)]\big) \Big\|_p \le \frac{C_{p, \si}}{m^{1/2}}.
\]
Hence, by Jensen's inequality,
\begin{align*}
  &\Big\|\frac1{m^{d/2}}\sum_{\vec{k}: \si(\vec{k})=\si} x_{i_1}(k_1) \cdots x_{i_d}(k_d)
   -  \frac1{m^{d/2}}\sum_{\vec{k}: \si(\vec{k})=\si} E_{\nx_{s}(\vec{k})}[x_{i_1}(k_1) \cdots x_{i_d}(k_d)]\Big\|_{L_p(\ax_m, \tau_m)}\\
   &\le \frac{(s+u)^{s+u}}{m^{d/2}}\ez_\de\Big\| \sum_{l_{s+u}\in B_{s+u}} \cdots \sum_{l_{1}\in B_{1}} \big( x_{i_1}(k_1) \cdots x_{i_d}(k_d)-E_{\nx_{s}(\vec{k})}[x_{i_1}(k_1) \cdots x_{i_d}(k_d)]\big)\Big\|_p\\
   &\le \frac{C_{p, \si}}{m^{1/2}}.
\end{align*}
In the last inequality, the upper bound holds for every instance of $\de$ and thus holds for the average. The proof is complete by sending $m\to \8$.
\end{proof}
\begin{lemma}\label{remai}
  Let $\si\in P_{r} (d)$. Then for all $p<\8$ and fixed $\om\in \Om$,
  \begin{align*}
  \lim_{m\to \8}\Big\|\frac1{m^{d/2}}\sum_{\vec{k}: \si(\vec{k})=\si} x_{i_1}(k_1) \cdots x_{i_d}(k_d)
   \Big\|_{L_p(\ax_m, \tau_m)}=0,
  \end{align*}
\end{lemma}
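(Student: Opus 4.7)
My plan is to follow the iterated random-selector and noncommutative Burkholder--Gundy scheme from the proof of Lemma \ref{sinpar}, which goes through with non-singleton blocks of any size $\ge 2$ playing the role of pair blocks, and then to separately verify that the surviving conditional-expectation ``main term'' also tends to zero when $\si\in P_r(d)$.

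Setting $n:=\#\si$, $s:=\#\si_{sing}$, and $u:=n-s$, I introduce i.i.d.\ uniform selectors $\de_1,\ldots,\de_m\in\{1,\ldots,n\}$, the random partition $B_1,\ldots,B_n$ of $[m]$ indexed by the blocks of $\si$, and the filtration $\nx_s(\vec{k})\subset\nx_{s+1}(\vec{k})\subset\cdots\subset\nx_{s+u}(\vec{k})$ built by successively adjoining one non-singleton block at a time. Iterating the noncommutative Burkholder--Gundy inequality once per non-singleton block and bounding the sup-norms of the resulting martingale increments via Proposition \ref{pbound} together with the ``attaching'' convention of Lemma \ref{sinpar} yields
\[
\Big\|\frac{1}{m^{d/2}}\sum_{\si(\vec{k})=\si} x_{i_1}(k_1)\cdots x_{i_d}(k_d)-\frac{1}{m^{d/2}}\sum_{\si(\vec{k})=\si} E_{\nx_s(\vec{k})}[x_{i_1}(k_1)\cdots x_{i_d}(k_d)]\Big\|_p\le\frac{C_{p,\si}}{m^{1/2}}.
\]
A block of size $|V|\ge 2$ produces a gain of $m^{(1-|V|)/2}\le m^{-1/2}$ per step, so the iteration is insensitive to whether the block is a pair or a larger block.

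To control the main term, reordering by commutation relations produces a scalar sign $\eps(\vec{i},\vec{k})$ and a factorization of $x_{i_1}(k_1)\cdots x_{i_d}(k_d)$ into the singleton product times $\prod_V\chi_V(k_V)$, where $\chi_V(k_V):=\prod_{\al\in V}x_{i_\al}(k_V)$ (suitably reordered) lies in the spin subalgebra generated by $\{x_j(k_V):j\in[N]\}$. Since distinct $k_V$-subalgebras have disjoint generator sets and trace-factorize over $\nx_s(\vec{k})$,
\[
E_{\nx_s(\vec{k})}[x_{i_1}(k_1)\cdots x_{i_d}(k_d)]=\eps(\vec{i},\vec{k})\Big(\prod_{\al\in\si_{sing}}x_{i_\al}(k_\al)\Big)\prod_V\tau_m(\chi_V(k_V)).
\]
Since $\si\in P_r(d)$ has some block $V_0$ with $|V_0|\ge 3$, I split into two cases. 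If $|V_0|$ is odd, then reducing $\chi_{V_0}(k_{V_0})$ via $x_j(k_{V_0})^2=1$ gives a reduced word of odd length with trace zero, so the main term vanishes identically. Otherwise every non-singleton block has even size with at least one of size $\ge 4$, so $d=s+\sum_V|V|\ge s+2u+2$; combining the trivial bound $|\sum_{\vec{k}_b\text{ distinct}}\eps(\vec{i},\vec{k})\prod_V\tau_m(\chi_V(k_V))|\le m^u$ with a noncommutative Khintchine bound for the sum $\sum_{\vec{k}_s\text{ distinct}}(\text{scalar})\cdot\prod_{\al\in\si_{sing}}x_{i_\al}(k_\al)$ in the spin matrix model yields $\|\text{main term}\|_p\le C_{p,\si}m^{u+s/2}$, giving a normalized contribution of $O(m^{u+s/2-d/2})\le O(m^{-1})$.

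Adding the two estimates and letting $m\to\infty$ completes the proof. The main obstacle is the even-size-block case: $\tau_m(\chi_V(k_V))$ need not vanish, and one must combine the split ``trivial bound for the block-index sum, Khintchine bound for the singleton-index sum'' estimate with the combinatorial inequality $s+2u+2\le d$ (forced by having at least one non-singleton block of even size $\ge 4$), which is exactly what is needed to beat the normalizing factor $m^{d/2}$.
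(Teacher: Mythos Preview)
Your argument is correct, but it takes a more elaborate route than the paper's. The paper does not iterate Burkholder--Gundy all the way down to $\nx_s(\vec{k})$; instead it performs a \emph{single} Burkholder--Gundy step on the index $l_{s+u+1}$ attached to a block of size $\ge 3$, which already gains an extra $m^{-1/2}$ compared to a pair block, and then bounds the surviving conditional-expectation term $\sum E_{\nx_{s+u}(\vec{k})}[\cdots]$ by a crude triangle inequality over $l_{s+u+1}$ followed by Proposition~\ref{pbound} on the remaining (product-form) sum. That scheme never needs the parity case analysis or any order-$s$ chaos estimate. Your route also works: the factorization $E_{\nx_s(\vec{k})}[x]=\eps(\vec{i},\vec{k})\,(\text{singletons})\prod_V\tau_m(\chi_V(k_V))$ is valid in the spin model, and the odd-block case is indeed identically zero. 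In the all-even case, however, the step you label ``noncommutative Khintchine'' is really an $L_p$--$L_2$ estimate for length-$s$ reduced words with \emph{arbitrary} scalar coefficients $c(\vec{k}_s)$; Proposition~\ref{pbound} and the first-order Khintchine inequality do not directly cover this. The cleanest justification is Biane's hypercontractivity (Theorem~\ref{hypem}), which gives $\|f\|_p\le (p-1)^{s/2}\|f\|_2$ on the $s$-th chaos of $\ax_m$ and yields exactly your bound $C_{p,\si}\,m^{u+s/2}$. With that in place your counting $d\ge s+2u+2$ finishes the job. So both proofs are sound; the paper's is shorter because it exploits the extra decay from the large block at the very first step and thereby sidesteps the structure of $E_{\nx_s(\vec{k})}$ altogether.
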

\begin{proof}
We follow the same argument as for Lemma \ref{sinpar} and only indicate the differences. For $\si\in P_r(d)$, there is at lease one block with more than two elements. Without loss of generality, assume there is only one block in $\si$ with more than two elements. Suppose this block has, say, three elements. We list the running indices $\vec{k}$ in the sum as $\{l_1,\cdots,l_s,l_{s+1},\cdots,l_{s+u},l_{s+u+1}\}$, where there are $s$ singletons, $u$ pairs and one block with three elements in $\si$. Using the random selectors, it suffices to show that
\[
\frac1{m^{d/2}} \Big\|\sum_{l_{s+u+1}\in B_{s+u+1}}\cdots \sum_{l_1\in B_1} x_{i_1}(k_1) \cdots x_{i_d}(k_d)\Big\|_p \to 0,
\]
as $m\to\8$, where $B_1,\cdots, B_{s+u+1}$ are disjoint random sets with union $[m]$. Denote by $\nx_{s+u}(\vec{k})$ the von Neumann algebra generated by $x_{i_{\pi(\al)}}(l_\al')$ for all $\al\le s+2u$, where $\vec{l'}$ is a permutation of $\vec{k}$ so that $l_1=l'_1,\cdots, l_s=l'_s,l_{s+1}=l'_{s+1}=l'_{s+2}$, etc. Then using the noncommutative Burkholder--Gundy inequality, we can show that
\begin{align*}
  &\frac{1}{m^{d/2}}\Big\| \sum_{l_{s+u+1}\in B_{s+u+1}} \cdots \sum_{l_{1}\in B_{1}} \big( x_{i_1}(k_1) \cdots x_{i_d}(k_d)-E_{\nx_{s+u}(\vec{k})}[x_{i_1}(k_1) \cdots x_{i_d}(k_d)]\big) \Big\|_p \\
  &\le \frac{C_{p, \si} m^{s/2+u+1/2}}{m^{s/2+u+3/2}}\to 0,
\end{align*}
as $m\to \8$. It remains to show
\[
\frac{1}{m^{d/2}}\Big\| \big(\sum_{l_{s+u+1}\in B_{s+u+1}} \cdots \sum_{l_{1}\in B_{1}} E_{\nx_{s+u}(\vec{k})}[x_{i_1}(k_1) \cdots x_{i_d}(k_d)]\big) \Big\|_p \to 0.
\]
Note that
\begin{align*}
  &\frac{1}{m^{d/2}}\Big\| \big(\sum_{l_{s+u+1}\in B_{s+u+1}} \cdots \sum_{l_{1}\in B_{1}} E_{\nx_{s+u}(\vec{k})}[x_{i_1}(k_1) \cdots x_{i_d}(k_d)]\big) \Big\|_p \\
  &\le\frac1{m}\sum_{l_{s+u+1}\in B_{s+u+1}}\frac1{m^{(d-2)/2}}\Big\|  \sum_{l_{s+u}\in B_{s+u}} \cdots \sum_{l_{1}\in B_{1}} x_{i_1}(k_1) \cdots x_{i_d}(k_d) \Big\|_p.
\end{align*}
Now apply Proposition \ref{pbound} with the same ``attaching'' procedure as above. This yields
\[
\Big\|  \sum_{l_{s+u}\in B_{s+u}} \cdots \sum_{l_{1}\in B_{1}} x_{i_1}(k_1) \cdots x_{i_d}(k_d)\Big\|_p\le C_{p,\si} m^{s/2+u},
\]
which gives a decay factor and completes the proof.
\end{proof}
\begin{theorem}\label{wicdec}
Let $(\td x_j(m))^\bullet\in \cap_{p<\8} L_p(\prod_{m, \ux}L_\8 (\Om; \ax_m))$ for $j=1,\cdots,d$. Then
\[
(\td x_{i_1}(m))^\bullet\cdots (\td x_{i_d} (m))^\bullet =\sum_{\substack{\si\in P_{1,2}(d)\\\si\le \si(\vec{i}) }}w_\si(\vec{i})
\]
where the equality holds for all $\om\in \Om$ and
\begin{equation}\label{wicar}
  w_\si(\vec{i}) = \Big( \frac1{m^{d/2}}\sum_{\substack{\vec{k}\in[m]^d: \si(\vec{k})= \si}} E_{\nx_{s}(\vec{k})}[x_{i_1}(k_1) \cdots x_{i_d}(k_d)]\Big)^{\bullet}.
\end{equation}
\end{theorem}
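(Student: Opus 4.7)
The plan is to combine Lemmas \ref{sinpar} and \ref{remai} with a direct computation of the conditional expectation via the reordering algorithm of Proposition \ref{algor}.

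First I would expand
\[
(\td x_{i_1}(m))^\bullet \cdots (\td x_{i_d}(m))^\bullet = \Big(\frac{1}{m^{d/2}} \sum_{\vec{k} \in [m]^d} x_{i_1}(k_1) \cdots x_{i_d}(k_d)\Big)^\bullet
\]
and split the multi-index sum according to the partition $\si(\vec{k})$, using the decomposition $P(d) = P_{1,2}(d) \sqcup P_r(d)$. Lemma \ref{remai} shows that the contribution from $\si(\vec{k}) \in P_r(d)$ has vanishing $L_p$ norm for every $p<\8$, so this part is zero in the ultraproduct.

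Second, for each fixed $\si \in P_{1,2}(d)$, Lemma \ref{sinpar} lets me replace the inner sum $\sum_{\si(\vec{k}) = \si} x_{i_1}(k_1) \cdots x_{i_d}(k_d)$ by $\sum_{\si(\vec{k})=\si} E_{\nx_s(\vec{k})}[x_{i_1}(k_1) \cdots x_{i_d}(k_d)]$ at the cost of a term of vanishing $L_p$ norm. This produces $w_\si(\vec{i})$ for each $\si \in P_{1,2}(d)$.

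Third, I must show that if $\si \in P_{1,2}(d)$ satisfies $\si \not\le \si(\vec{i})$ then every summand $E_{\nx_s(\vec{k})}[x_{i_1}(k_1) \cdots x_{i_d}(k_d)]$ vanishes, so only $\si \le \si(\vec{i})$ survive. By Proposition \ref{algor}, the product equals, up to a sign,
\[
x_{j_1}(l_1') \cdots x_{j_s}(l_s') \prod_{q=1}^u x_{j_{s+2q-1}}(l_{s+2q-1}') x_{j_{s+2q}}(l_{s+2q}'),
\]
with $l_{s+2q-1}' = l_{s+2q}'$ for each $q$. When $\si \le \si(\vec{i})$, every pair block of $\si$ is constant on $\vec{i}$, so $j_{s+2q-1} = j_{s+2q}$, each pair collapses to $1$ via $x_j(l)^2 = 1$, the product lies in $\nx_s(\vec{k})$, and the conditional expectation fixes it, yielding exactly $w_\si(\vec{i})$. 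When $\si \not\le \si(\vec{i})$, some pair has $j_{s+2q-1} \neq j_{s+2q}$; since the $k$-labels of pair blocks are disjoint from those of singleton blocks, the normal form of the product is a reduced spin word $\pm x_B$ whose support $B$ contains at least one generator with a pair $k$-label, hence outside $\nx_s(\vec{k})$. By orthogonality of reduced words under the trace inner product, $x_B \perp \nx_s(\vec{k})$ and so $E_{\nx_s(\vec{k})}(x_B) = 0$.

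Concatenating these three observations gives the stated decomposition. The main obstacle lies in the bookkeeping of step three: tracking the $j$/$l$ pairing structure produced by the algorithm and verifying that non-matching pair blocks always leave a residual generator outside the singleton subalgebra, so that the conditional expectation really does annihilate them. Once this is clear, the theorem follows by a direct appeal to the two preceding lemmas.
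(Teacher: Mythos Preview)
Your proposal is correct and follows essentially the same route as the paper: apply Lemmas \ref{sinpar} and \ref{remai} to reduce to the $P_{1,2}(d)$ contributions, then use the reordering of Proposition \ref{algor} to see that the conditional expectation vanishes unless each pair block of $\si$ lies inside a block of $\si(\vec{i})$. The paper records this last step slightly more tersely, writing $E_{\nx_s(\vec{k})}[x_{i_1}(k_1)\cdots x_{i_d}(k_d)]=\eps(\vec{i},\vec{k})\,x_{j_1}(l'_1)\cdots x_{j_s}(l'_s)\,\de_{j_{s+1},j_{s+2}}\cdots\de_{j_{d-1},j_d}$ and reading off $\si\le\si(\vec{i})$ from the Kronecker deltas, but your reduced-word orthogonality justification amounts to the same computation.
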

\begin{proof}
By Lemma \ref{sinpar} and Lemma \ref{remai}, we have
\[
(\td x_{i_1}(m))^\bullet\cdots (\td x_{i_d} (m))^\bullet = \sum_{\si\in P_{1,2}(d)}\Big( \frac1{m^{d/2}}\sum_{\substack{\vec{k}\in[m]^d: \si(\vec{k})= \si}} E_{\nx_{s}(\vec{k})}[x_{i_1}(k_1) \cdots x_{i_d}(k_d)] \Big)^\bullet.
\]
By Proposition \ref{algor}, we write $(j_1,\cdots, j_d)=(i_{\pi(1)}, \cdots, i_{\pi(d)})$, $(l'_1,\cdots,l'_{d})=(k_{\pi(1)},\cdots, k_{\pi(d)})$. It follows that
\[
E_{\nx_{s}(\vec{k})}[x_{i_1}(k_1) \cdots x_{i_d}(k_d)]=\eps(\vec{i},\vec{k})x_{j_1}(l'_1)\cdots x_{j_s}(l'_s)\de_{j_{s+1},j_{s+2}}\cdots \de_{j_{d-1},j_d}.
\]
Note that $E_{\nx_{s}(\vec{k})}[x_{i_1}(k_1) \cdots x_{i_d}(k_d)]$ is nonzero only if $j_{s+1}=j_{s+2},\cdots, j_{d-1}=j_d$. Since $\si(\vec{k})=\si$, we have $\si\le \si(\vec{i})$.
\end{proof}
If $\si\le\si(\vec{i})$, we call $w_\si(\vec{i})$ defined in \eqref{wicar} the arbitrary Wick words. By Theorem \ref{wicdec},
$$
L_2(\Ga_Q)\subset L_2\dash\Span\{w_\si(\vec{i}):\vec{i}\in [N]^d, \si\in P_{1,2}(d), d\in \nz\}.
$$
Here and in what follows $L_p\dash\Span W$ means the $L_p(\tau_\ux)$ closure of linear combinations of elements in $W$. We want to identify $L_2(\Ga_Q)$ with the span of fewer Wick words. Let $\vec{i}\in [N]^s$ for $s\in \nz$. We define the special Wick words
\begin{equation}\label{wicsp}
  w(\vec{i}) = \Big(\frac1{m^{s/2}} \sum_{\vec{k}\in[m]^s: \si(\vec{k})\in P_1(s)} x_{i_1}(k_1)\cdots x_{i_s}(k_s)\Big)^{\bullet}.
\end{equation}
Let $\vec{i'}\in [N]^{s'}$. In order to understand the inner product of $w(\vec{i})$ and $w(\vec{i'})$, we first introduce some notions. Let $\{2\cdot 1, 2\cdot 2,\cdots, 2\cdot s\}$ be a multiset, each element with multiplicity 2. One can regard it as a set of cardinality $2s$ given by $[2s]=\{1,2,\cdots, s, \td1, \td2,\cdots, \td{s}\}$. Let $\si^b$ be a partition of the set $[2s]$. We call it a bipartite pair partition of $[2s]$ if
\[
\si^b =\{\{e_k,z_k\}: e_k=1,2,\cdots, s, z_k=\td1, \td2,\cdots, \td{s}\}.
\]
The set of all bipartite pair partitions is denoted by $P^b_2(2s)$. Let $\vec{i},\vec{i'}\in [m]^s$, where $\vec{i'}$ is understood as a map $\vec{i'}: \{\td1, \td2,\cdots, \td{s}\}\to [m]$. Define the concatenation operation
\begin{equation}\label{cate}
  \vec{i}\sqcup \vec{i'}=(i_1,\cdots, i_s,i'_{\td1},\cdots,i'_{\td{s}}).
\end{equation}
We denote by $\si(\vec{i}\sqcup \vec{i'})$ the partition induced by $\vec{i}$ and $\vec{i'}$ on the multiset $[2s]$. For example, $\{k,l,\td{k}\}$ are in the same block of $\si(\vec{i}\sqcup \vec{i'})$ if $i_k=i_l=i'_{\td{k}}$. Given $\si^b \in P_2^b(2s)$, define the set of bipartite crossings by
\[
I^b(\si^b)=\{\{k,l\}: 1\le k,l\le s, e_k<e_l, z_l>z_k\}.
\]
Recall that $\lge w(\vec{i}), w(\vec{i'})\rge = \tau_\ux[w(\vec{i'})^* w(\vec{i})]$.
\begin{prop}\label{swinn}
  Let $w(\vec{i})$ and $w(\vec{i'})$ be special Wick words. Then there exists a full probability set $\Om_0\subset \Om$ such that for all $\om\in \Om_0$,
  \begin{equation*}
    \lge w(\vec{i}), w(\vec{i'})\rge =
    \begin{cases}
      \sum_{\substack{\si^b\in P_2^b(2s) \\ \si^b\le \si(\vec{i}\sqcup \vec{i'})}}\prod_{\{r,t\}\in I^b(\si^b)}q(i(e_r),i(e_t)) , \text{ if } \{i_1,\cdots, i_s\}=\{i'_1,\cdots, i'_{s'}\}, \\
      0, \text{ otherwise}.
    \end{cases}
  \end{equation*}
  where $\{i_1,\cdots, i_s\}=\{i'_1,\cdots, i'_{s'}\}$ means that $\vec{i}$ and $\vec{i'}$ are equal as multisets, i.e., both the elements and their multiplicities are the same.
\end{prop}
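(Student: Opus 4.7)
The plan is to expand $\langle w(\vec{i}), w(\vec{i'})\rangle = \tau_\ux(w(\vec{i'})^* w(\vec{i}))$ as a normalized multi-index sum and then exploit the vanishing property of the spin matrix trace $\tau_m$. By definition,
\[
\tau_\ux(w(\vec{i'})^* w(\vec{i})) = \lim_{m,\ux}\frac{1}{m^{(s+s')/2}}\sum_{\substack{\vec{k}\in[m]^s,\,\vec{k'}\in[m]^{s'}\\ \si(\vec{k})\in P_1(s),\,\si(\vec{k'})\in P_1(s')}}\tau_m\bigl(x_{i'_{s'}}(k'_{s'})\cdots x_{i'_1}(k'_1)\,x_{i_1}(k_1)\cdots x_{i_s}(k_s)\bigr).
\]
Since $\tau_m(x_B)=\de_{B,\emptyset}$ on reduced words, only those $(\vec{k},\vec{k'})$ contribute for which the above product reduces to the identity in $\ax_m$. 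As every entry in $\vec{k}$ is distinct and every entry in $\vec{k'}$ is distinct, each generator coming from the right half must cancel with exactly one generator from the left half. This forces $s=s'$ and determines a bipartite pair partition $\si^b\in P_2^b(2s)$ of $[2s]$ matching positions $[s]$ with positions $\{\td 1,\dots,\td s\}$; moreover, each paired block $\{e_k,z_k\}$ requires $i(e_k)=i'(z_k)$, i.e.\ $\si^b\le \si(\vec{i}\sqcup \vec{i'})$. If the multisets $\{i_1,\dots,i_s\}$ and $\{i'_1,\dots,i'_{s'}\}$ differ, no such $\si^b$ exists and the inner product vanishes, giving the second case of the formula.

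Next, I would apply an interchange algorithm modeled on Proposition \ref{algor}, but for the bipartite setting, to move each paired generator adjacent to its partner. The generator $x_{i_a}(k_a)$ on the right must pass through generators lying between it and its partner $x_{i'_b}(k'_b)$ on the left; by the commutation relations, this produces a random sign which, after bookkeeping, equals
\[
\eps^b(\vec{i},\vec{i'},\vec{k},\vec{k'}) \;=\; \prod_{\{r,t\}\in I^b(\si^b)} \eps\bigl((i(e_r),k(e_r)),(i(e_t),k(e_t))\bigr),
\]
the product ranging precisely over the bipartite crossings of $\si^b$. After reduction the word becomes $\eps^b\cdot 1$, so its trace is exactly $\eps^b$.

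Finally, the limit is taken by grouping the sum according to $\si^b$: for each fixed $\si^b\le \si(\vec{i}\sqcup\vec{i'})$, the number of admissible $(\vec{k},\vec{k'})$ is $m(m-1)\cdots(m-s+1)$, which after normalization by $m^s$ tends to $1$. The remaining task is to show that the average of the random signs $\eps^b(\vec{i},\vec{i'},\vec{k},\vec{k'})$ over these $(\vec{k},\vec{k'})$ converges almost surely to $\prod_{\{r,t\}\in I^b(\si^b)} q(i(e_r),i(e_t))$. Since the underlying $\eps$'s are independent with mean $q(i,j)$, this is a strong law of large numbers argument of exactly the same shape as the one used for Theorem \ref{clt}. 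The main obstacle is this last concentration step; I expect to follow the streamlined CLT proof in the appendix essentially verbatim, with pair blocks replaced by bipartite pair blocks, and to intersect the resulting full probability sets over the finitely many choices of $(\vec{i},\vec{i'},\si^b)$ of each fixed length to obtain a single $\Om_0\subset\Om$ on which the stated formula holds for all $\om\in\Om_0$.
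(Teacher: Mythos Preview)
Your proposal is correct and follows essentially the same route as the paper: expand the trace, observe that nonzero contributions force $s=s'$ and a bipartite pair partition $\si^b\le\si(\vec{i}\sqcup\vec{i'})$, compute the resulting sign product over $I^b(\si^b)$, count $m(m-1)\cdots(m-s+1)$ admissible $(\vec{k},\vec{k'})$, and handle almost sure convergence via the Borel--Cantelli variance estimate from the appendix. The only cosmetic remark is that the intersection of full-probability sets must be taken over all $(\vec{i},\vec{i'})$ of all lengths, which is countable rather than finite, but this changes nothing.
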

\begin{proof}
  We follow the same argument as for Theorem \ref{clt}. By definition,
  \[
  \lge w(\vec{i}), w(\vec{i'})\rge = \lim_{m,\ux}\frac{1}{m^{(s+s')/2}}\sum_{\substack{\vec{k},\vec{k'}: \si(\vec{k})\in P_1(s),\\ \si(\vec{k'})\in P_1(s')}} \tau_m[x_{i'_{s'}}(k'_{s'})\cdots x_{i'_{1}}(k'_{1})x_{i_1}(k_1)\cdots x_{i_s}(k_s)].
  \]
  Since all $k_\al$'s are pairwise different, $\tau_m[x_{i'_{s'}}(k'_{s'})\cdots x_{i'_{1}}(k'_{1})x_{i_1}(k_1)\cdots x_{i_s}(k_s)]=0$ unless $s=s'$. Moreover, every $x_{i_\al}(k_\al)$ has to be the same as exact one $x_{i'_\bt}(k'_\bt)$ in order to contribute to the sum. This implies that $\vec{i}$ and $\vec{i'}$ are equal as multisets. We rewrite
  \begin{align*}
  &\sum_{\substack{\vec{k},\vec{k'}: \si(\vec{k})\in P_1(s),\\ \si(\vec{k'})\in P_1(s')}} \tau_m[x_{i'_{s'}}(k'_{s'})\cdots x_{i'_{1}}(k'_{1})x_{i_1}(k_1)\cdots x_{i_s}(k_s)] \\
  &= \sum_{\substack{ \si(\vec{k}),\si(\vec{k'})\in P_1(s)}} \tau_m[x_{i'_{s}}(k'_{s})\cdots x_{i'_{1}}(k'_{1})x_{i_1}(k_1)\cdots x_{i_s}(k_s)]\\
  &= \sum_{\substack{\si^b\in P_2^b(2s)\\ \si^b\le \si(\vec{i}\sqcup \vec{i'})}}\sum_{\si(\vec{k}\sqcup \vec{k'})=\si^b}\tau_m[x_{i'_{s}}(k'_{s})\cdots x_{i'_{1}}(k'_{1})x_{i_1}(k_1)\cdots x_{i_s}(k_s)]
  \end{align*}
  If $\{r,t\}\in I^b(\si^b)$, then we have to switch $x_{i({e_r})}(k({e_r}))$ and $x_{i({e_t})}(k({e_t}))$ to cancel the corresponding $x_{i({z_r})}(k({z_r}))$ and $x_{i({z_t})}(k({z_t}))$ terms. It follows that
\[
\tau_m[x_{i'_{s}}(k'_{s})\cdots x_{i'_{1}}(k'_{1})x_{i_1}(k_1)\cdots x_{i_s}(k_s)] = \prod_{\{r,t\}\in I^b(\si^b)} \eps([i(e_r),k(e_r)], [i(e_t),k(e_t)]).
\]
Since $\vec{k}\in P_1(s)$, by independence, we have
\begin{align*}
  &\frac1{m^s}\sum_{\si(\vec{k}\sqcup \vec{k'})=\si^b}\ez \tau_m[x_{i'_{s}}(k'_{s})\cdots x_{i'_{1}}(k'_{1}) x_{i_1}(k_1)\cdots x_{i_s}(k_s)] \\
  &=\frac{m(m-1)\cdots (m-s+1)}{m^s} \prod_{\{r,t\}\in I^b(\si^b)} q(i(e_r), i(e_t)).
\end{align*}
Hence, if $\vec{i}=\vec{i'}$ as multisets, then
\[
\ez \lge w(\vec{i}), w(\vec{i'})\rge = \sum_{\substack{\si^b\in P_2^b(2s) \\ \si^b\le \si(\vec{i}\sqcup \vec{i'})}}\prod_{\{r,t\}\in I^b(\si^b)}q(i(e_r),i(e_t)).
\]
To show almost sure convergence, let
\[
X_m=\frac1{m^s}\sum_{\substack{\si^b\in P_2^b(2s)\\ \si^b\le \si(\vec{i}\sqcup \vec{i'})}}\sum_{\si(\vec{k}\sqcup \vec{k'})=\si^b}\tau_m[x_{i'_{s}}(k'_{s})\cdots x_{i'_{1}}(k'_{1})x_{i_1}(k_1)\cdots x_{i_s}(k_s)].
\]
Since $\pz(\om: |X_m-\ez X_m|>\eta)\le \Var(X_m)/\eta^2$, by the Borel--Cantelli lemma, it suffices to show that $\sum_{m=1}^\8 \Var(X_m)<\8$. But
\[
\Var(X_m)=\frac1{m^{2s}}\sum_{\si^b,\pi^b\in P_2^b(2s)}\sum_{\substack{\si(\vec{k}\sqcup \vec{k'})=\si^b\\ \si(\vec{\ell}\sqcup \vec{\ell'})=\pi^b}} V_{\vec{k},\vec{\ell}},
\]
where
\begin{align*}
  V_{\vec{k},\vec{\ell}} =&\ez(\tau_m[x_{i'_{s}}(k'_{s})\cdots x_{i'_{1}}(k'_{1})x_{i_1}(k_1)\cdots x_{i_s}(k_s)] \tau_m[x_{i'_{s}}(\ell'_{s})\cdots x_{i'_{1}}(\ell'_{1})x_{i_1}(\ell_1)\cdots x_{i_s}(\ell_s)])\\
  &-\ez(\tau_m[x_{i'_{s}}(k'_{s})\cdots x_{i'_{1}}(k'_{1})x_{i_1}(k_1)\cdots x_{i_s}(k_s)])\ez(\tau_m[x_{i'_{s}}(\ell'_{s})\cdots x_{i'_{1}}(\ell'_{1})x_{i_1}(\ell_1)\cdots x_{i_s}(\ell_s)])\\
  =&\ez\big[\prod_{\{r,t\}\in I^b(\si^b)} \eps([i(e_r),k(e_r)], [i(e_t),k(e_t)]) \prod_{\{r',t'\}\in I^b(\pi^b)} \eps([i(e_{r'}),\ell(e_{r'})], [i(e_{t'}),\ell(e_{t'})])\big] \\
  &-  \prod_{\{r,t\}\in I^b(\si^b)} q(i(e_r), i(e_t))  \prod_{\{r',t'\}\in I^b(\pi^b)} q(i(e_{r'}), i(e_{t'})).
\end{align*}
By independence, $V_{\vec{k},\vec{\ell}}$ is nonzero only if at least one pair $\{r,t\}\in I^b(\si^b)$ and $\{r',t'\}\in I^b(\pi^b)$ such that $\{k(e_r),k(e_t)\}=\{\ell(e_{r'}), \ell(e_{t'})\}$. In this case
\[
\#\{\vec{k},\vec{k'},\vec{\ell},\vec{\ell'}: \si(\vec{k}\sqcup \vec{k'})=\si^b, \si(\vec{\ell}\sqcup \vec{\ell'})=\pi^b\}\le m^sm^{s-2}=m^{2s-2}.
\]
Since $V_{\vec{k},\vec{\ell}}$ is uniformly bounded and $C(s):=[\#P_2^b(2s)]^2$ is independent from $m$,  we have
\[
\sum_{m=1}^\8\Var(X_m)\le\sum_{m=1}^\8 \frac{C(s)}{m^2}<\8. \qedhere
\]
\end{proof}
Recall the notation $I_p(\si)$ and $I_{sp}(\si)$ from Section \ref{s:nota}. For $\vec{i}\in[N]^d$ and $\si\in P_{1,2}(d)$ with $\si\le \si(\vec{i})$, put
\begin{equation}\label{qppr}
  f_\si(\vec{i})=\prod_{\{r,t\}\in I_p(\si)}q(i(e_r),i(e_t)) \prod_{\{r,t\}\in I_{sp}(\si)}q(i(e_r),i(e_t)),
\end{equation}
with the convention that the product over an empty index set is 1.
\begin{prop}\label{inner}
  Let $\si\in P_{1,2}(d)$ and $\si'\in P_{1,2}(d')$ be partitions. Let $w_\si(\vec{i})$ and $w_{\si'}(\vec{i'})$ be arbitrary Wick words defined in \eqref{wicar}. Then for almost all $\om\in \Om$,
  \begin{equation}\label{wwinn}
    \lge w_{\si}(\vec{i}), w_{\si'}(\vec{i'})\rge =
    \begin{cases}
      \lge f_\si(\vec{i})w(\vec{i}_{np}), f_{\si'}(\vec{i'})w(\vec{i'}_{np})\rge , \text{ if }\si\le \si(\vec{i}), \si'\le \si(\vec{i'}), \\
      0, \text{ otherwise}.
    \end{cases}
  \end{equation}
  Here $\vec{i}_{np}$ is the vector obtained by removing coordinates in $\vec{i}$ which correspond to the pair blocks of $\si$.
\end{prop}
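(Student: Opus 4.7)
The plan is to substitute the formula from Proposition \ref{algor} into definition \eqref{wicar}, reduce the inner product to a trace computation on the spin matrix model, and then exploit independence of the random signs appearing in $\eps(\vec{i},\vec{k})$ and $\eps(\vec{i'},\vec{k'})$ versus those generated by the trace of the remaining singleton word.

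First I would substitute
\[
E_{\nx_s(\vec{k})}[x_{i_1}(k_1)\cdots x_{i_d}(k_d)] = \eps(\vec{i},\vec{k})\, x_{j_1}(l'_1)\cdots x_{j_s}(l'_s)\, \de_{j_{s+1},j_{s+2}}\cdots \de_{j_{d-1},j_d}
\]
into \eqref{wicar}. The Kronecker deltas vanish unless each pair block of $\si$ has the same $\vec{i}$-value on its two positions, which is precisely $\si\le\si(\vec{i})$; this already gives $w_\si(\vec{i})=0$ whenever $\si\not\le\si(\vec{i})$, and symmetrically for $w_{\si'}(\vec{i'})$, accounting for the ``otherwise'' branch of \eqref{wwinn}. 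Assuming henceforth $\si\le\si(\vec{i})$ and $\si'\le\si(\vec{i'})$, I would expand
\[
\lge w_\si(\vec{i}),w_{\si'}(\vec{i'})\rge = \lim_{m,\ux}\frac{1}{m^{(d+d')/2}}\!\!\sum_{\substack{\vec{k}:\si(\vec{k})=\si\\ \vec{k'}:\si(\vec{k'})=\si'}}\!\!\eps(\vec{i},\vec{k})\eps(\vec{i'},\vec{k'})\, \tau_m\bigl[x_{j'_{s'}}(l''_{s'})\cdots x_{j'_1}(l''_1)\, x_{j_1}(l'_1)\cdots x_{j_s}(l'_s)\bigr].
\]
Since $l'_1,\dots,l'_s$ and $l''_1,\dots,l''_{s'}$ are pairwise distinct within each word, the trace vanishes unless $s=s'$ and the multisets $\{(j_\al,l'_\al)\}_\al$ and $\{(j'_\bt,l''_\bt)\}_\bt$ coincide; in particular this forces $\vec{i}_{np}=\vec{i'}_{np}$ as multisets. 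When the matching holds, commuting the matched pairs together as in the proof of Proposition \ref{swinn} produces a product of $\eps$'s indexed by the bipartite crossings of some $\si^b\in P^b_2(2s)$, and crucially all these signs involve only \emph{singleton} positions of $\si$ and $\si'$.

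The main obstacle is to disentangle the random signs. By definition each sign in $\eps(\vec{i},\vec{k})$ involves at least one pair-block position of $\si$ in $\vec{k}$, while those from the trace involve only singleton positions. Because $\si(\vec{k})=\si$ exactly, pair and singleton $k$-values inside $\vec{k}$ are already disjoint, and in the dominant-order configurations one may additionally restrict to $\vec{k},\vec{k'}$ for which every pair-block position of $\si$ (resp.\ $\si'$) avoids all positions of $\vec{k'}$ (resp.\ $\vec{k}$). In such configurations the three families of signs from $\eps(\vec{i},\vec{k})$, from $\eps(\vec{i'},\vec{k'})$, and from the trace are supported on pairwise disjoint elements of $J_N\times J_N$, hence are jointly independent. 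The complementary ``collision'' configurations impose at least one additional coincidence on the index vectors and therefore contribute only $O(m^{-1})$, disappearing in the ultrafilter limit.

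With independence and \eqref{qprob} in hand, $\ez[\eps(\vec{i},\vec{k})]=f_\si(\vec{i})$ and $\ez[\eps(\vec{i'},\vec{k'})]=f_{\si'}(\vec{i'})$, while the mean of the trace signs, summed over matched $\vec{k},\vec{k'}$, recovers precisely the bipartite partition sum that defines $\lge w(\vec{i}_{np}),w(\vec{i'}_{np})\rge$ in Proposition \ref{swinn}. Thus the mean of the random sum converges to $f_\si(\vec{i})f_{\si'}(\vec{i'})\lge w(\vec{i}_{np}),w(\vec{i'}_{np})\rge$, which is the right-hand side of \eqref{wwinn}. To upgrade this to almost-sure equality on a full probability set, I would bound the variance by $O(m^{-2})$ exactly as in the last step of the proof of Proposition \ref{swinn}: only pairs of configurations sharing at least one random $\eps$ contribute to $\Var$, which forces two additional index coincidences; since $\sum_m m^{-2}<\8$, Borel--Cantelli yields \eqref{wwinn} almost surely.
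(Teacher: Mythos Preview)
Your proposal is correct and follows essentially the same route as the paper's own proof: substitute the output of Proposition \ref{algor} into \eqref{wicar}, restrict to the dominant configurations in which the pair-block $k$-values of $\si$ avoid all positions of $\vec{k'}$ (and symmetrically) so that the three families of random signs become jointly independent, factor the expectation as $f_\si(\vec{i})f_{\si'}(\vec{i'})$ times the special-Wick trace from Proposition \ref{swinn}, and then upgrade convergence in mean to almost-sure via a $\sum_m m^{-2}$ variance bound and Borel--Cantelli. The paper invokes \eqref{nonp} to justify discarding the collision configurations, which is exactly your $O(m^{-1})$ counting argument.
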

\begin{exam}
  Suppose $\vec{i}=(2, 4,7, 4, 7)$ and $\si=\{\{1\},\{2\},\{4\},\{3,5\}\}$. Then $\vec{i}_{np}=(2,4,4)$.
\end{exam}
\begin{proof}[Proof of Proposition \ref{inner}]
  By definition,
  \[
  \lge w_{\si}(\vec{i}), w_{\si'}(\vec{i'})\rge= \lim_{m,\ux}\frac{1}{m^{(d+d')/2}}\sum_{\substack{\vec{k},\vec{k'}: \si(\vec{k})=\si,\\ \si(\vec{k'})=\si'}} \tau_m[x_{i'_{d'}}(k'_{d'})\cdots x_{i'_{1}}(k'_{1})x_{i_1}(k_1)\cdots x_{i_d}(k_d)].
  \]
  Note that $w_\si(\vec{i})$ is nonzero only if $\si\le \si(\vec{i})$. Then $k_\al=k_\bt$ implies $i_\al=i_\bt$. By \eqref{nonp}, we may assume that in $x_{i'_{d'}}(k'_{d'})\cdots x_{i'_{1}}(k'_{1})x_{i_1}(k_1)\cdots x_{i_d}(k_d)$ if $k_\al=k_\bt$ for $\al\neq \bt$, then $k'_\ga\neq k_\al$ for all $\ga\in[d']$. In other words,  $k_\al\neq k'_\ga$ for all $\al\in[d]$ and $\ga\in[d']$ if both of them belong to pair blocks. Applying Proposition \ref{algor} to $x_{i_1}(k_1)\cdots x_{i_d}(k_d)$ and $x_{i'_{1}}(k'_{1})\cdots x_{i'_{d'}}(k'_{d'})$, we find
  \begin{align*}
  &\tau_m[x_{i'_{d'}}(k'_{d'})\cdots x_{i'_{1}}(k'_{1})x_{i_1}(k_1)\cdots x_{i_d}(k_d)]\\
  &= \eps(\vec{i},\vec{k})\eps(\vec{i'},\vec{k'}) \tau_{m}[x_{j'_{s'}}(\ell'_{s'})\cdots x_{j'_1}(\ell_1')x_{j_1}(\ell_1) \cdots x_{j_s}(\ell_s)],
  \end{align*}
  where $\vec{\ell}\in P_1(s), \vec{\ell'}\in P_1(s')$, $\vec{j}=\vec{i}_{np}$, $\vec{j'}=\vec{i'}_{np}$, and  $\eps(\vec{i},\vec{k})\eps(\vec{i'},\vec{k'})$ is given in Proposition \ref{algor}. By independence, we have
  \begin{align*}
  &\ez\tau_m[x_{i'_{d'}}(k'_{d'})\cdots x_{i'_{1}}(k'_{1})x_{i_1}(k_1)\cdots x_{i_d}(k_d)] \\
  = &\prod_{\{r,t\}\in I_p(\si)}q(i(e_r),i(e_t)) \prod_{\{r,t\}\in I_{sp}(\si)}q(i(e_r),i(e_t)) \prod_{\{r',t'\}\in I_p(\si')}q(i(e_{r'}),i(e_{t'}))\cdot \\
  &\prod_{\{r',t'\}\in I_{sp}(\si')}q(i(e_{r'}),i(e_{t'}))
  \ez\tau_{m}[x_{j'_{s'}}(\ell'_{s'})\cdots x_{j'_1}(\ell_1')x_{j_1}(\ell_1) \cdots x_{j_s}(\ell_s)].
  \end{align*}
  As shown in Proposition \ref{swinn}, $\tau_{m}[x_{j'_{s'}}(\ell'_{s'})\cdots x_{j'_1}(\ell_1')x_{j_1}(\ell_1) \cdots x_{j_s}(\ell_s)]$ is zero if $\vec{\ell}$ and $\vec{\ell'}$ are not equal as multisets, and there is nothing more to prove. Assume $\vec{\ell}$ and $\vec{\ell'}$ are equal. Let $u$ and $u'$ be the number of pair blocks in $\si$ and $\si'$. By Proposition \ref{swinn}, we find
  \begin{align*}
    \ez \lge w_{\si}(\vec{i}), w_{\si'}(\vec{i'})\rge =& \lim_{m,\ux} \frac{m\cdots (m-s+1)}{m^{s}}\cdot \frac{(m-s)\cdots (m-s-u-u'+1)}{m^{u+u'}}\cdot \\
    &f_\si(\vec{i})f_{\si'}(\vec{i'}) \ez\tau_{m}[x_{j'_{s'}}(\ell'_{s'})\cdots x_{j'_1}(\ell_1')x_{j_1}(\ell_1) \cdots x_{j_s}(\ell_s)]\\
    =&  f_{\si}(\vec{i})f_{\si'}(\vec{i'})\ez\lge w(\vec{i}_{np}),  w(\vec{i'}_{np})\rge.
  \end{align*}
  The almost sure convergence follows from the same argument as for Proposition \ref{swinn} using the Borel--Cantelli lemma and independence.
\end{proof}
In the two proofs above, the Borel--Cantelli lemma may be avoided if we use the average model $\Ga^a_Q$; see Section \ref{s:cltul}. Note that for $\vec{i}\in [m]^s$, $w_\si(\vec{i})=w(\vec{i})$ for any $\si \in P_1(s)$.
\begin{cor}\label{ident}
  Let $\si\le\si(\vec{i})$. We have $w_\si(\vec{i})=f_\si(\vec{i})w(\vec{i}_{np})$ for almost all $\om\in \Om$.
\end{cor}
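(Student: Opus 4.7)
The plan is to compute the $L_2$ norm of the difference $v := w_\si(\vec{i}) - f_\si(\vec{i})\,w(\vec{i}_{np})$ and show that it vanishes almost surely. Since $L_2(\Ga_Q,\tau_\ux)$ is a Hilbert space, this forces $v=0$ in $L_2$, which is exactly the claimed identity. No new estimates are needed: everything reduces to three applications of Proposition~\ref{inner} and a routine cancellation.

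The key preparatory observation is that the special Wick word $w(\vec{i}_{np})$ is itself an arbitrary Wick word. Let $s = |\vec{i}_{np}|$ and let $\pi = \{\{1\},\{2\},\dots,\{s\}\} \in P_1(s)$ be the singleton partition. Then by definition $w(\vec{i}_{np}) = w_\pi(\vec{i}_{np})$, trivially $\pi \le \si(\vec{i}_{np})$, the crossing sets $I_p(\pi)$ and $I_{sp}(\pi)$ are empty so $f_\pi(\vec{i}_{np}) = 1$, and $(\vec{i}_{np})_{np} = \vec{i}_{np}$. With this identification, Proposition~\ref{inner} applies verbatim to every inner product appearing below.

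Expanding, and using that $f_\si(\vec{i}) \in \rz$ since each $q_{ij} \in [-1,1]$, we get
\begin{equation*}
\|v\|_2^2 \;=\; \lge w_\si(\vec{i}),\, w_\si(\vec{i})\rge \;-\; 2 f_\si(\vec{i}) \lge w_\si(\vec{i}),\, w(\vec{i}_{np})\rge \;+\; f_\si(\vec{i})^2\,\|w(\vec{i}_{np})\|_2^2.
\end{equation*}
Proposition~\ref{inner}, applied with $(\si',\vec{i'}) = (\si,\vec{i})$ in the first inner product and with $(\si',\vec{i'}) = (\pi,\vec{i}_{np})$ in the second, yields almost surely
\begin{equation*}
\lge w_\si(\vec{i}), w_\si(\vec{i})\rge = f_\si(\vec{i})^2\,\|w(\vec{i}_{np})\|_2^2, \qquad \lge w_\si(\vec{i}), w(\vec{i}_{np})\rge = f_\si(\vec{i})\,\|w(\vec{i}_{np})\|_2^2.
\end{equation*}
Substituting the two identities gives $\|v\|_2^2 = 0$ on the intersection of the three full-probability sets produced by Proposition~\ref{inner}, which is still a full-probability set. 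Hence $w_\si(\vec{i}) = f_\si(\vec{i})\,w(\vec{i}_{np})$ for almost all $\om \in \Om$, as desired. The main (and only) subtlety is the bookkeeping that lets one view $w(\vec{i}_{np})$ as an arbitrary Wick word attached to a singleton partition so that Proposition~\ref{inner} is applicable in the mixed inner product; beyond this there is no genuine obstacle.
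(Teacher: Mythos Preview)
Your proof is correct and follows essentially the same approach as the paper: both compute $\|w_\si(\vec{i})-f_\si(\vec{i})w(\vec{i}_{np})\|_2^2$ and show it vanishes using Proposition~\ref{inner}, relying on the identification of $w(\vec{i}_{np})$ with the arbitrary Wick word for the singleton partition (which the paper records just before the corollary). The paper is simply terser, writing out only the cross term and leaving the remaining cancellation to ``linearity,'' whereas you spell out all three terms explicitly.
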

\begin{proof}
  Since $\tau_\ux$ is faithful on $\Ga_Q$, it suffices to show
  \[
  \tau_\ux[(w_\si(\vec{i})-f_\si(\vec{i})w(\vec{i}_{np}))^* (w_\si(\vec{i})-f_\si(\vec{i})w(\vec{i}_{np}))]=0.
  \]
  But by Proposition \ref{inner}, we have
  \[
  \lge w(\vec{i}_{np}), w_\si(\vec{i})\rge= f_{\si}(\vec{i})\lge w(\vec{i}_{np}), w(\vec{i}_{np})\rge.
  \]
  From here the claim follows by linearity.
\end{proof}
This result yields the following identification
\[
L_2\dash\Span\{w_\si(\vec{i}):\vec{i}\in [N]^d, \si\in P_{1,2}(d), d\in \zz_+\}= L_2\dash\Span\{w(\vec{i}): \vec{i}\in [N]^s, s\in \zz_+\}
\]
with the inner product relation given by \eqref{wwinn}.
\begin{prop}\label{isom1}
 $L_2(\Ga_Q)= L_2\dash\Span\{w(\vec{i}): \vec{i}\in [N]^s, s\in \zz_+\}$.
\end{prop}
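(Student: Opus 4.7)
The inclusion $L_2(\Ga_Q) \subset L_2\dash\Span\{w(\vec{i}) : \vec{i}\in[N]^s,\, s\in\zz_+\}$ is essentially already in place: Theorem \ref{wicdec} expresses each monomial $(\td x_{i_1}(m))^\bullet\cdots(\td x_{i_d}(m))^\bullet$ as a finite sum of arbitrary Wick words $w_\si(\vec{i})$, and Corollary \ref{ident} rewrites each such $w_\si(\vec{i})$ as the scalar multiple $f_\si(\vec{i})w(\vec{i}_{np})$ of a special Wick word. Since the polynomial algebra in the generators is $L_2$-dense in $L_2(\Ga_Q)$ (the generators lie in $\Ga_Q^\8$, and the $*$-algebra they generate is strongly dense in $\Ga_Q$ by construction via spectral projections), the forward inclusion follows.

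For the reverse inclusion, the plan is to prove by induction on $s=|\vec{i}|$ that $w(\vec{i})\in L_2(\Ga_Q)$. The base case $s=0$ is $w(\emptyset)=1\in L_2(\Ga_Q)$. For the inductive step I would apply Theorem \ref{wicdec} to the product of generators $x_{i_1}\cdots x_{i_s}=(\td x_{i_1}(m))^\bullet\cdots(\td x_{i_s}(m))^\bullet$, obtaining
\[
x_{i_1}\cdots x_{i_s}
=\sum_{\substack{\si\in P_{1,2}(s)\\ \si\le\si(\vec{i})}} w_\si(\vec{i})
=w(\vec{i})+\sum_{\substack{\si\in P_{1,2}(s),\,\si\le\si(\vec{i})\\ \si\neq\{\{1\},\dots,\{s\}\}}} f_\si(\vec{i})\,w(\vec{i}_{np}),
\]
where the distinguished term with all singletons reduces via Corollary \ref{ident} to $w(\vec{i})$ itself (since then $f_\si(\vec{i})=1$ and $\vec{i}_{np}=\vec{i}$), while each remaining summand is a scalar multiple of $w(\vec{i}_{np})$ with $|\vec{i}_{np}|<s$, hence lies in $L_2(\Ga_Q)$ by the induction hypothesis. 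Because every generator lies in $\Ga_Q^\8=\bigcap_{p<\8}L_p(\Ga_Q,\tau_\ux)$, H\"older's inequality gives $x_{i_1}\cdots x_{i_s}\in L_2(\Ga_Q)$, and solving the displayed identity for $w(\vec{i})$ completes the induction.

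This closes both inclusions. There is no serious obstacle: the probabilistic heavy lifting has been absorbed into Theorem \ref{wicdec} and Corollary \ref{ident}, and the only delicate point is the standard density statement that polynomials in the generators are $L_2$-dense in $L_2(\Ga_Q)$. This is justified by the construction in Section \ref{s:cltul}, where $\Ga_Q$ is defined precisely as the von Neumann algebra generated by the spectral projections of the (possibly unbounded) operators $(\td x_i(m)(\om))^\bullet\in\Ga_Q^\8$, so bounded spectral truncations of these generators approximate arbitrary elements of $\Ga_Q$ in $L_2$, and these truncations are in turn $L_2$-approximable by polynomials in the generators via functional calculus.
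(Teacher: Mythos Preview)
Your proof is correct and follows essentially the same approach as the paper: the forward inclusion via Theorem \ref{wicdec} and Corollary \ref{ident}, and the reverse inclusion by induction on $|\vec{i}|$ using the Wick word decomposition to isolate $w(\vec{i})$ from shorter terms. The paper's proof starts the induction at $s=1$ and separates the case $\si(\vec{i})\in P_1(s)$ explicitly, whereas you start at $s=0$ and absorb that case into the general step (the sum over non-singleton $\si$ is simply empty there); these are cosmetic differences.
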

\begin{proof}
  Write $H_w:=L_2\dash\Span\{w(\vec{i}): \vec{i}\in [N]^s, s\in \zz_+\}$. By Theorem \ref{wicdec} and Corollary \ref{ident}, $L_2(\Ga_Q)\subset H_w$. It remains to show that $H_w\subset L_2(\Ga_Q)$. We proceed by induction on the length $s$ of special Wick words $w(\vec{i})$. First observe that if $\si(\vec{i})\in P_1(s)$, then the only partition $\si\le \si(\vec{i})$ is $\si(\vec{i})$ itself. In this case, by Theorem \ref{wicdec}, we have
  \begin{equation}\label{sibel}
      (\td x_{i_1}(m))^\bullet\cdots (\td x_{i_s} (m))^\bullet= w_{\si(\vec{i})}(\vec{i})=w(\vec{i})\in L_2(\Ga_Q)
  \end{equation}
  since every $(\td x_{i_1}(m))^\bullet$ is in $\cap_{p<\8} L_p(\Ga_Q)$.
  If $s=1$,
  \[
  w(\vec{i})=\big(\frac1{\sqrt{m}}\sum_{k_1=1}^m x_{i_1}(k_1)\big)^\bullet \in L_2(\Ga_Q)
  \]
  by definition. If $s=2$ and $i_1\neq i_2$, then $w(\vec{i})\in L_2(\Ga_Q)$ by $\eqref{sibel}$. If $i_1=i_2$, using Theorem \ref{wicdec}, we find
  \[
  (\td x_{i_1}(m))^\bullet (\td x_{i_2} (m))^\bullet =w_{\si(\vec{i})}(\vec{i}) +w_{\si_0}(\vec{i})=1+w_{\si_0}(\vec{i}),
  \]
  where $\si_0\in P_1(2)$. It follows that $w(\vec{i})=w_{\si_0}(\vec{i})\in L_2(\Ga_Q)$. Suppose $w(\vec{i})\in L_2(\Ga_Q)$ for all $\vec{i}$ with $|\vec{i}|<s$. Consider $\vec{i}\in [N]^s$. We know $w(\vec{i})\in L_2(\Ga_Q)$ if $\si(\vec{i})\in P_1(s)$. If $\si(\vec{i})\notin P_1(s)$, by Theorem \ref{wicdec}, we have
  \begin{equation}\label{iindu}
  (\td x_{i_1}(m))^\bullet\cdots (\td x_{i_s} (m))^\bullet = w_{\si_0}(\vec{i}) +\sum_{\substack{\si\in P_{1,2}(s)\\ \si\le \si(\vec{i}),\si\notin P_1(s)}}w_{\si}(\vec{i}),
  \end{equation}
  where $\si_0\in P_1(s)$. By Corollary \ref{ident}, we have $w_{\si}(\vec{i})=f_\si(\vec{i})w(\vec{i}_{np})$, and $\vec{i}_{np}$ is a vector of dimension at most $s-2$. By the induction hypothesis, $w_{\si}(\vec{i})\in L_2(\Ga_Q)$ for $\si\notin P_1(s)$. We deduce from \eqref{iindu} that $w(\vec{i})=w_{\si_0}(\vec{i})\in L_2(\Ga_Q)$.
\end{proof}
\subsection{Fock spaces and mixed $q$-commutation relations}\label{s:fock}
As consequences of the work in the previous section, we can describe the Fock space and creation/annihilation operators associated to $\Ga_Q$. Given a vector $\vec{i}$, we denote by $|\vec{i}|$ the number of nonzero coordinates in $\vec{i}$. Let $H^s_Q=\Span\{w(\vec{i}): |\vec{i}|=s\}$. We define the mixed Fock space by
\begin{equation}
  \fx_Q=\bigoplus_{s=0}^\8 H^s_Q.
\end{equation}
Clearly, $\fx_Q=L_2\dash\Span\{w(\vec{i}): \vec{i}\in [N]^s, s\in \zz_+\}$, which can be further identified with $L_2(\Ga_Q)$ by Proposition \ref{isom1}.
\begin{prop}
  Let $x_j=(\frac1{\sqrt{m}} \sum_{k=1}^m x_j(k))^\bullet \in \Ga_Q^\8, j=1,\cdots, N$ be generators of $\Ga_Q$ and $w(\vec{i})\in H^s_Q$. Then
  \[
  x_j w(\vec{i}) = w(j\sqcup \vec{i}) +\sum_{l=1}^s \de_{j,i_l} w(\vec{i}- i_l) \prod_{r=1}^{l-1}q(i_r,i_{l}) .
  \]
  Here $j\sqcup \vec{i}=(j,i_1,\cdots,i_s)\in[N]^{s+1}$ is the concatenation operation defined in \eqref{cate}, $\vec{i}- i_l=(i_1,\cdots,i_{l-1},i_{l+1},\cdots,i_s)$ with $|\vec{i}- i_l|=s-1$, and we understand  the product over empty index set is 1. Therefore,
  \[
  x_j=\sum_{s=0}^\8 P_{s+1} x_j P_s +\sum_{s=1}^\8 P_{s-1}x_j P_s,
  \]
  where $P_s: \fx_Q\to H^s_Q$ is the orthogonal projection.
\end{prop}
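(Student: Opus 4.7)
The plan is to expand $x_j w(\vec{i})$ directly from the definitions and partition the resulting double sum combinatorially. By definition,
\[
x_j w(\vec{i}) = \Big(\frac{1}{m^{(s+1)/2}} \sum_{k_0=1}^m \sum_{\vec{k} \in [m]^s : \si(\vec{k}) \in P_1(s)} x_j(k_0) x_{i_1}(k_1) \cdots x_{i_s}(k_s)\Big)^{\bullet},
\]
which I would split according to whether $k_0 \in \{k_1, \ldots, k_s\}$. The subsum with $k_0$ distinct from every $k_r$ is precisely $w(j \sqcup \vec{i})$, since then $(k_0, k_1, \ldots, k_s) \in P_1(s+1)$. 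The remaining part decomposes as a sum over $l \in [s]$ of the contributions where $k_0 = k_l$.

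For each such $l$ I would apply the mixed commutation relations in $\ax_m$ to pull $x_j(k_0)$ past $x_{i_1}(k_1), \ldots, x_{i_{l-1}}(k_{l-1})$, picking up a random sign $\prod_{r=1}^{l-1} \eps((j, k_l), (i_r, k_r))$. Two subcases then arise. If $j \neq i_l$, then $x_j(k_l)$ and $x_{i_l}(k_l)$ are distinct generators, so the word remains reduced of length $s+1$ while only $s$ indices are free (because $k_0 = k_l$); the Khintchine/Burkholder-type estimate in Proposition \ref{pbound}, together with the ``attaching'' trick used in Lemma \ref{sinpar}, then shows that this subsum has $L_p(\ax_m,\tau_m)$-norm of order $O(m^{-1/2})$, hence vanishes in the ultraproduct. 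If $j = i_l$, the identity $x_{i_l}(k_l)^2 = 1$ collapses the word to length $s-1$ in the variables $\vec{i} - i_l$, leaving the prefactor $\prod_{r=1}^{l-1} \eps((i_l, k_l), (i_r, k_r))$.

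The main obstacle is to show that in the $j = i_l$ case the surviving sum
\[
\frac{1}{m^{(s+1)/2}} \sum_{\vec{k} : \si(\vec{k}) \in P_1(s)} \Big(\prod_{r=1}^{l-1} \eps((i_l, k_l), (i_r, k_r))\Big) x_{i_1}(k_1) \cdots \widehat{x_{i_l}(k_l)} \cdots x_{i_s}(k_s)
\]
converges in $L_2$ to $\big(\prod_{r=1}^{l-1} q(i_r, i_l)\big) w(\vec{i} - i_l)$ for a.e.~$\om \in \Om$. Writing each $\eps_r = q(i_r, i_l) + \xi_r$ with $\xi_r$ centered and uniformly bounded, the product $\prod \eps_r$ splits into a main term $\prod q(i_r, i_l)$ plus fluctuation terms containing at least one $\xi_r$. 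The main term reassembles the inner sum into $w(\vec{i} - i_l)$ up to a boundary correction of $O(m^{-1/2})$ arising from the constraint $k_l \notin \{k_1, \ldots, \widehat{k_l}, \ldots, k_s\}$. For the fluctuation terms, independence of the $\eps$'s across distinct index pairs yields a variance bound of order $1/m$, and Borel--Cantelli then produces a.s.~vanishing, exactly along the lines of the proofs of Theorem \ref{clt} and Proposition \ref{swinn}.

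Summing the three contributions yields the claimed action formula. The displayed decomposition $x_j = \sum_{s \ge 0} P_{s+1} x_j P_s + \sum_{s \ge 1} P_{s-1} x_j P_s$ is then immediate: the action formula shows that $x_j$ maps $H^s_Q$ into $H^{s+1}_Q + H^{s-1}_Q$, and these subspaces are orthogonal to every other $H^r_Q$ in the decomposition $\fx_Q = \bigoplus_{s \ge 0} H^s_Q$, so the only nonzero compressions of $x_j$ by the $P_s$'s are those displayed.
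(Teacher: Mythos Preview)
Your argument is correct and reaches the same conclusion, but it takes a more hands-on route than the paper. After the common first step---splitting off $w(j\sqcup\vec{i})$ and writing the remainder as a sum over $l$ with $k_0=k_l$---the paper simply observes that each such piece is by definition the arbitrary Wick word $w_{\si_l}(j\sqcup\vec{i})$ for $\si_l=\{\{1,l+1\},\{2\},\ldots,\{l\},\{l+2\},\ldots,\{s+1\}\}$ (this identification is exactly what Lemma~\ref{sinpar} provides), and then invokes Corollary~\ref{ident} to read off $w_{\si_l}(j\sqcup\vec{i})=\de_{j,i_l}\prod_{r=1}^{l-1}q(i_r,i_l)\,w(\vec{i}-i_l)$. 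Your treatment instead re-derives this special case of Corollary~\ref{ident} by hand: you commute $x_j(k_l)$ into position, collapse $x_{i_l}(k_l)^2=1$, decompose the random sign $\prod_r\eps$ into mean plus fluctuation, and run a Borel--Cantelli argument for the fluctuation. That is precisely the CLT-style computation already packaged in Proposition~\ref{inner}/Corollary~\ref{ident}, so the paper's proof is shorter by quoting the machinery, while yours is more self-contained. Both handle the $j\neq i_l$ case the same way, via (the proof of) Lemma~\ref{sinpar}.
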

\begin{proof}
By definition,
\begin{align*}
  x_j w(\vec{i})= &\Big(\frac1{\sqrt{m}} \sum_{k_0=1}^m x_j(k_0) \frac1{m^{s/2}} \sum_{\vec{k}\in[m]^s: \si(\vec{k})\in P_1(s)} x_{i_1}(k_1)\cdots x_{i_s}(k_s) \Big)^\bullet\\
  =&\Big(\frac1{{m}^{(s+1)/2}}\sum_{k_0\sqcup \vec{k}\in[m]^{s+1}: \si(k_0\sqcup\vec{k})\in P_1(s+1)} x_{j}(k_0) x_{i_1}(k_1)\cdots x_{i_s}(k_s) \Big)^\bullet \\
  &+\sum_{l=1}^s \Big(\frac1{m^{(s+1)/2}} \sum_{\substack{k_0\sqcup\vec{k}\in[m]^{s+1}: k_0=k_l\\ \si(\vec{k})\in P_1(s)}} x_j(k_0)x_{i_1}(k_1)\cdots x_{i_s}(k_s)\Big)^\bullet.
\end{align*}
The first term in the above equation is clearly the special Wick word $w(j\sqcup \vec{i})$. To understand the second one, we define $\si_l \in P_{1,2}(s+1)$ by $\si_l=\si(k_0\sqcup \vec{k})$ for $k_0=k_l$ and $\vec{k}\in P_1(s)$, i.e.,
\[
\si_l=\{\{1,l+1\},\{2\}, \cdots, \{l\},\{l+2\}, \cdots, \{s+1\} \}.
\]
Using (the proof of) Lemma \ref{sinpar}, we deduce that the arbitrary Wick word
\[
\Big(\frac1{m^{(s+1)/2}} \sum_{\substack{k_0\sqcup\vec{k}\in[m]^{s+1}: k_0=k_l\\ \si(\vec{k})\in P_1(s)}} x_j(k_0)x_{i_1}(k_1)\cdots x_{i_s}(k_s)\Big)^\bullet = w_{\si_l}(j\sqcup\vec{i}).
\]
Note that $w_{\si_l}(j\sqcup\vec{i})$ is nonzero only if $\si_l\le \si(j\sqcup\vec{i})$ or equivalently $j=i_l$. Using \eqref{qppr} and Corollary \ref{ident}, we find
\[
w_{\si_l}(j\sqcup\vec{i}) =\de_{j,i_l} \prod_{r=1}^{l-1}q(i_r,i_{l}) w(\vec{i}- i_l).\qedhere
\]
\end{proof}
Define operators $c_j$ and $a_j$ acting on $\fx_Q$ by
\begin{equation}\label{crann}
c_jw(\vec{i})=w(j\sqcup \vec{i}), \quad a_j w(\vec{i})= \sum_{l=1}^s\de_{j,i_l} w(\vec{i}- i_l) \prod_{r=1}^{l-1}q(i_r,i_{l}) .
\end{equation}
Clearly $x_j=c_j+a_j$, $c_j=\sum_{s=0}^\8 P_{s+1} x_j P_s$ and $a_j=\sum_{s=1}^\8 P_{s-1}x_j P_s$. Since $x_j=x_j^*$, we have $c_j^*=a_j$. We call $c_j$ and $a_j$ the creation and annihilation operators respectively for $j=1,\cdots,N$. The following result is simply a recapitulation; see e.g. \cite{BO08} for more about QWEP $\mathrm{C}^*$-algebras.
\begin{cor}
  Let $\td\Ga_Q$ be the von Neumann algebra generated by (the spectral projections of) $c_j+a_j$ for $j=1,\cdots, N$. Then $\Ga_Q=\td\Ga_Q$. In particular, $\td\Ga_Q$ is QWEP.
\end{cor}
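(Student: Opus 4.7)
The plan is to combine the identification in Proposition \ref{isom1} with the action formula from the preceding proposition, and then appeal to standard permanence of QWEP under tracial ultraproducts and trace-preserving conditional expectations.

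First, I would fix the identification of $L_2(\Ga_Q)$ with $\fx_Q$ coming from Proposition \ref{isom1}, sending the special Wick word $w(\vec{i})\in L_2(\Ga_Q)$ to the same symbol $w(\vec{i})$ viewed as an element of $\fx_Q$. Under this identification, the preceding proposition says exactly that for each generator $x_j$, the unbounded operator given by left multiplication by $x_j$ on $L_2(\Ga_Q)$ agrees with $c_j+a_j$ on the dense subspace $\Span\{w(\vec{i})\}$. Both operators are symmetric on this core: $x_j=x_j^*$ is self-adjoint in $\Ga_Q^\8$, while $c_j+a_j$ is symmetric because $c_j^*=a_j$ as shown immediately above the statement. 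Since the two agree on a common core, their self-adjoint extensions coincide, and hence their spectral projections coincide. The von Neumann algebra $\Ga_Q$ (acting by left multiplication on $L_2(\Ga_Q)$) is by definition generated by the spectral projections of the $x_j$'s, and $\td\Ga_Q$ is generated by the spectral projections of the $c_j+a_j$'s, so $\Ga_Q=\td\Ga_Q$.

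For the QWEP assertion, I would argue as follows. The spin matrix algebras $\ax_m$ are finite-dimensional, hence trivially QWEP. By Kirchberg's theorem that the tracial ultraproduct of a family of QWEP tracial von Neumann algebras is again QWEP (see e.g. the treatment in \cite{BO08}), the ultraproduct $\ax_\ux=\prod_{m,\ux}\ax_m$ is QWEP. By the construction in Section \ref{s:cltul}, $\Ga_Q$ embeds into $\ax_\ux$ as a von Neumann subalgebra and the restriction of $\tau_\ux$ to $\Ga_Q$ is the canonical trace; since both algebras are finite and tracial, there is a normal faithful trace-preserving conditional expectation $\ax_\ux\to\Ga_Q$. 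Being the range of such a conditional expectation from a QWEP algebra, $\Ga_Q=\td\Ga_Q$ is itself QWEP.

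The only point requiring any care is the self-adjointness/closability step identifying $x_j$ with $c_j+a_j$: one must check that $\Span\{w(\vec{i})\}$ is a core for both operators, which follows from the fact that this span is dense in $L_2$ (Proposition \ref{isom1}) and invariant under both, together with the standard Nelson-type criterion using the moment bound $\tau_\ux(|x_j|^p)\le Cp^p$ established after \eqref{mome}. There is no real obstacle; the statement is indeed a recapitulation of the constructions assembled in this section.
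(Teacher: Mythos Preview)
Your proposal is correct and matches the paper's intent: the paper gives no proof here, merely calling the corollary ``simply a recapitulation'' and referring to \cite{BO08} for QWEP background. Your argument fills in exactly the details the paper leaves implicit---the identification $x_j=c_j+a_j$ was already stated just before the corollary, and the QWEP argument via finite-dimensionality of $\ax_m$, permanence under tracial ultraproducts, and passage to a subalgebra with conditional expectation is the standard route.
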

\begin{prop}
For $j,k=1,\cdots, N$,  $c_j$ and $c_j^*$ satisfy the mixed $q$-commutation relation
  \begin{equation}\label{qcomm}
      c_k^* c_j-q(j,k)c_jc^*_k=\de_{j,k}1.
  \end{equation}
\end{prop}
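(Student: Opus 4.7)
The plan is to verify the relation \eqref{qcomm} by directly applying the operators $c_j$ and $c_k^* = a_k$ to an arbitrary basis vector $w(\vec{i}) \in H_Q^s$ and comparing both sides. Since $\fx_Q$ is spanned by special Wick words and both sides of \eqref{qcomm} are (unbounded) operators defined on this span, verifying the identity on each $w(\vec{i})$ suffices.

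First, I would compute $c_k^* c_j w(\vec{i})$. Applying $c_j$ first gives $w(j \sqcup \vec{i}) = w(j, i_1, \ldots, i_s)$. Then applying $a_k$ via \eqref{crann} to this word of length $s+1$ with leading index $j$ yields a sum over positions $l = 0, 1, \ldots, s$ (where position $0$ corresponds to the inserted $j$). The $l = 0$ term produces $\delta_{j,k}\, w(\vec{i})$ with an empty $q$-product, while for $l \geq 1$, the annihilation at position $l$ contributes
\[
\delta_{k, i_l}\, q(j, i_l) \prod_{r=1}^{l-1} q(i_r, i_l)\, w(j, i_1, \ldots, \hat{i_l}, \ldots, i_s),
\]
where the factor $q(j, i_l)$ arises from the index $j$ now occupying position $0$ to the left of $i_l$.

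Next, I would compute $c_j c_k^* w(\vec{i}) = c_j a_k w(\vec{i})$. Applying $a_k$ first gives $\sum_{l=1}^s \delta_{k, i_l} \prod_{r=1}^{l-1} q(i_r, i_l)\, w(\vec{i} - i_l)$, and then $c_j$ prepends $j$ to produce
\[
\sum_{l=1}^s \delta_{k, i_l} \prod_{r=1}^{l-1} q(i_r, i_l)\, w(j, i_1, \ldots, \hat{i_l}, \ldots, i_s).
\]
Multiplying by $q(j,k)$ and using that $\delta_{k, i_l}$ forces $q(j,k) = q(j, i_l)$, the resulting sum matches exactly the $l \geq 1$ portion of $c_k^* c_j w(\vec{i})$. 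Subtracting then cancels all $l \geq 1$ contributions, leaving only the $l = 0$ term $\delta_{j,k}\, w(\vec{i})$, which is what \eqref{qcomm} asserts.

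The only subtlety worth flagging is bookkeeping: ensuring that when $a_k$ acts on $w(j \sqcup \vec{i})$, the $l$-th contribution carries the correct product of $q$-factors indexed by all preceding entries including the newly inserted $j$. This is precisely what produces the extra $q(j, i_l)$ factor that, combined with $\delta_{k, i_l}$, matches $q(j,k)$ in the second computation and makes the cancellation exact. No further estimates are needed; the relation is an algebraic identity on $\fx_Q$.
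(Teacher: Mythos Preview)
Your proposal is correct and follows essentially the same approach as the paper: both compute $c_k^*c_j$ and $c_jc_k^*$ directly on a basis vector $w(\vec{i})$ using the defining formulas \eqref{crann} and compare. The only cosmetic difference is that the paper splits into the cases $j=k$ and $j\neq k$, whereas you handle both uniformly via the observation that $\de_{k,i_l}$ forces $q(j,k)=q(j,i_l)$; the underlying computation is identical.
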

\begin{proof}
  Let $w(\vec{i})\in H_Q^s$. Then,
  \[
  c_j^*c_j w(\vec{i})=c_j^* w(j\sqcup \vec{i}) =w(\vec{i})+\sum_{l=1}^{s}\de_{j,i_l}w(j\sqcup (\vec{i}-i_l))q(j,i_l)\prod_{r=1}^{l-1}q(i_r,i_l).
  \]
  But
  \[
  c_jc_j^*w(\vec{i})=\sum_{l=1}^s \de_{j,i_l}w(j\sqcup (\vec{i}-i_l))\prod_{r=1}^{l-1} q(i_r, i_l).
  \]
  Hence $c_j^* c_jw(\vec{i})-q(j,j)c_jc^*_jw(\vec{i})=w(\vec{i})$. If $j\neq k$, then
  \[
  c_k^*c_j w(\vec{i})=c_k^* w(j\sqcup \vec{i}) =\sum_{l=1}^{s}\de_{k,i_l}w(j\sqcup (\vec{i}-i_l))q(j,i_l)\prod_{r=1}^{l-1}q(i_r,i_l),
  \]
  and
  \[
  c_jc_k^*w(\vec{i})=\sum_{l=1}^s \de_{k,i_l}w(j\sqcup (\vec{i}-i_l))\prod_{r=1}^{l-1} q(i_r, i_l).
  \]
  Hence $c_k^* c_jw(\vec{i})-q(j,k)c_jc^*_kw(\vec{i})=0$.
\end{proof}
\begin{rem}\label{rempr}
  The Fock space representation was studied in more general setting by Bo\.zejko and Speicher in \cite{BS94}. Let $(e_i)$ be an o.n.b. of a Hilbert space $H$. One can construct the Fock space $\fx_Q(H)$ following \cite{BS94,LP99}. Let $\Om$ be the vacuum state and $W$ be the Wick product, i.e.,
  \[
  W(e_{i_1}\otimes \cdots \otimes e_{i_s})\Om = e_{i_1}\otimes \cdots \otimes e_{i_s}.
  \]
  The Wick product was studied in detail in \cite{Kr00}. Suppose $\vec{i}\in [N]^s$ and $\vec{j}\in [N]^{s'}$. We have
  \[
  \lge w(\vec{i}), w(\vec{j})\rge =\lge e_{i_1}\otimes \cdots \otimes e_{i_s}, e_{j_1}\otimes \cdots \otimes e_{j_{s'}}\rge,
  \]
  where the left side is given by Proposition \ref{swinn} and the right side is understood as the inner product in $\fx_Q(H)$; see \cites{BS94, LP99}. Our argument shows that one can alternatively implement \eqref{qcomm} and construct the Fock space using the probabilistic approach (Speicher's CLT) and the von Neumann algebra ultraproduct. If $\sup_{ij} q_{ij}< 1$ and $\vec{i}\in[N]^s$, we can identify our special Wick words $w(\vec{i})$ with $W(e_{i_1}\otimes \cdots \otimes e_{i_s})$. Thus we can also identify $H_Q^s$ with $H^{\otimes s}$. This identification will play an important role when we study the operator algebraic properties of $\Ga_Q$ in later parts of this paper.
\end{rem}

\subsection{The Ornstein--Uhlenbeck semigroup on $\Ga_Q$}\label{ousg}
Let $T_t^m$ be the Ornstein--Uhlenbeck semigroup acting on $\ax_m$; see \cite{Bi}*{Section 2.1}. $T_t^m$ is given by
\[
T_t^m x_{i_1}(k_1)\cdots x_{i_d}(k_d)= e^{-td}x_{i_1}(k_1)\cdots x_{i_d}(k_d)
\]
if $\si(\vec{k})\in P_1(d)$. Let us first recall an elementary fact.
\begin{lemma}\label{l128}
  Let $(\nx,\tau)$ be noncommutative $W^*$ probability space, where $\nx$ is a von Neumann algebra and $\tau$ is a normal faithful tracial state. Let $T:\nx\to\nx$ be a $*$-preserving linear normal map with pre-adjoint map $T_*: \nx_*\to \nx_*$. Suppose $T$ is self-adjoint on $L_2(\nx,\tau)$. Then $T=T_*|_{\nx}$.
\end{lemma}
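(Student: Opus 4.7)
The plan is to verify the identity by testing against arbitrary elements of $\nx$ after identifying $\nx$ with a subspace of $\nx_*$ via the trace. Concretely, for $y\in\nx$ let $\phi_y\in\nx_*$ denote the normal functional $\phi_y(x)=\tau(xy)$; finiteness and normality of $\tau$ guarantee that $y\mapsto\phi_y$ is a well-defined injection $\nx\hookrightarrow\nx_*$, and this is the embedding under which the statement $T=T_*|_\nx$ is to be read.

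First I would unwind the definition of the pre-adjoint: for $x,y\in\nx$,
\[
(T_*\phi_y)(x)=\phi_y(Tx)=\tau\bigl((Tx)y\bigr)=\tau(y\,Tx).
\]
Next I would bring in the $L_2$ self-adjointness assumption, with inner product $\langle a,b\rangle=\tau(b^*a)$ on $L_2(\nx,\tau)$. Applied to $a=x$ and $b=y^*$, the identity $\langle Tx,y^*\rangle=\langle x,T(y^*)\rangle$ reads
\[
\tau(y\,Tx)=\tau\bigl(T(y^*)^*\,x\bigr).
\]
The $\ast$-preserving hypothesis converts $T(y^*)^*$ into $T(y)$, and the trace property rewrites $\tau(T(y)\,x)$ as $\tau(x\,Ty)=\phi_{Ty}(x)$. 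Chaining these equalities yields $(T_*\phi_y)(x)=\phi_{Ty}(x)$ for every $x\in\nx$, hence $T_*\phi_y=\phi_{Ty}$ in $\nx_*$, which is precisely the assertion $T_*|_\nx=T$.

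There is no substantive obstacle; the argument is bookkeeping once the identifications are fixed. The only point to keep in mind is that normality of $T$ is what ensures $T_*$ exists at all, and the $\ast$-preserving hypothesis is precisely what converts the sesquilinear $L_2$ pairing into the bilinear duality $\nx\times\nx_*\to\cz$ underlying the pre-adjoint. Without $\ast$-preservation one would only recover $T_*|_\nx = T^\flat$ for the $\ast$-conjugate of $T$ on $L_2$.
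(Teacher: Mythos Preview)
Your proof is correct and follows essentially the same approach as the paper: both arguments identify $\nx\hookrightarrow\nx_*$ via the trace pairing, then verify $T_*x=Tx$ (equivalently $T_*\phi_y=\phi_{Ty}$) by unwinding the duality, invoking the $L_2$ self-adjointness, and using the $\ast$-preserving hypothesis to pass from the sesquilinear inner product to the bilinear pairing. Your write-up is slightly more explicit about the role of each hypothesis, but there is no substantive difference.
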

\begin{proof}
Let $x,y\in \nx$. Denote the dual pairing between $x_*\in \nx_*$ and $x$ by $( x_*, x)$, which can be implemented by $( x_*, x)=\tau(x_* x)$. Since $\nx\subset \nx_*=L_1(\nx,\tau)$,
\[
( Tx,y) = \tau((Tx)y)=\lge Tx, y^*\rge_{L_2(\nx,\tau)} =\tau(x (Ty))=( x, Ty) = ( T_*x,y).\qedhere
\]
\end{proof}
Let $(T_t^m)_*: L_1(\ax_m)\to L_1(\ax_m)$ be the pre-adjoint map of $T_t$. By Lemma \ref{l128}, it coincides with $T_t$ on $\ax_m$. Let $\prod_{m,\ux}L_1(\ax_m)$ be the ultraproduct of Banach spaces $L_1(\ax_m)$. Recall that $\ax_\ux=\prod_{m,\ux}\ax_m$ is the von Neumann algebra ultraproduct in Section \ref{s:cltul}. Note that we have the canonical inclusion $L_1(\ax_\ux,\tau_\ux)\subset \prod_{m,\ux}L_1(\ax_m,\tau_m)$. Let $((T_t^m)_*)^\bullet$ be the usual ultraproduct of $(T_t^m)_*$. If $(x_m)^\bullet \in \ax_\ux$, then
\[
((T_t^m)_*)^\bullet(x_m)^\bullet=(T_t^m x_m)^\bullet \in \ax_\ux
\]
because $\sup_{m}\|T_t^m x_m\|\le \sup_{m}\|x_m\|<\8$. Hence, $((T_t^m)_*)^\bullet$ leaves $\ax_\ux$ invariant. We have checked the commutative diagram
\[
\xymatrix{
\ax_\ux\ar@{^{(}->}[r] \ar[d]^{((T_t^m)_*)^\bullet|_{\ax_\ux}} & L_1(\ax_\ux,\tau_\ux)\ar@{^{(}->}[r] \ar[d]^{((T_t^m)_*)^\bullet|_{L_1(\ax_\ux,\tau_\ux)}}   & \prod_{m,\ux}L_1(\ax_m,\tau_m) \ar[d]^{((T_t^m)_*)^\bullet}\\
\ax_\ux\ar@{^{(}->}[r] &  L_1(\ax_\ux,\tau_\ux) \ar@{^{(}->}[r] & \prod_{m,\ux}L_1(\ax_m,\tau_m).
}
\]
We define $T_t=[((T_t^m)_*)^\bullet|_{L_1(\ax_\ux,\tau_\ux)}]^*$. Then by construction, $T_t:\ax_\ux\to \ax_\ux$ is a normal unital completely positive map which is self-adjoint on $L_2(\ax_\ux,\tau_\ux)$. By Lemma \ref{l128} again, $T_t$ coincides with $((T_t^m)_*)^\bullet$ on $\ax_\ux$ and thus on $L_2(\ax_\ux,\tau_\ux)$. Since $\Ga_Q\subset \ax_\ux$ is a von Neumann subalgebra, $L_2(\Ga_Q)\subset L_2(\ax_\ux)\subset L_1(\ax_\ux)$. Therefore, for $\vec{i}\in[N]^s$, $w(\vec{i})\in L_2(\Ga_Q)$,
$$
T_t w(\vec{i}) = \Big( \frac1{m^{s/2}}\sum_{\vec{k}: \si(\vec{k})\in P_1(s)} e^{-t s} x_{i_1}(k_1)\cdots x_{i_s}(k_s)\Big)^{\bullet}=e^{-t s}w(\vec{i})\in L_2(\Ga_Q).
$$
Since $L_2(\Ga_Q)\cap \ax_\ux =\Ga_Q$, $T_t$ leaves $\Ga_Q$ invariant. Also, we see that $(T_t)_{t\ge0}$ is a strongly continuous semigroup in $L_2(\Ga_Q)$. Note that in general $(T_t)_{t\ge0}$ may not be a point $\si$-weakly continuous semigroup in $t$, hence may not extend to a strongly continuous semigroup on $L_2(\ax_\ux)$. By Theorem \ref{wicdec},
\[
T_t\big[(\td x_{i_1}(m))^\bullet\cdots (\td x_{i_d} (m))^\bullet \big] =\sum_{\substack{\si\in P_{1,2}(d)\\\si\le \si(\vec{i}) }} e^{-t |\si_{sing}|}w_\si(\vec{i}) = \sum_{\substack{\si\in P_{1,2}(d)\\\si\le \si(\vec{i}) }} e^{-t |\vec{i}_{np}|}f_\si(\vec{i})w(\vec{i}_{np}),
\]
where $|\si_{sing}|$ is the number of singletons in $\si$, and $|\vec{i}_{np}|$ is the dimension of $\vec{i}_{np}$. $f_\si(\vec{i})$ and $\vec{i}_{np}$ are defined in \eqref{qppr} and Proposition \ref{inner}. The generator of $T_t$ is the number operator, denoted by $A$.

\section{Analytic properties}\label{anaprop}
Our goal of this section is to prove some analytic properties for $\Ga_Q$. This will be done via a limit procedure, as was used in \cite{Bi,JPP} for proving hypercontractivity.

\subsection{Hypercontractivity}
It was proved by Biane \cite{Bi}*{Theorem 5} that the Ornstein--Uhlenbeck semigroup acting on $\ax_m=\ax_m(N,\eps)$ is hypercontractive.
\begin{theorem}\label{hypem}
  Let $1\le p,r<\8$. Then for every $\om\in \Om$
  \[
  \|T_t^m\|_{L_p\to L_r}=1 \quad \mbox{if and only if}\quad e^{-2t}\le \frac{p-1}{r-1}.
  \]
\end{theorem}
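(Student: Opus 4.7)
The plan is to adapt the Carlen--Lieb--Biane argument for the Fermion case ($\eps \equiv -1$) to arbitrary sign structures $\eps$. The strategy is induction on the total number of generators $Nm$ of $\ax_m$, with the base case reducing to the classical Bonami--Gross two-point inequality and the inductive step powered by an operator-valued two-point inequality. Two structural facts are exploited throughout: (i) the reduced words $x_B$ form an orthonormal basis of $L_2(\ax_m, \tau_m)$ on which $T_t^m$ acts diagonally as $T_t^m x_B = e^{-t|B|} x_B$; and (ii) for any fixed generator $x$, the algebra decomposes as $\ax_m = \bx \oplus \bx\cdot x$, where $\bx \subset \ax_m$ is the subalgebra generated by the remaining $Nm-1$ generators, with the two summands $L_2(\tau_m)$-orthogonal.

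The key steps I would carry out are: First, handle the base case $Nm = 1$. The subalgebra generated by a single self-adjoint involution with zero trace is isometrically $L_\infty(\{\pm1\}, \mu)$ with $\mu$ the uniform measure, and $T_t^m$ restricts to the classical two-point semigroup; hypercontractivity here with sharp bound $e^{-2t} \le (p-1)/(r-1)$ is the Bonami--Gross inequality. Second, perform the inductive step. Write $y \in \ax_m$ as $y = a + b\cdot x$ with $a,b \in \bx$, so that
\[
T_t^m(a + b\cdot x) = (T_t^m|_{\bx})(a) + e^{-t}(T_t^m|_{\bx})(b)\cdot x.
\]
Assuming by induction that $T_t^m|_{\bx}$ is $(p,r)$-hypercontractive under the constraint $e^{-2t} \le (p-1)/(r-1)$, combine this with a Carlen--Lieb-type operator two-point inequality controlling $\|T_t^m(a) + e^{-t}T_t^m(b)\cdot x\|_r$ by $\|a + b\cdot x\|_p$ to conclude hypercontractivity on $\ax_m$. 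The sufficiency direction then follows; the necessity direction follows by testing on $1 + \alpha \cdot x$ for a single generator and comparing to the base case.

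The main obstacle is verifying that the Carlen--Lieb two-point inequality on $\bx \oplus \bx\cdot x$ survives for arbitrary $\eps$. Its proof hinges on an operator-convexity estimate, combined with the conjugation automorphism $\sigma:\bx \to \bx$, $\sigma(a) = xax$, which acts on a reduced word $x_B \in \bx$ by $\sigma(x_B) = \prod_{y \in B} \eps(x,y)\cdot x_B$. The only properties of $\sigma$ used in the argument are that it is a $\tau_m$-preserving $*$-automorphism of order two and that $x^2 = 1$; both hold in our setting \emph{independently} of the choice of $\eps$, since they follow directly from the defining relations of $\ax_m(N,\eps)$. Consequently, Biane's argument for the Fermion model \cite{Bi}*{Theorem 5} transfers verbatim to the mixed-sign spin matrix algebra, yielding the claimed hypercontractivity for every $\om \in \Om$ with the sharp constraint $e^{-2t} \le (p-1)/(r-1)$.
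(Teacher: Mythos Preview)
The paper does not prove this theorem: it is stated as a known result with the attribution ``It was proved by Biane \cite{Bi}*{Theorem 5}'' and no further argument is given. Your proposal correctly sketches the Carlen--Lieb inductive scheme that Biane uses in the cited reference, and your observation that the key conjugation automorphism $\si(a)=xax$ remains a trace-preserving $*$-automorphism for arbitrary sign structures $\eps$ is exactly why Biane's proof applies to the general spin model $\ax_m(N,\eps)$ and not merely the fermionic case; so your outline matches the approach of the cited source rather than offering an alternative.
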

With the hard work done in the previous section, it is very easy to prove the following result.
\begin{theorem}\label{hypeu}
  Let $T_t$ be the Ornstein--Uhlenbeck semigroup on $\Ga_Q$ for arbitrary $N\times N$ symmetric matrix $Q$ with entries in $[-1,1]$. Then for $1\le p,r<\8$,
  \[
  \|T_t\|_{L_p\to L_r}=1 \quad \mbox{if and only if}\quad e^{-2t}\le \frac{p-1}{r-1}.
  \]
\end{theorem}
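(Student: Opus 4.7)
The plan is to transfer Biane's hypercontractivity (Theorem \ref{hypem}) for the spin systems $\ax_m$ through the ultraproduct construction of Section \ref{ousg}, and then to check that the condition $e^{-2t}\le (p-1)/(r-1)$ is sharp via a standard first-chaos test.

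For sufficiency, suppose $e^{-2t}\le (p-1)/(r-1)$. Any $f\in \Ga_Q\subset \ax_\ux$ admits a uniformly bounded representative $f=(x_m)^\bullet$ with $x_m\in \ax_m$; by the construction of $T_t$ in Section \ref{ousg} we have $T_tf=(T_t^m x_m)^\bullet$. Using the standard ultraproduct identity
\[
\|(y_m)^\bullet\|_{L_p(\ax_\ux,\tau_\ux)}=\lim_{m,\ux}\|y_m\|_{L_p(\ax_m,\tau_m)},
\]
valid for any uniformly bounded representative and any $1\le p<\8$, together with $\|T_t^m\|_{L_p\to L_r}=1$ from Theorem \ref{hypem}, I get
\[
\|T_t f\|_{L_r(\Ga_Q)}=\lim_{m,\ux}\|T_t^m x_m\|_{L_r(\ax_m)}\le \lim_{m,\ux}\|x_m\|_{L_p(\ax_m)}=\|f\|_{L_p(\Ga_Q)}.
\]
Since $\Ga_Q$ is $L_p$-dense in $L_p(\Ga_Q)$ for $p<\8$, $T_t$ extends to a contraction $L_p(\Ga_Q)\to L_r(\Ga_Q)$.

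For necessity I would use the classical first-chaos test. Pick $j\in [N]$ with $q_{jj}<1$ so that $y:=s_j=w(j)$ is bounded in $\Ga_Q$ (the degenerate case $q_{jj}=1$ for all $j$ is the classical Bose/Gaussian model, where Nelson's theorem already supplies the sharp necessity). From Section \ref{ousg} one has $T_ty=e^{-t}y$, while the moment formula \eqref{mome} gives $\tau_Q(y)=0$ and $\tau_Q(y^2)=1$. For small real $\eps$, functional-calculus expansion of $(1+\eps y)^p$ and $(1+\eps e^{-t}y)^r$ yields
\[
\|1+\eps y\|_p=1+\tfrac{p-1}{2}\eps^2+O(\eps^3),\qquad \|T_t(1+\eps y)\|_r=1+\tfrac{r-1}{2}e^{-2t}\eps^2+O(\eps^3),
\]
so the assumed contraction $\|T_t(1+\eps y)\|_r\le \|1+\eps y\|_p$ forces $e^{-2t}(r-1)\le p-1$ after dividing by $\eps^2$ and letting $\eps\to 0$.

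The only mildly delicate point is the $L_p$-ultraproduct norm identity used in the first paragraph; for $L_\8$-bounded representatives it reduces to the defining identity $\tau_\ux=\lim_{m,\ux}\tau_m$ and is by now a standard tool. Once that is in place, the transfer of Biane's theorem is essentially a formal Banach-space argument, which matches the author's own remark that ``with the hard work done in the previous section, it is very easy to prove the following result.''
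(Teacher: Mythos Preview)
Your proof is correct and follows the same ultraproduct-transfer strategy as the paper. The only difference is cosmetic: for sufficiency the paper tests the inequality on finite linear combinations of special Wick words $\sum_{\vec i}\al_{\vec i}w(\vec i)$ (using their explicit $L_p$-bounded but not $L_\infty$-bounded representatives and the inclusion $L_p(\Ga_Q)\subset\prod_{m,\ux}L_p(\ax_m)$), whereas you work directly with $f\in\Ga_Q$ and an $L_\infty$-bounded representative. Your route is marginally cleaner, though it still leans on Section~\ref{ousg} (and hence on the Wick analysis) to know that $T_t$ maps $\Ga_Q$ into itself. For necessity the paper simply cites Biane's argument, which is exactly the first-chaos test you wrote out; your case split on $q_{jj}=1$ is harmless but unnecessary, since $s_j\in\Ga_Q^\infty$ has all moments and the expansion $\|1+\eps s_j\|_p=1+\tfrac{p-1}{2}\eps^2+O(\eps^3)$ holds regardless of whether $s_j$ is bounded.
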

\begin{proof}
  The ``only if'' part follows verbatim Biane's argument \cite{Bi}*{p. 461}. For the converse, since the special Wick words span $L_p(\Ga_Q)$, it suffices to prove that if $e^{-2t}\le \frac{p-1}{r-1}$ then
  \[
  \Big\|T_t\Big(\sum_{\vec{i}} \al_{\vec{i}}w(\vec{i})\Big)\Big\|_r \le \Big\|\sum_{\vec{i}} \al_{\vec{i}}w(\vec{i})\Big\|_p
  \]
  where $\sum_{\vec{i}} \al_{\vec{i}}w(\vec{i})$ is a finite linear combination of special Wick words. But by Theorem \ref{hypem},
   \begin{align*}
   &\Big\|T^m_t\big( \sum_{\vec{i}} \frac{\al_{\vec{i}}}{m^{d(\vec{i})/2}} \sum_{\vec{k}\in[m]^{d(\vec{i})}: \si(\vec{k})\in P_1(d(\vec{i}))} x_{i_1}(k_1)\cdots x_{i_{d(\vec{i})}}(k_{d(\vec{i})})\big) \Big\|_r \\
   &\le \Big\|\sum_{\vec{i}} \frac{\al_{\vec{i}}}{m^{d(\vec{i})/2}} \sum_{\vec{k}\in[m]^{d(\vec{i})}: \si(\vec{k})\in P_1(d(\vec{i}))} x_{i_1}(k_1)\cdots x_{i_{d(\vec{i})}}(k_{d(\vec{i})})\Big\|_p.
   \end{align*}
  Since there is a canonical inclusion $L_p(\Ga_Q)\subset \prod_{m,\ux}L_p(\ax,\tau_m)$, we have
  \begin{align*}
  \Big\|\sum_{\vec{i}} \al_{\vec{i}}w(\vec{i})\Big\|_p = \lim_{m,\ux} \Big\|\sum_{\vec{i}} \frac{\al_{\vec{i}}}{m^{d(\vec{i})/2}} \sum_{\vec{k}\in[m]^{d(\vec{i})}: \si(\vec{k})\in P_1(d(\vec{i}))} x_{i_1}(k_1)\cdots x_{i_{d(\vec{i})}}(k_{d(\vec{i})})\Big\|_p.
  \end{align*}
  Similarly,
  \begin{align*}
    &\Big\|T_t\Big(\sum_{\vec{i}} \al_{\vec{i}}w(\vec{i})\Big)\Big\|_r = \Big\|\sum_{\vec{i}} \al_{\vec{i}}e^{-t|\vec{i}|}w(\vec{i})\Big\|_r\\
    &= \lim_{m,\ux} \Big\|\sum_{\vec{i}} \frac{\al_{\vec{i}}e^{-t|\vec{i}|}}{m^{d(\vec{i})/2}} \sum_{\vec{k}\in[m]^{d(\vec{i})}: \si(\vec{k})\in P_1(d(\vec{i}))} x_{i_1}(k_1)\cdots x_{i_{d(\vec{i})}}(k_{d(\vec{i})})\Big\|_r\\
    &=\lim_{m,\ux}\Big\|T^m_t\big( \sum_{\vec{i}} \frac{\al_{\vec{i}}}{m^{d(\vec{i})/2}} \sum_{\vec{k}\in[m]^{d(\vec{i})}: \si(\vec{k})\in P_1(d(\vec{i}))} x_{i_1}(k_1)\cdots x_{i_{d(\vec{i})}}(k_{d(\vec{i})})\big) \Big\|_r.
  \end{align*}
  The assertion follows immediately.
\end{proof}
This result in particular implies the hypercontractivity results for $\Ga_q(H)$ due to Biane \cite{Bi} and for $\ast \Ga_{-1}(\rz^n)$ obtained in \cite{JPP}. See also \cite{Kr05} for another generalization with the braid relation. Using the standard argument \cite{Bi}, the log-Sobolev inequality follows from optimal hypercontractivity bounds. Recall that $A$ is the number operator associated to $\Ga_Q$.
\begin{cor}[log-Sobolev inequality]
  For any finite linear combination of special Wick words $f=\sum_{\vec{i}}\al_{\vec{i}}w(\vec{i})$,
  \[
  \tau_{Q}(|f|^2\ln |f|^2)-\|f\|_2^2\ln\|f\|_2^2\le 2 \tau_{Q}(f Af^*).
  \]
\end{cor}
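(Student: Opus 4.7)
The plan is to differentiate the optimal hypercontractivity bound of Theorem \ref{hypeu} at $t=0$ in the spirit of Gross's classical equivalence, as executed in \cite{Bi} for $\Gamma_q(H)$. Fix $f=\sum_{\vec i}\alpha_{\vec i} w(\vec i)$, take $p=2$ and set $r(t):=1+e^{2t}$, so that $r(0)=2$, $r'(0)=2$, and the threshold $e^{-2t}=(p-1)/(r(t)-1)$ is saturated. Theorem \ref{hypeu} then gives $\|T_tf\|_{r(t)}\le\|f\|_2$ for all $t\ge0$, so the function
\[
\phi(t):=\|T_tf\|_{r(t)}^2-\|f\|_2^2
\]
satisfies $\phi(0)=0$ and $\phi(t)\le 0$ on $[0,\infty)$. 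Consequently $\phi'(0^+)\le 0$, and the log-Sobolev inequality will fall out of the computation of this derivative.

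To compute $\phi'(0)$, write $u(t):=\tau_Q(|T_tf|^{r(t)})$ and $h(t):=(T_tf)^*(T_tf)$, so that $\phi(t)+\|f\|_2^2=u(t)^{2/r(t)}$ with $u(0)=\|f\|_2^2$. A logarithmic differentiation gives
\[
\phi'(0)=u'(0)-\|f\|_2^2\ln\|f\|_2^2,
\]
and a standard spectral-calculus/Leibniz computation (valid by cyclicity of $\tau_Q$) yields
\[
u'(0)=\tfrac{r(0)}{2}\tau_Q(h'(0))+\tfrac{r'(0)}{2}\tau_Q(|f|^2\ln|f|^2)=\tau_Q(h'(0))+\tau_Q(|f|^2\ln|f|^2).
\]
Since $\frac{d}{dt}\bigl|_0 T_tf=-Af$ and $A=\sum_s sP_s$ commutes with the $*$-operation (because each $H_Q^s$ is $*$-invariant), $h'(0)=-A(f^*)f-f^*A(f)$. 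Cyclicity and the identity $\tau_Q(fA(f^*))=\tau_Q(A(f)f^*)=\langle Af,f\rangle_{L_2(\Gamma_Q)}$ (both sides real and equal via the spectral decomposition of $A$, since $\|P_sf\|_2=\|P_sf^*\|_2$) then give $\tau_Q(h'(0))=-2\tau_Q(fAf^*)$. Substituting and rearranging $\phi'(0)\le 0$ yields exactly
\[
\tau_Q(|f|^2\ln|f|^2)-\|f\|_2^2\ln\|f\|_2^2\le 2\tau_Q(fAf^*).
\]

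The main technical point is to justify the differentiation of $u(t)$ at $t=0$, particularly the $\ln h(t)$ term coming from $r'(0)\ne 0$, since $|f|^2$ need not be invertible. Because $f$ is a finite linear combination of special Wick words, $f\in\bigcap_{p<\infty}L_p(\Gamma_Q)$ and $T_tf$ depends analytically on $t$ in each $L_p$ (indeed, it acts diagonally on the finite sum $\bigoplus_{s\le d}H_Q^s$ containing $f$). One can therefore regularize by replacing $h(t)$ with $h(t)+\varepsilon$, carry out the spectral differentiation on this strictly positive operator where every step is elementary, and pass $\varepsilon\to 0^+$ by dominated convergence in the three resulting trace expressions.
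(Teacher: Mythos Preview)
Your proof is correct and follows exactly the standard Gross differentiation argument that the paper invokes (``Using the standard argument \cite{Bi}, the log-Sobolev inequality follows from optimal hypercontractivity bounds''); the paper gives no further details, and what you have written is precisely that standard argument specialized to $\Gamma_Q$.
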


\subsection{Derivations}\label{s:deri}
Given the $N\times N$ matrix $Q=(q_{ij})$, we define a $2N\times 2N$ matrix
\[
 Q' =Q\otimes {1~1\choose 1~1}=\left(\begin{array}{cc}
Q&Q\\
Q&Q
\end{array}
\right).
\]
Recall that $\ax_{m}(N,\eps)$ is the spin matrix system with $Nm$ generators as in Section \ref{s:spin} and $J_{N,m}=[N]\times [m]$. We can extend the function $\eps$ to define on $J_{2N,m}\times J_{2N,m}$ as follows:
\[
\eps'((i,k),(j,l))=
\begin{cases}
  \eps((i,k),(j,l)),\quad (i,k),(j,l)\in J_{N,m},\\
  \eps((i-N, k), (j, l)),\quad 1\le j<N+1\le i\le 2N,\\
  \eps((i, k), (j-N, l)),\quad 1\le i<N+1\le j\le 2N,\\
  \eps((i-N, k), (j-N, l)),\quad N+1\le i,j\le 2N.
\end{cases}
\]
In other words, $\eps' = \eps\otimes{1~ 1 \choose 1~ 1}$. We may write $\eps$ for $\eps'$ without causing any ambiguity. Now we define a linear map
\begin{align}
  \de: \ax_m (N,\eps)&\to \ax_m(2N,\eps')\label{derim}\\
x_{i_1}(k_1)x_{i_2}(k_2)\cdots x_{i_n}(k_n) &\mapsto \sum_{\al=1}^n x_{i_1}(k_1)\cdots x_{i_{\al-1}}(k_{\al-1}) x_{i_\al+N}(k_\al)x_{i_{\al+1}}(k_{\al+1})\cdots x_{i_n}(k_n),\nonumber
\end{align}
where $x_{i_1}(k_1)x_{i_2}(k_2)\cdots x_{i_n}(k_n) $ is assumed to be in the reduced form. See also \cite{LP99}. It is easy to see that $\de$ is $*$-preserving.
\begin{lemma}
 $\de$ is a derivation, i.e., $\de(\xi\eta)=\de(\xi)\eta+\xi\de(\eta)$ for two words $\xi$ and $\eta$.
\end{lemma}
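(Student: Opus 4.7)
My plan is to prove the derivation property not by directly manipulating the possibly unreduced product $\xi\eta$, but by showing that $\de$ is induced from a genuinely \emph{free} Leibniz extension of the assignment $x_i(k)\mapsto x_{i+N}(k)$, once we verify that this extension respects the defining relations of $\ax_m(N,\eps)$.

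Concretely, first I would extend $\de$ to the free associative algebra $\mathcal{T}$ on generators $x_i(k)$, $(i,k)\in J_{N,m}$, by setting $\widetilde\de(x_i(k))=x_{i+N}(k)$ and declaring $\widetilde\de$ to satisfy the Leibniz rule $\widetilde\de(\xi\eta)=\widetilde\de(\xi)\eta+\xi\widetilde\de(\eta)$ on all (not necessarily reduced) words. By construction $\widetilde\de$ is a derivation on $\mathcal{T}$ with values in the $\ax_m(2N,\eps')$-bimodule obtained from the inclusion $\ax_m(N,\eps)\subset \ax_m(2N,\eps')$. Since any monomial in $\mathcal{T}$ reduces to its reduced form by finitely many applications of the relations $x_i(k)x_j(l)-\eps((i,k),(j,l))x_j(l)x_i(k)=0$ and $x_i(k)^2-1=0$, it is enough to check that $\widetilde\de$ kills the two-sided ideal these relations generate.

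The main point is the compatibility check. For $(i,k)\ne(j,l)$, the Leibniz rule gives
\[
\widetilde\de\bigl(x_i(k)x_j(l)\bigr)=x_{i+N}(k)x_j(l)+x_i(k)x_{j+N}(l),
\]
and by the definition of $\eps'$ we have $\eps'((i+N,k),(j,l))=\eps'((i,k),(j+N,l))=\eps((i,k),(j,l))$. Using these two instances of the commutation relation \emph{inside} $\ax_m(2N,\eps')$, both terms flip with the same sign $\eps((i,k),(j,l))$, which matches $\widetilde\de(\eps((i,k),(j,l))\,x_j(l)x_i(k))$. For the square relation, Leibniz gives $\widetilde\de(x_i(k)^2)=x_{i+N}(k)x_i(k)+x_i(k)x_{i+N}(k)$, and here the case $1\le j<N+1\le i$ of the definition of $\eps'$ yields $\eps'((i+N,k),(i,k))=\eps((i,k),(i,k))=-1$, so the two terms cancel and we get $\widetilde\de(x_i(k)^2)=0=\widetilde\de(1)$. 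Thus $\widetilde\de$ descends to a well-defined linear map $\ax_m(N,\eps)\to \ax_m(2N,\eps')$.

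Finally, on a reduced word $x_{i_1}(k_1)\cdots x_{i_n}(k_n)$ the Leibniz extension $\widetilde\de$ produces exactly the sum in \eqref{derim}, so the descended map coincides with the $\de$ defined in the paper. Since the Leibniz rule is built into $\widetilde\de$ on all words of $\mathcal T$, and the projection to $\ax_m(N,\eps)$ is multiplicative, the identity $\de(\xi\eta)=\de(\xi)\eta+\xi\de(\eta)$ follows for arbitrary reduced words $\xi,\eta$. The only delicate point is the sign bookkeeping in the compatibility check, which hinges on the block structure $\eps'=\eps\otimes\bigl(\begin{smallmatrix}1&1\\1&1\end{smallmatrix}\bigr)$; this is exactly why $Q'$ was chosen with the $2\times 2$ block of $1$'s, so that replacing any index $i$ by $i+N$ does not alter any of the commutation signs except the diagonal one $\eps(i,i)=-1$ needed to make the square relation compatible.
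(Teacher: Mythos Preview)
Your argument is correct and is a genuinely different route from the paper's. The paper proceeds by a direct, hands-on induction: it first observes that the Leibniz rule is immediate when $\xi$ and $\eta$ share no generators, then verifies the base case $\de(\widetilde{a\xi})=\de(a)\xi+a\de(\xi)$ for a single generator $a$ already occurring in a reduced word $\xi$ (by explicitly tracking the sign products when $a$ cancels with $x_{i_{\al_0}}(k_{\al_0})$), inducts to cover the case where $\eta$ is a sub-word of $\xi$, and finally splits a general $\eta=\eta_1\eta_2$ into a part disjoint from $\xi$ and a sub-word of $\xi$. The paper also remarks, without details, that the result follows from $\de$ being the derivative of a one-parameter automorphism group.

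Your approach instead lifts the problem to the free algebra $\mathcal{T}$, builds the derivation there by universal Leibniz extension into the $\mathcal{T}$-bimodule $\ax_m(2N,\eps')$ (with module structure factored through $\ax_m(N,\eps)$), and checks compatibility with the two defining relations. The sign computations you do---$\eps'((i+N,k),(j,l))=\eps'((i,k),(j+N,l))=\eps((i,k),(j,l))$ for the commutation relation, and $\eps'((i+N,k),(i,k))=\eps((i,k),(i,k))=-1$ for the square relation---are exactly right and are the heart of the matter. This is cleaner and more conceptual: the induction and case-splitting in the paper's proof are absorbed into the single observation that a derivation vanishing on the generators of an ideal (with the bimodule structure already factoring through the quotient) vanishes on the whole ideal. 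What the paper's approach buys is that it is entirely self-contained and makes no appeal to quotient-algebra machinery; what yours buys is that the only genuine computation is the two-line compatibility check, and the role of the block structure $\eps'=\eps\otimes\bigl(\begin{smallmatrix}1&1\\1&1\end{smallmatrix}\bigr)$ is made transparent.
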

\begin{proof}
The assertion follows from the fact that $\de$ is the derivative of certain one parameter group of automorphisms; see \cites{LP98, LP99, ELP}. We provide a direct elementary proof here. Note that if $\xi$ and $\eta$ are two reduced words with no common generators, the derivation property follows easily from \eqref{derim}. It remains to verify the derivation property when $\xi$ and $\eta$ have common generators.
Let $\xi=x_{i_1}(k_1)\cdots x_{i_n}(k_n)\in \ax_m(N,\eps)$ be a reduced word and $a$ an arbitrary generator. Assume $a=x_{i(\al_0)}(k(\al_0))$ and write the reduced form of $a\xi$ as $\td{a\xi}$. Then
\[
\td{a\xi}=\eps((i_1,k_1),(i_{\al_0}, k_{\al_0}))\cdots \eps((i_{\al_0-1},k_{\al_0-1}),(i_{\al_0}, k_{\al_0})) x_{i_1}(k_1)\cdots \check x_{i_{\al_0}}(k_{\al_0})\cdots x_{i_n}(k_n),
\]
 where $\check x$ means the generator $x$ is omitted in the expression. We have
 \begin{align*}
     \de(\td{a\xi})=&\eps((i_1,k_1),(i_{\al_0}, k_{\al_0}))\cdots \eps((i_{\al_0-1},k_{\al_0-1}),(i_{\al_0}, k_{\al_0}))\\
     &\sum_{\al=1,\al\neq \al_0}^n x_{i_1}(k_1)\cdots x_{i_{\al-1}}(k_{\al-1}) x_{i_\al+N}(k_\al)x_{i_{\al+1}}(k_{\al+1})\cdots x_{i_n}(k_n).
 \end{align*}
  Here we understand that if $\al=\al_0-1$ then $i_{\al+1}$ is actually $i_{\al+2}$ because $x_{i_{\al_0}}(k_{\al_0})$ is omitted. Similar remark applies when $\al=\al_0+1$ and we will follow this convention to ease notation in this proof. On the other hand,
  \begin{align*}
  &\de(a)\xi+a\de(\xi)\\
  =& x_{i_{\al_0}+N}(k_{\al_0})x_{i_1}(k_1)\cdots x_{i_n}(k_n)+ a x_{i_1}(k_1)\cdots x_{i_{\al_0}+N}(k_{\al_0})\cdots x_{i_n}\\
  &+\prod_{j=1}^{\al_0-1} \eps((i_j,k_j),(i_{\al_0}, k_{\al_0}))
     \sum_{\al=1,\al\neq \al_0}^n x_{i_1}(k_1)\cdots x_{i_{\al-1}}(k_{\al-1}) x_{i_\al+N}(k_\al)x_{i_{\al+1}}(k_{\al+1})\cdots x_{i_n}(k_n)\\
  =&\prod_{j=1}^{\al_0-1} \eps((i_j,k_j),(i_{\al_0}, k_{\al_0})) \sum_{\al=1,\al\neq \al_0}^n x_{i_1}(k_1)\cdots x_{i_{\al-1}}(k_{\al-1}) x_{i_\al+N}(k_\al)x_{i_{\al+1}}(k_{\al+1})\cdots x_{i_n}(k_n).
  \end{align*}
Here we used the commutation relation given by $\eps$ in both equalities. Hence,
\begin{equation}\label{basede}
\de(\td{a\xi})=\de(a)\xi+a\de(\xi).
\end{equation}
Now assume $\de(\td{\eta\xi})=\de(\eta)\xi+\eta\de(\xi)$ where both $\xi$ and $\eta$ are reduced words and the generators of $\eta$ are all in $\xi$, i.e., $\eta$ is a sub-word of $\xi$. We want to show that $\de(\td{a\eta \xi})=\de(\td{a\eta})\xi+a\eta\de(\xi)$ where $a$ is a generator. Note that $\td{a\eta\xi}=\td{a\td{\eta\xi}}$. By \eqref{basede} and the induction hypothesis,
  \begin{align*}
  \de(\td{a\td{\eta\xi}})&=\de(a) \eta\xi+a\de(\td{\eta\xi} )\\
  &= \de(a)\eta\xi+a\de(\eta)\xi+a\eta\de(\xi)=\de(\td{a\eta})\xi+a\eta\de(\xi).
  \end{align*}
The derivation property is verified when $\eta$ is a sub-word of $\xi$. For arbitrary reduced words $\xi$ and $\eta$, using the commutation relation we can write $\eta=\eta_1\eta_2$ so that $\eta_1$ and $\xi$ have no common generators and the generators of $\eta_2$ are in $\xi$. Then
\begin{align*}
\de(\td{\eta\xi})& = \de(\eta_1\td{\eta_2\xi})=\de(\eta_1)\eta_2\xi+\eta_1\de(\td{\eta_2\xi})=\de(\eta_1)\eta_2\xi+\eta_1\de(\eta_2)\xi+\eta_1\eta_2\de(\xi)\\
&=\de(\eta_1\eta_2)\xi + \eta_1\eta_2\de(\xi)=\de(\eta)\xi+\eta\de(\xi).
\end{align*}
The proof is complete.
\end{proof}
This lemma implies in particular that $\de(\xi)$ can be defined by \eqref{derim} and equals $\de(\td{\xi})$ even if $\xi$ is not a reduced word. We will simply write $\de(\xi)$ for any word $\xi$ in the following. If we denote by $A^m$ the number operator associated to the spin system $\ax_m(N,\eps)$, the gradient form is defined as
\[
\Ga^m(f,g) = \frac12[A^m(f^*)g+f^*A^m(g)-A^m(f^*g)]
\]
for $f,g\in \ax_m(N,\eps)$. The superscript $m$ is used to distinguish the operators from their counterparts defined for the limiting algebra $\Ga_Q$. We may simply omit this superscript if there is no ambiguity.
\begin{lemma}\label{exde}
  Let $f,g\in \ax_m(N,\eps)$. Then
  \[
\Ga(f,g)=E(\de(f)^*\de(g))
  \]
  where $E: \ax_m(2N,\eps)\to \ax_m(N,\eps)$ is the conditional expectation satisfying
  $$E(x_B)=\de_{B\cap(\{N+1,\cdots,2N\}\times[m] ), \emptyset}x_B
  $$
  for a reduced word $x_B$.
\end{lemma}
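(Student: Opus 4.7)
The strategy is to reduce the identity to reduced words by sesquilinearity, then compute both sides directly. By linearity of $\de$ and sesquilinearity of both $\Ga(\cdot,\cdot)$ and $E(\de(\cdot)^*\de(\cdot))$, it suffices to verify the identity when $f = x_B$ and $g = x_C$ are reduced words. For such words, $A^m(x_B) = |B|\,x_B$ by the Wick/length grading of the number operator, and commuting factors and applying $x_i(k)^2 = 1$ shows $f^*g = \eta\,x_{B\triangle C}$ for some sign $\eta\in\{\pm 1\}$. Therefore $A^m(f^*g) = |B\triangle C|\,f^*g = (|B|+|C|-2|B\cap C|)f^*g$, whereas $A^m(f^*)g + f^*A^m(g) = (|B|+|C|)f^*g$, yielding
\[
\Ga(f,g) = |B\cap C|\,f^*g.
\]

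For the right-hand side, denote by $f_{(i,k)}$ the word obtained from $f$ by replacing the generator $x_i(k)$ (with $(i,k)\in B$) by the shifted generator $y := x_{i+N}(k)$, and similarly $g_{(j,l)}$. Formula \eqref{derim} gives
\[
\de(f)^*\de(g) \;=\; \sum_{(i,k)\in B}\sum_{(j,l)\in C} f_{(i,k)}^*\,g_{(j,l)}.
\]
If $(i,k)\neq (j,l)$, then the distinct shifted generators $x_{i+N}(k)$ and $x_{j+N}(l)$ each appear exactly once in $f_{(i,k)}^*g_{(j,l)}$ and cannot cancel in the reduced form; hence $E(f_{(i,k)}^*g_{(j,l)}) = 0$ by the definition of $E$. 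For the diagonal contribution $(i,k)=(j,l)\in B\cap C$, write $f = u\,x_i(k)\,v$ and $g = u'\,x_i(k)\,v'$ with $u,v,u',v'$ containing no factor $x_i(k)$, so that
\[
f_{(i,k)}^*\,g_{(i,k)} \;=\; v^*\, y\, u^*u'\, y\, v'.
\]
Commuting one $y$ past $u^*u'$ and using $y^2 = 1$ collapses this expression to $\bigl(\prod_{x_\alpha\text{ in }u^*u'}\eps'((i+N,k),\alpha)\bigr)\,v^*u^*u'v'$; the same manipulation applied directly to $f^*g = v^*x_i(k)\,u^*u'\,x_i(k)\,v'$ yields the identical expression, but with the sign built from $\eps((i,k),\alpha)$. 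By the compatibility $\eps'((i+N,k),(j,l)) = \eps((i,k),(j,l))$ for $j\le N$, the two signs coincide, so $f_{(i,k)}^*\,g_{(i,k)} = f^*g \in \ax_m(N,\eps)$ and $E(f_{(i,k)}^*\,g_{(i,k)}) = f^*g$. Summing over $(i,k)\in B\cap C$ yields $E(\de(f)^*\de(g)) = |B\cap C|\,f^*g$, which matches $\Ga(f,g)$.

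The main bookkeeping obstacle is ensuring that the sign produced by commuting the shifted generator $y = x_{i+N}(k)$ past an intermediate word coincides with the sign produced by commuting the unshifted $x_i(k)$ past the same word. This is exactly the content of the tensor product structure of $\eps'$: by construction, a shifted generator inherits the commutation relations of its unshifted counterpart with every other generator in $\ax_m(N,\eps)$. Once this compatibility is in hand, the identity reduces term by term to the elementary computation above.
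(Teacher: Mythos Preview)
Your proposal is correct and follows essentially the same approach as the paper: reduce by sesquilinearity to reduced words $X_B$ and $X_C$, compute $\Ga(X_B,X_C)=|B\cap C|\,X_B^*X_C$ via the length grading of $A^m$, and show $E(\de(X_B)^*\de(X_C))=|B\cap C|\,X_B^*X_C$ by observing that off-diagonal terms in the double sum are killed by $E$ while each diagonal term equals $X_B^*X_C$ thanks to the compatibility $\eps'((i+N,k),(j,l))=\eps((i,k),(j,l))$. Your explicit factorization $f=u\,x_i(k)\,v$, $g=u'\,x_i(k)\,v'$ makes the sign comparison more transparent than the paper's one-line appeal to ``the extended commutation relation,'' but the argument is the same.
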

\begin{proof}
By linearity, it suffices to check $\Ga(f,g)=E(\de(f)^*\de(g))$ if $f$ and $g$ are reduced words in $\ax_m(N,\eps)$. Let $X_B=x_{i_1}(k_1)\cdots x_{i_n}(k_n), X_C=x_{j_1}(l_1)\cdots x_{j_s}(l_s)$ be two reduced words where $B,C\subset [N]\times [m]$ consist of $(i_\al,k_\al)$ and $(j_\bt,l_\bt)$ respectively. By the derivation property \eqref{derim},
\begin{align*}
  &E(\de(X_B)^*\de(X_C))\\
  &=\sum_{\al=1}^n\sum_{\bt=1}^s E(x_{i_n}(k_n)\cdots x_{i_\al+N}(k_\al)\cdots x_{i_1}(k_1) x_{j_1}(l_1)\cdots x_{j_\bt+N}(l_\bt)\cdots x_{j_s}(l_s)).
\end{align*}
We claim that the only nonzero terms in the above sum are those with $(i_\al,k_\al)=(j_\bt, l_\bt)$. Indeed, the conditional expectation simply computes the trace of generators with subscript greater than $N$ in the reduced form of
$$x_{i_n}(k_n)\cdots x_{i_\al+N}(k_\al)\cdots x_{i_1}(k_1) x_{j_1}(l_1)\cdots x_{j_\bt+N}(l_\bt)\cdots x_{j_s}(l_s).$$
Thus $x_{i_\al+N}(k_\al)$ and $x_{j_\bt+N}(l_\bt)$ have to be the same to cancel out in order to contribute to the sum. It follows that
\begin{equation}\label{ede1}
\begin{aligned}
&E(\de(X_B)^*\de(X_C))\\
&=\sum_{\al,\bt: (i_\al,k_\al)=(j_\bt,l_\bt)} E(x_{i_n}(k_n)\cdots x_{i_\al+N}(k_\al)\cdots x_{i_1}(k_1) x_{j_1}(l_1)\cdots x_{j_\bt+N}(l_\bt)\cdots x_{j_s}(l_s))\\
&=\sum_{\al,\bt: (i_\al,k_\al)=(j_\bt,l_\bt)} x_{i_n}(k_n)\cdots x_{i_\al}(k_\al)\cdots x_{i_1}(k_1) x_{j_1}(l_1)\cdots x_{j_\bt}(l_\bt)\cdots x_{j_s}(l_s).
\end{aligned}
\end{equation}
Here we used the extended commutation relation on $\ax(2N,\eps)$ given by $\eps$ in the last equality. Since $X_B$ and $X_C$ are reduced, given $(i_\al,k_\al)\in B$ there is at most one $(j_\bt,l_\bt)\in C$ such that they are equal, and vice versa. We see that there are $|B\cap C|$ terms in the sum of \eqref{ede1}. Hence, we find
\[
E(\de(X_B)^*\de(X_C)) = |B\cap C| X_{B}^*X_C.
\]
On the other hand,
\begin{align*}
  &\Ga(X_B,X_C)\\
  &=\frac12(A(X_B^*)X_C+X_B^*A(X_C)-A(X_B^*X_C))\\
  &=\frac12(|B|+|C|-|B\triangle C|)x_{i_n}(k_n)\cdots x_{i_\al}(k_\al)\cdots x_{i_1}(k_1) x_{j_1}(l_1)\cdots x_{j_\bt}(l_\bt)\cdots x_{j_s}(l_s).
\end{align*}
Note that we have the same word here as the summand of \eqref{ede1}. Since $2|B\cap C|=|B|+|C|-|B\triangle C|$, we must have
\[
\Ga(X_B,X_C)=|B\cap C|X_B^*X_C =E(\de(X_B)^*\de(X_C)).\qedhere
\]
\end{proof}
Let $w(\vec{i})\in H^s_Q$ be a special Wick word with length $s\in \zz_+$. We define a linear map $\de: H^s_Q\to H^s_{Q'}$
\begin{equation}\label{deriu}
  \de(w(\vec{i}))=(\frac1{m^{s/2}}\sum_{\vec{k}: \si(\vec{k})\in P_1(s)} \de^m [x_{i_1}(k_1)\cdots x_{i_s}(k_s)])^\bullet
\end{equation}
where $\de^m$ is the derivation defined in \eqref{derim}. Here we used Remark \ref{cltrk} implicitly. Note that $\de^m$ is bounded when acting on words with fixed length $s$ although it is not uniformly (in $m$) bounded on $\ax_m$. Hence $\de=(\de^m)^\bullet$ is well-defined on $H_Q^s$. Since $L_2(\Ga_Q)=\oplus_{s=0}^\8 H^s_Q$, we can define $\de$ on each $H^s_Q$ by \eqref{deriu}. By definition, $\de$ is densely defined on $\fx_Q=L_2(\Ga_Q)$ and $\Dom(\de)= \Dom(A)$ can be identified with the linear span of special Wick words with finite length, where $A$ is the number operator on $L_2(\Ga_Q)$. Since each $w(\vec{i})$ is actually in $\Ga_Q^\8$, $\de(w(\vec{i}))$ is in $\Ga_{Q'}^\8$.
\begin{prop}
  $\de:L_2(\Ga_Q)\to L_2(\Ga_{Q'})$ is a closed derivation.
\end{prop}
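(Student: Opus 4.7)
The plan is to handle closedness and the derivation identity separately. For closedness, I would exploit the grading by the number operator: by \eqref{deriu}, $\de$ maps $H_Q^s$ into $H_{Q'}^s$, and since each $H_Q^s$ is finite-dimensional (of dimension at most $N^s$), $\de|_{H_Q^s}$ is automatically a bounded map between finite-dimensional Hilbert spaces. Let $P_s$ and $P_s'$ denote the orthogonal projections onto $H_Q^s$ and $H_{Q'}^s$ respectively. If $f_n\in\Dom(\de)$ with $f_n\to f$ in $L_2(\Ga_Q)$ and $\de(f_n)\to\eta$ in $L_2(\Ga_{Q'})$, then $P_s'\de(f_n)=\de(P_s f_n)$ converges both to $P_s'\eta$ and to $\de(P_s f)$ (using boundedness on $H_Q^s$). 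Hence $\de(P_s f)=P_s'\eta$ for every $s$, and $\sum_s\|\de(P_s f)\|_2^2=\|\eta\|_2^2<\8$ places $f$ in $\Dom(\de)$ with $\de(f)=\eta$.

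For the derivation identity $\de(fg)=\de(f)g+f\de(g)$, it suffices to verify it when $f,g$ are special Wick words: their linear span is a dense $\ast$-subalgebra of $\Ga_Q^\8$ (so products remain in the algebra and in $\Dom(\de)$), and the full claim then follows by bilinearity together with the closedness just established. Writing $f=(\td f_m)^\bullet$, $g=(\td g_m)^\bullet$ via \eqref{wicsp}, the uniform $L_p$ bounds coming from \eqref{mome} (valid for every $p<\8$) allow us to work in the ultraproduct. At the matrix level, the previous lemma supplies
\[
\de^m(\td f_m\td g_m)=\de^m(\td f_m)\td g_m+\td f_m\de^m(\td g_m).
\]
Taking $(\cdot)^\bullet$ of both sides yields the desired identity once we identify $(\de^m(\td f_m\td g_m))^\bullet$ with $\de(fg)$ and the two terms on the right with $\de(f)g$ and $f\de(g)$ respectively; the latter uses the natural inclusion $\Ga_Q\subset\Ga_{Q'}$ coming from the fact that the upper-left $N\times N$ block of $Q'$ is $Q$.

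The main obstacle lies in justifying $(\de^m(\td f_m\td g_m))^\bullet=\de(fg)$. The matrix-level product $\td f_m\td g_m$ is not itself a sum of Wick-type expressions, so one must first decompose it according to the partition type of the joint multi-index and then show, for each fixed partition, that $\de^m$ applied to the partition-restricted sum converges in every $L_p$ of the ultraproduct to $\de$ applied to the corresponding summand in the Wick decomposition of $fg$ given by Theorem \ref{wicdec}. This is exactly the bookkeeping carried out in Lemmas \ref{sinpar} and \ref{remai}: only pair-and-singleton partitions survive in the limit, the singleton blocks assemble into special Wick words via \eqref{wicar}, and because $\de^m$ is a bounded map on each fixed-length component, it commutes with every limit operation in play. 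Once this identification is in place, the derivation property on the generating subalgebra follows, and combined with closedness, completes the proof.
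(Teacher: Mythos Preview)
Your proof is correct and follows the same approach as the paper: closedness via the grading $\de(H_Q^s)\subset H_{Q'}^s$ with each $\de|_{H_Q^s}$ bounded, and the derivation identity by lifting the matrix-level identity $\de^m(\td f_m\td g_m)=\de^m(\td f_m)\td g_m+\td f_m\de^m(\td g_m)$ through the ultraproduct. The paper's own proof is considerably terser---it dispatches the derivation property in one line citing \eqref{deriu}, \eqref{derim}, and Remark~\ref{cltrk}---whereas you spell out the Wick-decomposition bookkeeping; the one ingredient you leave implicit is that the CLT estimates in the enlarged $2N$-generator system require the weaker independence hypothesis of Remark~\ref{cltrk}, since the sign function $\eps'$ on $J_{2N,m}$ is not fully independent but repeats according to $\eps'=\eps\otimes\mathds{1}_2$.
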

\begin{proof}
  Let $P_s: L_2(\Ga_Q)\to H^s_Q$ and $P_s': L_2(\Ga_{Q'})\to H_{Q'}^s$ be the orthogonal projections. Suppose $x_n\in \Dom(\de)$, $\lim_{n\to\8}\|x_n\|_2=0$ and $\lim_{n\to\8}\|\de(x_n)-y\|_2=0$. Then $P_s x_n\to 0$ for each $s\in\zz_+$. It follows that
  \[
  P_s'\de(x_n)=\de(P_s(x_n))\to0 \quad \mbox{as}\quad n\to\8.
  \]
  But $P_s'\de(x_n)\to P_s' y$, we find $P_s' y=0$ and thus $y=0$. Hence $\de$ is closed. The derivation property follows from the definition \eqref{deriu}, \eqref{derim} and Remark \ref{cltrk}.
\end{proof}
Denote by $\ax_\ux(N)$ the von Neumann algebra ultraproduct of $\ax_m(N)$. Then $E=(E^m)^\bullet: \ax_\ux(2N)\to \ax_\ux(N)$ is the canonical conditional expectation, where $E^m: \ax_m(2N)\to\ax_m(N)$ is given in Lemma \ref{exde}. Since $\Ga_Q\subset\ax_\ux(N)$ as a von Neumann subalgebra, there is trace preserving conditional expectation $E:\Ga_{Q'}\to\Ga_Q$ which extends to contractions on $L_p$ for $1\le p<\8$. Recall that $\Ga(\cdot,\cdot)$ is the gradient form associated with the number operator $A$ on $\Ga_Q$.
\begin{prop}\label{exd2}
  Let $f,g\in \Dom(\de)$. Then
  \[
  \Ga(f,g)=E(\de(f)^*\de(g)).
  \]
  \end{prop}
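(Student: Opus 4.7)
Since both $\Ga(f,g)$ and $E(\de(f)^*\de(g))$ are sesquilinear in $(f,g)$ and $\Dom(\de)$ is the linear span of special Wick words, it suffices to verify the identity when $f = w(\vec{i})$ and $g = w(\vec{j})$ with $\vec{i} \in [N]^s$, $\vec{j} \in [N]^t$. Fix matrix representatives
\[
\xi_m = \frac{1}{m^{s/2}} \sum_{\vec{k}:\si(\vec{k}) \in P_1(s)} x_{i_1}(k_1) \cdots x_{i_s}(k_s), \quad \eta_m = \frac{1}{m^{t/2}} \sum_{\vec{l}:\si(\vec{l}) \in P_1(t)} x_{j_1}(l_1) \cdots x_{j_t}(l_t),
\]
so that $(\xi_m)^\bullet = f$, $(\eta_m)^\bullet = g$ and, by \eqref{deriu}, $(\de^m\xi_m)^\bullet = \de(f)$, $(\de^m\eta_m)^\bullet = \de(g)$. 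At each level, Lemma \ref{exde} gives $\Ga^m(\xi_m, \eta_m) = E^m(\de^m(\xi_m)^* \de^m(\eta_m))$, and the task is to take ultraproducts $(\cdot)^\bullet$ on both sides.

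On the right-hand side, since $E = (E^m)^\bullet$ restricted to $\Ga_{Q'}$ and multiplication passes through the ultraproduct on sequences uniformly bounded in every $L_p$, the limit is $E(\de(f)^*\de(g))$. On the left, homogeneity of $\xi_m, \eta_m$ yields $A^m\xi_m = s\xi_m$ and $A^m\eta_m = t\eta_m$, so the first two terms of $\Ga^m(\xi_m, \eta_m)$ combine into $\tfrac{s+t}{2}\xi_m^*\eta_m$, whose ultraproduct is $\tfrac{1}{2}[A(f^*)g + f^*A(g)]$. For the cross term, decompose the matrix-level product $\xi_m^*\eta_m = \sum_{r=|s-t|}^{s+t} \zeta_{m,r}$ into homogeneous Wick pieces of degree $r$ (applying Theorem \ref{wicdec} at finite $m$); then $A^m(\xi_m^*\eta_m) = \sum_r r\,\zeta_{m,r}$, and $(\zeta_{m,r})^\bullet \in H^r_Q$ with $A\,(\zeta_{m,r})^\bullet = r\,(\zeta_{m,r})^\bullet$ by the ultraproduct construction of $T_t$ in Section \ref{ousg}. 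Thus $(A^m(\xi_m^*\eta_m))^\bullet = A(f^*g)$, and combining the three pieces gives $(\Ga^m(\xi_m,\eta_m))^\bullet = \Ga(f,g)$, proving the identity.

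\textbf{Main obstacle.} The delicate point is the identification $(A^m(\xi_m^*\eta_m))^\bullet = A(f^*g)$: because $A$ is unbounded and $\xi_m^*\eta_m$ is of mixed degree, one cannot apply $A$ termwise without verifying that the matrix-level Wick decomposition and its ultraproduct limit are compatible degree by degree. This compatibility is precisely what Section \ref{ousg} provides, where $T_t$ is constructed from the ultraproduct of $(T_t^m)_*$ and shown to agree with it on $\Ga_Q$. Everything else reduces to routine bilinearity arguments and the basic ultraproduct manipulations developed in Section \ref{s:cltul}.
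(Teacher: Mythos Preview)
Your proposal is correct and matches the paper's approach: reduce by sesquilinearity to special Wick words, invoke Lemma \ref{exde} at each finite $m$, then pass to the ultraproduct via $E=(E^m)^\bullet$ and the construction of $T_t$ in Section \ref{ousg}. The only imprecision is that the finite-$m$ decomposition $\xi_m^*\eta_m=\sum_r\zeta_{m,r}$ is not ``Theorem \ref{wicdec} at finite $m$'' but simply the $A^m$-eigenspace decomposition on $\ax_m$ (reduced words have definite length), and its compatibility with $A$ on $\Ga_Q$ is precisely what Section \ref{ousg} supplies, as you correctly note.
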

\begin{proof}
By linearity, it suffices to check the claim for $f=w(\vec{i})$ and $g=w(\vec{i'})$. By the construction of conditional expectation, the proof of Lemma \ref{exde} and the construction of the Ornstein--Uhlenbeck semigroup on $L_2(\Ga_Q)$ in Section \ref{ousg}, we have
\begin{align*}
  &E[\de(w(\vec{i}))^*\de(w(\vec{i'}))]\\
=& \Big(\frac{1}{m^{(d+d')/2}} E^m \big(\de^m\big( \sum_{\vec{k}: \si(\vec{k})\in P_1(d)} x_{i_1}(k_1)\cdots x_{i_{d}}(k_{d})\big)^* \de^m\big(\sum_{\vec{k'}: \si(\vec{k'})\in P_1(d')} x_{i'_1}(k'_1)\cdots x_{i'_{d'}}(k'_{d'})\big)\Big)^\bullet\\
= &\frac12[A(w(\vec{i}^*)w(\vec{i'})+w(\vec{i})^*A(w(\vec{i'})) - A(w(\vec{i})^*w(\vec{i'}))]=\Ga(w(\vec{i}), w(\vec{i'})),
\end{align*}
where $A$ is the number operator on $\Ga_Q$.
\end{proof}
\subsection{Riesz transforms}
Lust-Piquard showed the boundedness of Riesz transforms for the general spin system in \cite{LP98}. Let $T\in\ax_{m}(N,\eps)$ with $\tau_m(T)=0$. Recall that the Riesz transforms $R_j(T)=D_j (A^m)^{-1/2}(T)$ where $D_j$ is the annihilation operator and $A^m=\sum_{j=1}^{Nm} D_j^* D_j$ is the number operator for the spin system $\ax_{m}(N,\eps)$. By Lemma 3.2 and Proposition 1.3 in \cite{LP98}, we have for $1<p<\8$,
\begin{equation}\label{ritr1}
  \td K_{p'}^{-1}\|T\|_p\le \|\sum_{j=1}^{Nm} P_jR_j(T)\|_p\le \td K_p\|T\|_p,
\end{equation}
where $\td K_p=O(p^3/(p-1)^{3/2})$, $\frac1p+\frac1{p'}=1$, and $P_j$ is a certain tensor of Pauli matrices in the general spin system; see \cite{LP98}*{Def. 2.1}. It is known (see \cite{LP99}*{p. 547}) that $\|\sum_{j=1}^{Nm} P_jR_j(T)\|_p=\|\de^m (A^m)^{-1/2}(T)\|_p$, where $\de^m$ is the derivation defined in \eqref{derim}. By considering $T=(A^m)^{1/2}f$, \eqref{ritr1} can be rewritten as
\begin{equation}\label{ritr2}
  \td K_{p'}^{-1}\|(A^m)^{1/2}f\|_p\le \|\de^m(f)\|_p\le \td K_p\|(A^m)^{1/2}f\|_p,
\end{equation}
Now it is easy to recover Lust-Piquard's main result in \cite{LP99}. Recall that $A$ is the number operator on $\Ga_Q$.
\begin{theorem}[Lust-Piquard]\label{lprie}
  Let $1<p<\8$ and $\frac1p+\frac1{p'}=1$. Let $\de$ be defined by \eqref{deriu}. Then for any $f\in \Dom(\de)$,
  \[
  \td K_{p'}^{-1}\|A^{1/2}f\|_p\le \|\de(f)\|_p\le \td K_p\|A^{1/2}f\|_p,
  \]
  where $\td K_p=O(p^3/(p-1)^{3/2})$.
\end{theorem}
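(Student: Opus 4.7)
The plan is to lift the finite-level estimate \eqref{ritr2} of Lust--Piquard through the ultraproduct construction, exactly as was done for hypercontractivity in Theorem \ref{hypeu}. The fact that $\tilde K_p$ in \eqref{ritr2} is independent of $m$ and $N$ is what makes this transference work.

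First I would reduce to a dense subclass. By the closedness of $\delta$ and the fact that $\mathrm{Dom}(\delta)=\mathrm{Dom}(A^{1/2})$ is the $L_2$ closure of the span of special Wick words of finite length, it suffices to prove the inequality for $f=\sum_{\vec{i}}\al_{\vec{i}}w(\vec{i})$, a finite linear combination. Set
\[
f^{(m)} \;=\; \sum_{\vec{i}} \frac{\al_{\vec{i}}}{m^{d(\vec{i})/2}} \sum_{\vec{k}\in[m]^{d(\vec{i})}:\,\si(\vec{k})\in P_1(d(\vec{i}))} x_{i_1}(k_1)\cdots x_{i_{d(\vec{i})}}(k_{d(\vec{i})}) \;\in\; \ax_m(N,\eps),
\]
so that $f=(f^{(m)})^\bullet$ via the canonical inclusion $L_p(\Ga_Q)\subset \prod_{m,\ux} L_p(\ax_m,\tau_m)$, and in particular $\|f\|_p=\lim_{m,\ux}\|f^{(m)}\|_p$ for all $1\le p<\8$.

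Next I would identify the two operators at the matrix level. Each reduced word $x_{i_1}(k_1)\cdots x_{i_s}(k_s)$ with $\si(\vec{k})\in P_1(s)$ is an eigenvector of the spin-system number operator $A^m$ with eigenvalue $s$; hence $(A^m)^{1/2}f^{(m)}$ is a legitimate finite linear combination whose ultraproduct matches the action of $A^{1/2}$ on $\Ga_Q$ term by term, giving
\[
A^{1/2} f \;=\; \bigl((A^m)^{1/2} f^{(m)}\bigr)^\bullet \quad\text{and}\quad \|A^{1/2}f\|_p=\lim_{m,\ux}\|(A^m)^{1/2}f^{(m)}\|_p.
\]
Similarly, \eqref{deriu} combined with the linearity of $\delta^m$ on words of bounded length yields
\[
\de(f) \;=\; \bigl(\de^m(f^{(m)})\bigr)^\bullet \;\in\; L_p(\Ga_{Q'})\subset \prod_{m,\ux} L_p(\ax_m(2N,\eps'),\tau_m),
\]
so that $\|\de(f)\|_p=\lim_{m,\ux}\|\de^m(f^{(m)})\|_p$ (the trace-preserving inclusion $\Ga_{Q'}\hookrightarrow \ax_\ux(2N)$ makes these norms agree).

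Finally I would apply \eqref{ritr2} at each level $m$ to $f^{(m)}$ (with the same constants $\tilde K_p$, $\tilde K_{p'}$ since they do not depend on $m$ or the number of generators) and pass to the limit along $\ux$ on both sides. This yields
\[
\tilde K_{p'}^{-1}\|A^{1/2}f\|_p \;\le\; \|\de(f)\|_p \;\le\; \tilde K_p\|A^{1/2}f\|_p
\]
on the dense domain, and the closedness of $\de$ extends it to all of $\mathrm{Dom}(\de)$. The only mild subtlety I anticipate is the verification that $\de$ intertwines correctly with the ultraproduct in $L_p$ rather than just in $L_2$ (and that the target $L_p(\Ga_{Q'})$ norm coincides with the ultraproduct norm via the trace-preserving inclusion); this should follow from the explicit formula \eqref{deriu} together with the uniform boundedness in $m$ of $\de^m$ restricted to words of fixed length, as used implicitly in Proposition \ref{exd2}.
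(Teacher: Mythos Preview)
Your proposal is correct and follows essentially the same route as the paper's proof: reduce to finite linear combinations of special Wick words, identify $A^{1/2}f=((A^m)^{1/2}f^{(m)})^\bullet$ and $\de(f)=(\de^m(f^{(m)}))^\bullet$ via the ultraproduct, apply \eqref{ritr2} at each level $m$, and pass to the limit along $\ux$. Your write-up is in fact slightly more careful than the paper's about the density reduction and the norm identifications.
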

\begin{proof}
We may assume without loss of generality that $f=\sum_{\vec{i}}\al_{\vec{i}}w(\vec{i})$ is a finite linear combination of special Wick words. Write $w(\vec{i})=(X(\vec{i},m))^\bullet$. Then,
\[
\|\de(f)\|_p=\Big\|\sum_{\vec{i}}\al_{\vec{i}} \de(w(\vec{i}))\Big\|_p =\lim_{m,\ux} \Big\|\sum_{\vec{i}}\al_{\vec{i}} \de^m(X(\vec{i},m))\Big\|_p.
\]
Similarly,
\[
\|A^{1/2}f\|_p = \lim_{m,\ux} \Big\|\sum_{\vec{i}}\al_{\vec{i}} \sqrt{|\vec{i}|}X(\vec{i},m)\Big\|_p =\lim_{m,\ux}\|(A^m)^{1/2}f\|_p.
\]
The assertion follows from \eqref{ritr2} with a limiting procedure.
\end{proof}
In fact, we can give more precise estimates using the gradient form. Let
\[
G_p = L_p\dash\Span \{w(\vec{i}): \vec{i}\in [2N]^s, s\in \nz, 1\le i_k\le N \mbox{ for all but at most one } k \}.
\]
Since $L_p(\Ga_Q)\subset G_p \subset L_p(\Ga_{Q'})$, we have $E:G_p \to L_p(\Ga_Q)$ given by the restriction of the conditional expectation $E:\Ga_{Q'}\to \Ga_Q$.
If $f\in \Ga_{Q'}$, we define $\|f\|_{L_p^c(E)} = \|E(f^*f)^{1/2}\|_p$ and $\|f\|_{L_p^r(E)}=\|f^*\|_{L_p^c(E)}$. The conditional $L_p(\Ga_{Q'})$ space is
\[
L_p^{rc}(E)=\begin{cases}
  L_p^r(E)+L_p^c(E),\mbox{ if } 1\le p\le 2,\\
  L_p^r(E)\cap L_p^c(E), \mbox{ if } 2\le p\le \8.
\end{cases}
\]
Define $G_p^{r}$ (resp. $G_p^c$) as the space of $G_p$ with the norm inherited from $L_p^{r}(E)$ (resp. $L_p^c(E)$). Now we follow \cite{JMP14} to derive a Khintchine type inequality. First, since $E:\Ga_{Q'}\to \Ga_Q$ extends to contractions on $L_p$ for $1\le p<\8$, we have for $f\in L_p(\Ga_{Q'})$ and $2\le p<\8$,
\begin{equation}\label{khin0}
  \max\{\|E(f^*f)^{1/2}\|_p, \|E(ff^*)^{1/2}\|_p\} \le \|f\|_{L_p}.
\end{equation}
This means that $L_p(\Ga_{Q'})\subset L_p^{rc}(E)$ contractively for $2\le p<\8$.
\begin{lemma}\label{khin}
  Let $E:G_p\to L_p(\Ga_Q)$ be as above. Then for $2\le p<\8$,
  \[
  \|f\|_{G_p}\le C\sqrt{p} \max\{\|E(f^*f)^{1/2}\|_p, \|E(ff^*)^{1/2}\|_p\} \le C\sqrt{p}\|f\|_{G_p},
  \]
  and for $1<p\le 2$,
  \[
  \|f\|_{G_p}\le \inf_{\substack{f=g+h\\ g\in G_p^c,h\in G_p^r}} \{\|E(g^*g)^{1/2}\|_p +\|E(hh^*)^{1/2}\|_p\} \le C\sqrt{\frac{p}{p-1}} \|f\|_{G_p}.
  \]
\end{lemma}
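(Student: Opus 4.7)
The right inequality of the $p\ge 2$ estimate is exactly \eqref{khin0}, so for $p\ge 2$ only the left (Khintchine-type) inequality requires a proof. Conditional $L_p$-duality with respect to the trace pairing identifies the dual of $L_p^{rc}(E)$ with $L_{p'}^{rc}(E)$ (sum and intersection swapped), and $G_p$ is reflexive as a closed subspace of $L_p(\Ga_{Q'})$; cf. \cite{JMP14}. Consequently, both inequalities in the $1<p\le 2$ statement follow by duality from the right and left halves of the $p\ge 2$ statement applied to $p'$, with the constant becoming $C\sqrt{p'}=C\sqrt{p/(p-1)}$. The whole lemma thus reduces to proving the single Khintchine upper bound $\|f\|_{G_p}\le C\sqrt{p}\max\{\|E(f^*f)^{1/2}\|_p,\,\|E(ff^*)^{1/2}\|_p\}$ for $p\ge 2$ and $f\in G_p$.

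The strategy is to establish this inequality at the finite-matrix level $\ax_m(2N,\eps)$ with a constant uniform in $m$, and then transfer it to $G_p$ through the ultraproduct, exactly as in the proof of Theorem \ref{hypeu}. Split $f=f_0+f_1$, where $f_0$ collects the special Wick words with $\vec{i}\in [N]^s$ (so $f_0\in L_p(\Ga_Q)$ and $E(f_0^*f_0)=f_0^*f_0$, making the inequality trivial) and $f_1$ collects those with exactly one coordinate in $\{N+1,\ldots,2N\}$. At the $m$-th level, $f_1$ is represented by a finite sum
\[
f_1^{(m)}=\sum_{j=1}^{N}\sum_{k=1}^m a_{j,k}\,x_{N+j}(k)\,b_{j,k},
\]
where $a_{j,k},b_{j,k}\in \ax_m(N,\eps)$ are built from the unprimed generators; the position of the primed generator inside each contributing Wick word and the surrounding coefficients are all absorbed into the pair $(a_{j,k},b_{j,k})$. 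Since the family $\{x_{N+j}(k): j\in[N], k\in[m]\}$ is a spin system of generators (anti-)commuting with $a_{j,k},b_{j,k}$ according to $\eps$, the noncommutative Khintchine inequality of Lust-Piquard--Pisier \cites{LP86,LPP}, equivalently the Burkholder--Gundy inequality of \cite{PX97}, combined with the Pisier free-group technique (Proposition \ref{pbound}), yields
\[
\|f_1^{(m)}\|_p\le C\sqrt{p}\max\Big\{\Big\|\Big(\sum_{j,k} b_{j,k}^*a_{j,k}^*a_{j,k}b_{j,k}\Big)^{1/2}\Big\|_p,\ \Big\|\Big(\sum_{j,k}a_{j,k}b_{j,k}b_{j,k}^*a_{j,k}^*\Big)^{1/2}\Big\|_p\Big\}.
\]
Expanding $(f_1^{(m)})^*f_1^{(m)}$ and using the action of $E^m$ on primed generators (as in the proof of Lemma \ref{exde}), the off-diagonal terms contain unpaired primed generators and are annihilated by $E^m$, leaving $\sum_{j,k}b_{j,k}^*a_{j,k}^*a_{j,k}b_{j,k}=E^m((f_1^{(m)})^*f_1^{(m)})$. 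Passing to the ultraproduct along $\ux$ converts this identity into $E(f_1^*f_1)$, which combined with the trivial bound on $f_0$ yields the claimed estimate.

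The main obstacle is the combinatorial verification that the off-diagonal terms in the expansion of $(f_1^{(m)})^*f_1^{(m)}$ are indeed killed by $E^m$ and that the surviving diagonal matches the column square function above (up to $\eps$-signs that pair up). This is the same bookkeeping present in Lemma \ref{exde}, now at the level of arbitrary (not reduced) Wick words, and is where the technical effort concentrates. Once this matching is established, the $\sqrt{p}$ constant is the universal constant from \cite{PX97}, the ultraproduct passage is routine, and no factor of $N$ enters the final constant because the Khintchine inequality is applied to the joint index set $(j,k)$ rather than inductively in $j$.
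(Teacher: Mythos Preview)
Your approach is genuinely different from the paper's and contains a real gap at its core.

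\textbf{The paper's argument.} The paper does not descend to the spin models $\ax_m$. Instead it works directly in $\Ga_{Q'}$: via second quantization it builds a trace-preserving homomorphism $\pi_n:\Ga_{Q'}\to\Ga_{Q'\otimes\mathds{1}_n}$ sending $w(\vec{i})$ to $n^{-1/2}\sum_{j=1}^n\td\pi_j(w(\vec{i}))$. The crucial structural observation is that for $f\in G_p$ the copies $\td\pi_j(f)$ are \emph{fully independent over} $\Ga_{Q\otimes\mathds{1}_n}$; this is exactly where the ``at most one primed coordinate'' condition is used. The noncommutative Rosenthal inequality then gives a bound with a diagonal term $C p\,n^{1/p-1/2}\|f\|_p$ and a conditioned term $C\sqrt{p}\max\{\|E(f^*f)^{1/2}\|_p,\|E(ff^*)^{1/2}\|_p\}$; sending $n\to\infty$ kills the diagonal term and isolates the $\sqrt{p}$ constant. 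The $1<p\le 2$ case is then obtained not by abstract duality alone, but by showing that the orthogonal projection $P$ onto $G_2$ satisfies $E(Pg^*Pg)\le E(g^*g)$, hence extends to a bounded projection $L_p^{rc}(E)\to G_p$.

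\textbf{The gap in your argument.} After writing $f_1^{(m)}=\sum_{j,k}a_{j,k}\,x_{N+j}(k)\,b_{j,k}$ you need two things: (i) that the column square function $\sum_{j,k}b_{j,k}^*a_{j,k}^*a_{j,k}b_{j,k}$ equals $E^m((f_1^{(m)})^*f_1^{(m)})$, and (ii) a Khintchine inequality with constant $C\sqrt{p}$. Claim (i) is false as stated: when $a_{j,k}$ is not a single monomial, conjugation by $x_{N+j}(k)$ introduces $\eps$-signs that do \emph{not} cancel on the cross terms of $a_{j,k}^*a_{j,k}$, so $x\,a^*a\,x\neq a^*a$. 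This is fixable by first commuting every primed generator to the far left, writing $f_1^{(m)}=\sum_{j,k}x_{N+j}(k)d_{j,k}$; then indeed $E^m((f_1^{(m)})^*f_1^{(m)})=\sum_{j,k}d_{j,k}^*d_{j,k}$ and the row analogue holds. However, claim (ii) is the real problem. The inequalities in \cite{LP86,LPP} require the ``random variables'' to sit in a tensor factor independent of the coefficients; here the $x_{N+j}(k)$ and the $d_{j,k}$ live in the \emph{same} spin algebra and only $\eps$-commute. The martingale route via \cite{PX97} does apply (with the filtration obtained by adding primed generators one at a time) and yields the correct square functions, but the Burkholder--Gundy constant there is $O(p)$, not $O(\sqrt{p})$; your assertion that ``the $\sqrt{p}$ constant is the universal constant from \cite{PX97}'' is incorrect. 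Proposition \ref{pbound} is a H\"older-type estimate and does not help with the constant either. Without an independent source for $\sqrt{p}$ in this mixed-commutation setting, your scheme gives the lemma only with constant $Cp$, and the subsequent duality then also degrades. The paper's $\pi_n$-plus-Rosenthal trick is precisely the device that manufactures the missing $\sqrt{p}$.

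Finally, your duality reduction for $1<p\le 2$ is too quick: the dual of the subspace $G_{p'}\subset L_{p'}(\Ga_{Q'})$ is a quotient, so passing from the $p'\ge 2$ inequality on $G_{p'}$ to the $p\le 2$ inequality on $G_p$ requires a complementation argument (the projection $P$), which the paper carries out explicitly.
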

\begin{proof}
  Let $2\le p<\8$. The right inequality is a special case of \eqref{khin0}. For the left inequality, let $n\in \nz$ and $\vec{i}\in [2N]^s$. For $j=1,\cdots, n$, define
\[
 \phi_j: [2N]^s\to [2N]^{ns}, \quad \phi_j(\vec{i})=\vec{0}\sqcup \cdots \vec{0} \sqcup\vec{i}\sqcup\vec{0}\cdots \sqcup\vec{0},
\]
 where $\vec{i}$ occurs in the $j$-th position. Put $\td\pi_j(w(\vec{i}))=w(\phi_j(\vec{i}))$, where $w(\phi_j(\vec{i}))$ is the special Wick word associated to $\phi_j(\vec{i})$. Define
\[
\pi_n: \Ga_{Q'}\to \Ga_{ Q'\otimes \mathds{1}_n},\quad \pi_n(w(\vec{i}))=\frac1{\sqrt{n}}\sum_{j=1}^n w(\phi_j(\vec{i})).
\]
Here $\mathds{1}_n$ is the $n\times n$ matrix with all entries equal to 1. $\pi_n$ extends to a trace preserving $*$-homomorphism. Alternatively, one may define $\pi_n$ via the second quantization functor as in \cite{LP99}. It is crucial to observe that $\td{\pi}_j(w(\vec{i})), j=1,\cdots, n$ are fully independent over $\Ga_{Q\otimes \mathds{1}_n}$ (see \cite{JZ11}) if $w(\vec{i})\in G_p$. This can be checked from the definition of $E_{\Ga_Q}:\Ga_{Q'}\to \Ga_Q$.
We may assume that $f$ is a finite linear combination of special Wick words in $G_p$. Using the noncommutative Rosenthal inequality \cites{JX08, JZ11}, we have
  \begin{align*}
  \|\pi_n(f)\|_p\le&  \frac{Cp}{\sqrt{n}}\big(\sum_{j=1}^n\|\td\pi_j(f)\|_p^p\big)^{1/p}
  + \frac{C\sqrt{p}}{\sqrt{n}} \max\Big\{\Big\|\Big(\sum_{j=1}^n  E[\td\pi_j(f)^*\td\pi_j(f)]\Big)^{1/2}\Big\|_p,  \\
  &\Big\|\Big(\sum_{j=1}^n  E[\td\pi_j(f)\td\pi_j(f)^*]\Big)^{1/2}\Big\|_p\Big\}.
  \end{align*}
  Here we have extended the conditional expectation $E: \Ga_{Q'}\to \Ga_Q$ to $ E: \Ga_{ Q'\otimes \mathds{1}_n}\to \Ga_{Q\otimes \mathds{1}_n}$. Note that $E[\td\pi_j(f)^*\td\pi_j(f)] =\td{\pi}_j[E(f^*f)]$ and that $\|\td\pi_j(f)\|_p=\|f\|_p$. Sending $n\to \8$ for $2<p<\8$, we have
  \begin{equation}\label{gpmax}
  \|f\|_{G_p}\le C\sqrt{p} \max\{\|E(f^*f)^{1/2}\|_p, \|E(ff^*)^{1/2}\|_p\}.
  \end{equation}
  For the case $1<p<2$, we argue by duality. Define the orthogonal projection $P: L_2^{rc}(E)\to G_2\cap L_2^{rc}(E)$. By orthogonality, for $g\in \Ga_{Q'}$,
  $$E(g^*g) =E(Pg^*Pg)+E(P^\perp g^*P^\perp g)\ge E(Pg^*Pg).
  $$
  Similarly, $E(gg^*)\ge E(PgPg^*)$. Since
  \begin{align*}
  \max\{\|E(Pg^*Pg)^{1/2}\|_p, \|E(PgPg^*)^{1/2}\|_p\}
  \le \max\{\|E(g^*g)^{1/2}\|_p, \|E(gg^*)^{1/2}\|_p\},
  \end{align*}
 we deduce from \eqref{gpmax} that $P$ extends to a bounded projection with norm
  \[
  \|P: L_p^{rc}(E)\to L_p(\Ga_{Q'})\|\le C\sqrt{p}
  \]
   for $2\le p<\8$. For $1<p\le 2$ and $f\in G_2$, since $P^*=P$, we have by duality
  \[
  \|f\|_{L_p^{rc}(E)}=\|Pf\|_{L_p^{rc}(E)}\le C\sqrt{p'}\|f\|_{L_p(\Ga_{Q'})},
  \]
  where $\frac1p+\frac1{p'}=1$. By density, this inequality extends to $f\in G_p$. Note that it suffices to consider the decomposition of $f\in G_p$ in $G_p^c+G_p^r$ when we compute $\|f\|_{L_p^{rc}(E)}$. This gives the right inequality. The left inequality follows from duality and \eqref{khin0}.
\end{proof}
\begin{rem}
  In fact, the above argument also shows that $G_p$ is complemented in $L_p(\Ga_{Q'})$. Morally speaking, $G_p$ is a $\Ga_Q \dash\Ga_Q$ bimodule corresponding to differential forms of order one.
\end{rem}
\begin{cor}
  (a) Let $2\le p<\8$. Then for every $f\in \Dom(A)$,
  \[
  c_p^{-1}\|A^{1/2} f\|_p\le \max\{\|\Ga(f,f)^{1/2}\|_p, \|\Ga(f^*,f^*)^{1/2}\|_p\}\le K_p\|A^{1/2}f\|_p
  \]
  where $c_p=O(p^{2})$ and $K_p=O(p^{3/2})$.\\
(b) Let $1<p\le 2$. Then for every $f\in \Dom(A)$,
\[
K_{p'}^{-1} \|A^{1/2} f\|_p\le \inf_{\substack{\de(f)=g+h\\ g\in G_p^c,h\in G_p^r}} \{\|E(g^*g)^{1/2}\|_p +\|E(hh^*)^{1/2}\|_p\}\le C_p\|A^{1/2} f\|,
\]
where $K_{p'}=O(1/(p-1)^{3/2})$ and $C_p=O(1/(p-1)^2)$.
\end{cor}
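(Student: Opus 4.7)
My plan is to combine Proposition \ref{exd2} (which identifies the gradient form with a conditional expectation), Theorem \ref{lprie} (which controls $\|\de(f)\|_p$ by $\|A^{1/2}f\|_p$), and Lemma \ref{khin} (the Khintchine-type inequality for $G_p$). The bridge between the two sides of the corollary is the elementary identities
\[
\Ga(f,f) = E(\de(f)^*\de(f)), \qquad \Ga(f^*,f^*) = E(\de(f)\de(f)^*),
\]
the first being Proposition \ref{exd2} and the second following from the first together with $\de(f^*)=\de(f)^*$. Hence, for $p\ge 2$,
\[
\max\{\|\Ga(f,f)^{1/2}\|_p,\|\Ga(f^*,f^*)^{1/2}\|_p\} = \|\de(f)\|_{L_p^{rc}(E)},
\]
and for $1<p\le 2$ the infimum in (b) is exactly $\|\de(f)\|_{L_p^{rc}(E)}$. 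Note also that by construction \eqref{derim}, $\de(f)\in G_p$, so Lemma \ref{khin} applies directly to $\de(f)$.

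For part (a) ($p\ge 2$), the upper bound is immediate: \eqref{khin0} gives $\|\de(f)\|_{L_p^{rc}(E)}\le \|\de(f)\|_p$, and Theorem \ref{lprie} bounds this by $\tilde K_p\|A^{1/2}f\|_p$ with $\tilde K_p = O(p^3/(p-1)^{3/2}) = O(p^{3/2})$ for large $p$. For the lower bound, the left inequality of Lemma \ref{khin} gives $\|\de(f)\|_p \le C\sqrt{p}\,\|\de(f)\|_{L_p^{rc}(E)}$, and the lower half of Theorem \ref{lprie} yields $\|A^{1/2}f\|_p \le \tilde K_{p'}\|\de(f)\|_p$ with $p'\le 2$, so $\tilde K_{p'}=O((p')^3/(p'-1)^{3/2}) = O((p-1)^{3/2}) = O(p^{3/2})$. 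Combining, $c_p=O(p^{3/2}\cdot \sqrt{p})=O(p^2)$ as required.

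For part (b) ($1<p\le 2$), the lower bound follows from the left inequality of Lemma \ref{khin}, $\|\de(f)\|_p \le \inf\{\cdots\}$, together with $\|A^{1/2}f\|_p\le \tilde K_{p'}\|\de(f)\|_p$; here $p'\ge 2$, so $\tilde K_{p'}=O((p')^{3/2}) = O(1/(p-1)^{3/2})$, giving $K_{p'}=O(1/(p-1)^{3/2})$. For the upper bound, the right inequality of Lemma \ref{khin} gives $\inf\{\cdots\}\le C\sqrt{p/(p-1)}\,\|\de(f)\|_p$, and the upper half of Theorem \ref{lprie} yields $\|\de(f)\|_p\le \tilde K_p\|A^{1/2}f\|_p = O(1/(p-1)^{3/2})\|A^{1/2}f\|_p$. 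Multiplying, $C_p = O(1/\sqrt{p-1}\cdot 1/(p-1)^{3/2}) = O(1/(p-1)^2)$, as claimed.

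The main obstacle is purely bookkeeping: verifying that $\de(f)$ really lies in $G_p$ (which follows from the explicit formula \eqref{derim} sending each generator $x_i$ to the single generator $x_{i+N}$, so precisely one of the Wick indices is in $\{N+1,\ldots,2N\}$), and tracking the four constants $\tilde K_p$, $\tilde K_{p'}$, and the $\sqrt{p}$, $\sqrt{p/(p-1)}$ factors through the ranges $p\ge 2$ and $p\le 2$ so that the composite blow-up is exactly the stated $O(p^2)$ and $O(1/(p-1)^2)$. No new estimates are needed; the corollary is an assembly of the preceding three results.
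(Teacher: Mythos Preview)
Your proposal is correct and follows essentially the same route as the paper: observe $\de(f)\in G_p$, identify $\Ga(f,f)=E(\de(f)^*\de(f))$ and $\Ga(f^*,f^*)=E(\de(f)\de(f)^*)$ via Proposition \ref{exd2} and $\de(f^*)=\de(f)^*$, then combine Lemma \ref{khin} with Theorem \ref{lprie} and track the constants. Your constant bookkeeping is in fact more explicit than the paper's, which simply records $\td K_{p'}=O(p'^3/(p'-1)^{3/2})=O(p^{3/2})$ for $p\ge 2$ and leaves the remaining cases to the reader.
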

\begin{proof}
Note that $\de(f)\in G_p$ if $f\in \Dom(A)$. Since $E(\de(f)^*\de(f))=\Ga(f,f)$, using Lemma \ref{khin} for $2\le p<\8$, we have
  \[
  \|\de(f)\|_p\le C\sqrt{p}\max\{\|\Ga(f,f)^{1/2}\|_p, \|\Ga(f^*,f^*)^{1/2}\|_p\}\le C\sqrt{p} \|\de(f)\|_p.
  \]
  Now apply Theorem \ref{lprie} to conclude (a). For the constants, $K_p=O(p^{3/2})$ is trivial. Since $\td K_{p'} = O(p'^{3} /(p'-1)^{3/2}) =O(p^{3/2})$, we have $c_p\le O(p^{2})$. Assertion (b) follows similarly using Lemma \ref{khin} and Theorem \ref{lprie}.
\end{proof}
Compared with Theorem \ref{lprie} proved in \cite{LP99}, this result is closer to Lust-Piquard's original formulation of the Riesz transforms on the Walsh system and the fermions given in \cite{LP98}. In particular, we get the exact order of constants as in \cite{LP98}.

\subsection{$L_p$ Poincar\'e inequalities}
It was proved by Efraim and Lust-Piquard in \cite{ELP} that the $L_p$ Poincar\'e inequalities ($2\le p<\8$)
\begin{equation}\label{poin}
  \|f-\tau_m(f)\|_p\le C\sqrt{p}\max\{\|\Ga^m(f,f)^{1/2}\|_p, \Ga^m(f^*,f^*)^{1/2}\|_p\}
\end{equation}
hold for Walsh systems and CAR algebras. In fact, the same proof also works for the general spin matrix system $\ax_m$ with some technical variants as shown in \cite{LP98}. Indeed, {Lemma 6.2--6.5} in \cite{ELP} hold for the general spin systems, from which \eqref{poin} follows. Recall that we denote by $A$ the number operator on $\Ga_Q$.

\begin{theorem}
  Let $2\le p<\8$. Then for every $f\in \Dom(A)$,
  \[
  \|f-\tau_Q(f)\|_p\le C\sqrt{p}\max\{\|\Ga(f,f)^{1/2}\|_p, \Ga(f^*,f^*)^{1/2}\|_p\}.
  \]
\end{theorem}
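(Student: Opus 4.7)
The plan is to transfer the spin matrix system Poincar\'e inequality \eqref{poin} to $\Ga_Q$ via the ultraproduct construction, in exactly the same spirit as the hypercontractivity transfer of Theorem~\ref{hypeu} and the Riesz transform transfer of Theorem~\ref{lprie}. First, by density (and since both sides vanish on constants) it is enough to verify the inequality for a finite linear combination of special Wick words $f=\sum_{\vec i}\al_{\vec i}w(\vec i)$ with $\tau_Q(f)=0$; the latter can be arranged by dropping the empty-partition term. Writing $w(\vec i)=(X(\vec i,m))^\bullet$ with
\[
X(\vec i,m)=\frac{1}{m^{|\vec i|/2}}\sum_{\vec{k}\in[m]^{|\vec i|}:\si(\vec{k})\in P_1(|\vec i|)}x_{i_1}(k_1)\cdots x_{i_{|\vec i|}}(k_{|\vec i|}),
\]
set $f_m=\sum_{\vec i}\al_{\vec i}X(\vec i,m)\in \ax_m$, so that $f=(f_m)^\bullet$ in the ultraproduct and $\tau_m(f_m)=0$ for every $m$ (every term in $f_m$ is a nontrivial reduced word).

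Next I would identify the gradient form at the ultraproduct level. By the construction of $\de$ in \eqref{deriu} together with Proposition~\ref{exd2} and Lemma~\ref{exde}, we have
\[
\Ga(f,f)=E\bigl(\de(f)^*\de(f)\bigr)=\bigl(E^m\bigl(\de^m(f_m)^*\de^m(f_m)\bigr)\bigr)^{\bullet}=\bigl(\Ga^m(f_m,f_m)\bigr)^{\bullet},
\]
viewed as an element of $\cap_{p<\8}L_p$ of the relevant ultraproduct. Consequently, for any $2\le p<\8$,
\[
\|\Ga(f,f)^{1/2}\|_p^{\,p}=\|\Ga(f,f)^{p/2}\|_1=\lim_{m,\ux}\|\Ga^m(f_m,f_m)^{p/2}\|_1=\lim_{m,\ux}\|\Ga^m(f_m,f_m)^{1/2}\|_p^{\,p},
\]
and analogously with $f$, $f_m$ replaced by $f^*$, $f_m^*$. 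Similarly $\|f\|_p=\lim_{m,\ux}\|f_m\|_p$.

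Now I apply the spin-system Poincar\'e inequality \eqref{poin} to each $f_m$:
\[
\|f_m\|_p=\|f_m-\tau_m(f_m)\|_p\le C\sqrt{p}\,\max\bigl\{\|\Ga^m(f_m,f_m)^{1/2}\|_p,\ \|\Ga^m(f_m^*,f_m^*)^{1/2}\|_p\bigr\}.
\]
Passing to the limit along $\ux$ using the three norm identifications above yields
\[
\|f\|_p\le C\sqrt{p}\,\max\bigl\{\|\Ga(f,f)^{1/2}\|_p,\ \|\Ga(f^*,f^*)^{1/2}\|_p\bigr\},
\]
which is the desired inequality once $\tau_Q(f)$ is restored on the left (by applying the argument to $f-\tau_Q(f)$, which has the same gradient form as $f$).

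The only genuine point requiring care, and the step I expect to be the most delicate, is the identification $\Ga(f,f)=(\Ga^m(f_m,f_m))^\bullet$ together with the commutation of $\|\cdot^{1/2}\|_p$ with the ultrafilter limit; since $\Ga^m(f_m,f_m)$ and its ultraproduct lie only in $\cap_{p<\8}L_p$ rather than in the von Neumann algebra, one must work at the $L_{p/2}$ level of the ultraproduct Banach space as done above. Everything else is a direct application of \eqref{poin} and the ultraproduct compatibility of $\tau_\ux$, $E$, and $\de$ already established in Sections~\ref{s:cltul}--\ref{s:deri}.
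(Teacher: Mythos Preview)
Your proposal is correct and follows essentially the same route as the paper's proof: reduce to finite linear combinations of special Wick words, represent $f=(f_m)^\bullet$, identify $\Ga(f,f)=(\Ga^m(f_m,f_m))^\bullet$ via Lemma~\ref{exde} and Proposition~\ref{exd2}, apply the spin-system Poincar\'e inequality~\eqref{poin} at each finite level, and pass to the ultrafilter limit exactly as in Theorems~\ref{hypeu} and~\ref{lprie}. Your additional care about handling $\|\cdot^{1/2}\|_p$ at the $L_{p/2}$ level of the ultraproduct is a welcome elaboration of what the paper leaves implicit.
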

\begin{proof}
  Assume without loss of generality that $f=\sum_{\vec{i}} \al_{\vec{i}}w(\vec{i}) = (f^m)^\bullet$ is a finite linear combination of special Wick words. Note that $E(\de(f)^*\de(f)) =(E^m[\de^m(f^m)\de^m(f^m)])^\bullet$. Then the assertion follows from \eqref{poin} and a limiting procedure as for Theorem \ref{hypeu} and Theorem \ref{lprie} with the help of Lemma \ref{exde} and Proposition \ref{exd2}.
\end{proof}

\section{Strong solidity}
\subsection{CCAP}
Let $\Ga_q(H)$ be the $q$-Gaussian von Neumann algebra associated to a real Hilbert space $H$ with $\dim H\ge 2$; see e.g. \cite{BKS} for more information on $\Ga_q(H)$. Avsec showed that $\Ga_q(H)$ for $-1<q<1$ has the weak* completely contractive approximation property (w*CCAP) in \cite{Av}. In particular, $\Ga_q(H)$ is weakly amenable. Our goal here is to prove that $\Ga_Q$ also has w*CCAP if $\max_{1\le i,j\le N} |q_{ij}|<1$. Our argument is based on Avsec's result.

Assume $\max_{i,j} |q_{ij}|<1$. We may find $q$ such that $\max_{i,j} |q_{ij}|<q <1$. Let $Q=q \td Q$, where $\td Q=(\td q_{ij})$ satisfies $\max_{i,j}|\td q_{ij}|<1$. For $h\in H$, let $c^q(h)$ and $(c^q)^*(h)$ be the creation and annihilation operators, respectively, acting on the $q$-Fock space $\fx_q(H)$, where $\dim H = N$. We write the $q$-Gaussian variables as $s^q(h)=c^q(h)+(c^q)^*(h)$. In particular, for an orthonormal basis (o.n.b.) $(e_j)$ of $H$, we write $s_j^q=s^q(e_j)$. Similarly, we write $s^Q(h)=c^Q(h)+(c^Q)^*(h)$ for the mixed $q$-Gaussian variables of $\Ga_Q$; see \cite{LP99}. In particular, $s_j^Q=s^Q(e_j)$. We write $x_{i,j}=s^{\td Q\otimes \mathds{1}_n}(f_i\otimes e_j)$, where $(f_i)$ is an o.n.b. of $\ell_2^N$ and $(e_j)$ is an o.n.b. of $\ell_2^n$. Clearly, $x_{i,j}$'s generate $\Ga_{\td Q\otimes\mathds{1}_n}$. We first construct an ``approximate co-multiplication'' for $\Ga_Q$.
\begin{prop}\label{isom2}
  Let $\pi_\ux: \Ga_Q\to \prod_{m,\ux} \Ga_{q}(\ell_2^m)\bar\otimes \Ga_{\td Q\otimes \mathds{1}_m}$ be a $*$-homomorphism given by
  \[
  \pi_\ux(s^Q_i)=\Big( \frac1{\sqrt{m}}\sum_{k=1}^m s_k^q \otimes x_{i,k}\Big)^\bullet.
  \]
  Then $\pi_\ux$ is trace preserving. Therefore, $\Ga_Q$ is isomorphic to the von Neumann algebra generated by $\pi_\ux(s^Q_i)$.
\end{prop}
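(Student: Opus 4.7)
The plan is to verify that $\pi_\ux$ preserves all mixed moments, and then invoke the uniqueness of the von Neumann algebra determined by prescribed moments (as recalled from \cite{Ju06}*{Section 6} in Section \ref{s:cltul}). First I would check that the sequence $\bigl(\frac1{\sqrt{m}}\sum_{k=1}^m s_k^q\otimes x_{i,k}\bigr)_m$ is norm bounded so that it defines an element of the ultraproduct: since each $x_{i,k}$ is a generator of $\Ga_{\td Q\otimes \mathds{1}_m}$ with $\max_{i,j}|\td q_{ij}|<1$, one has a uniform bound $\|x_{i,k}\|\le C$, so $\bigl\|\frac1m\sum_k x_{i,k}^*x_{i,k}\bigr\|\le C^2$, and Haagerup's (or Bo\.zejko's) operator-valued inequality for $q$-Gaussians gives $\bigl\|\frac1{\sqrt m}\sum_k s_k^q\otimes x_{i,k}\bigr\|\le \frac{2C}{\sqrt{1-|q|}}$ uniformly in $m$.

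The heart of the proof is a moment calculation. Expanding the product and using that $\tau = \tau_q\otimes \tau_{\td Q\otimes \mathds 1_m}$ factors on the tensor product, one gets
\[
\tau\bigl(\pi_\ux(s^Q_{i_1})\cdots \pi_\ux(s^Q_{i_d})\bigr) = \lim_{m,\ux}\frac1{m^{d/2}}\sum_{\vec k\in[m]^d}\tau_q(s^q_{k_1}\cdots s^q_{k_d})\,\tau_{\td Q\otimes\mathds1_m}(x_{i_1,k_1}\cdots x_{i_d,k_d}).
\]
Applying Wick's formula to the first factor and \eqref{mome} to the second, and noting that $\pi\le \si((\vec i,\vec k))$ is equivalent to $\pi\le\si(\vec i)$ and $\pi\le\si(\vec k)$, the double sum becomes
\[
\sum_{\si,\pi\in P_2(d),\,\pi\le \si(\vec i)} q^{|I(\si)|}\!\!\prod_{\{r,t\}\in I(\pi)}\!\!\td q(i_{e_r},i_{e_t})\cdot\frac{\#\{\vec k:\si(\vec k)\ge \si\vee\pi\}}{m^{d/2}}.
\]
Since $\#\{\vec k:\si(\vec k)\ge\si\vee\pi\}=m^{|\si\vee\pi|}$ and both $\si,\pi$ are pair partitions with $d/2$ blocks, the ratio tends to $1$ when $\si\vee\pi$ also has $d/2$ blocks, i.e.\ when $\si=\pi$, and to $0$ otherwise. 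Using $q\cdot\td q(i,j)=q(i,j)$ (since $Q=q\td Q$) the limit reduces to
\[
\sum_{\si\in P_2(d),\,\si\le \si(\vec i)}\prod_{\{r,t\}\in I(\si)}q(i_{e_r},i_{e_t}),
\]
which is exactly $\tau_Q(s^Q_{i_1}\cdots s^Q_{i_d})$ by \eqref{mome}.

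Once moments match on all words in the generators, the $*$-algebra generated by the $\pi_\ux(s^Q_i)$'s carries exactly the same trace as the $*$-algebra generated by the $s^Q_i$'s, and by the uniqueness result already invoked in Section \ref{s:cltul} the map $s^Q_i\mapsto \pi_\ux(s^Q_i)$ extends to a normal trace-preserving $*$-isomorphism from $\Ga_Q$ onto the von Neumann algebra generated by the $\pi_\ux(s^Q_i)$'s inside $\prod_{m,\ux}\Ga_q(\ell_2^m)\bar\otimes\Ga_{\td Q\otimes\mathds 1_m}$. The only delicate point is the combinatorial bookkeeping in the moment identity: one must be careful that the two partitions $\si,\pi$ arising from the $q$-factor and the $\td Q$-factor interact only through the joint refinement $\si\vee\pi$, and that the precise factor $q\cdot\td q=q$ is exactly what is needed to glue the two Wick formulas into Speicher's formula for $\Ga_Q$.
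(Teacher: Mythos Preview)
Your argument is correct and is essentially the same as the paper's: both compute the joint moments by applying the moment formula \eqref{mome} to each tensor factor and then count. The only difference is bookkeeping --- the paper organizes the sum over $\vec{k}$ by the single partition $\si(\vec{k})$ (so that, for $\si(\vec{k})=\si\in P_2(d)$, the $q$-factor and the $\td Q$-factor each collapse to the single term $\si$), whereas you keep two pair partitions $\si,\pi$ from the two Wick expansions and then observe that only the diagonal $\si=\pi$ survives the limit via $|\si\vee\pi|=d/2\iff\si=\pi$. Your version also adds the uniform norm bound on $\frac1{\sqrt m}\sum_k s_k^q\otimes x_{i,k}$, which the paper leaves implicit.
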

\begin{proof}
  Let $d$ be an even integer. By  the moment formula \eqref{mome},
  \begin{align*}
  &\sum_{\vec{k}\in[m]^d}\tau_q\otimes \tau_{\td Q\otimes \mathds{1}_m}[ (s^q_{k_1}\cdots s^q_{k_d})\otimes (x_{i_1,k_1}\cdots x_{i_d,k_d})]\\
  =&\sum_{\si\in P_2(d),\si\le \si(\vec{i})}\sum_{\si(\vec{k})=\si} q^{\# I(\si)}\prod_{\{r,t\}\in I(\si)}\td q(i(e_r),i(e_t))\\
  =& \sum_{\si\in P_2(d),\si\le \si(\vec{i})}\sum_{\si(\vec{k})=\si} \prod_{\{r,t\}\in I(\si)}q(i(e_r),i(e_t)),
  \end{align*}
  where $I(\si)$ is the set of inversions for the partition $\si$. Counting the number of $\vec{k}$ with $\si(\vec{k})=\si$, we have
  \[
  \tau_\ux\Big(\frac1{m^{d/2}}\sum_{\vec{k}\in[m]^d}(s^q_{k_1}\cdots s^q_{k_d}) \otimes (x_{i_1,k_1}\cdots x_{i_d,k_d}) \Big)^\bullet =\sum_{\si\in P_2(d),\si\le \si(\vec{i})}\prod_{\{r,t\}\in I(\si)}q(i(e_r),i(e_t)).
  \]
This coincides with $\tau_Q(s^Q_{i_1}\cdots s^Q_{i_d})$ given by \eqref{mome}.
\end{proof}
Now we want to understand the image of Wick words of $\Ga_Q$ under $\pi_\ux$. We need a Wick word decomposition result similar to Theorem \ref{wicdec}. For $\vec{i}\in [N]^d$, we define
\begin{equation}\label{swic2}
w^s(\vec{i})= \Big(\frac1{m^{d/2}}\sum_{\vec{k}: \si(\vec{k})\in P_1(d)} (s^q_{k_1}\cdots s^q_{k_d})\otimes (x_{i_1,k_1}\cdots x_{i_d,k_d})\Big)^\bullet.
\end{equation}
\begin{prop}\label{wicdec2}
  Following the notation of Proposition \ref{isom2}, we have
  \[
  \pi_{\ux}(s_{i_1}^Q\cdots s_{i_d}^Q) = \sum_{\substack{\si\in P_{1,2}(d)\\ \si\le \si(\vec{i})}}  w^s_\si(\vec{i}).
  \]
  Here $w^s_\si(\vec{i})= f_\si(\vec{i}) w^s(\vec{i}_{np})$,  $f_\si(\vec{i})$ and $\vec{i}_{np}$ are the same as those in Proposition \ref{inner}.
\end{prop}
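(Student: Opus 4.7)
My plan is to adapt the argument of Theorem \ref{wicdec} (through Lemmas \ref{sinpar} and \ref{remai}) to the tensor-product ultraproduct $\prod_{m,\ux}\Ga_q(\ell_2^m)\bar\otimes\Ga_{\td Q\otimes\mathds{1}_m}$. By the $*$-homomorphism property established in Proposition \ref{isom2},
\[
\pi_\ux(s^Q_{i_1}\cdots s^Q_{i_d})=\Big(\frac{1}{m^{d/2}}\sum_{\vec{k}\in[m]^d}(s^q_{k_1}\cdots s^q_{k_d})\otimes(x_{i_1,k_1}\cdots x_{i_d,k_d})\Big)^\bullet,
\]
and I would split the sum according to $\si=\si(\vec{k})$. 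For $\si\in P_r(d)$ (a block of size $\ge 3$), the analogue of Lemma \ref{remai} applies: Pisier's multi-index estimate (Proposition \ref{pbound}), applied factorwise on the tensor product using that both factors admit spin-matrix model realizations with uniformly bounded moments for fixed-length Wick words, gives a decay factor since the number of $\vec{k}$'s with $\si(\vec{k})=\si$ is only $m^{|\si|}$ while the normalization is $m^{-d/2}$.

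For $\si\in P_{1,2}(d)$ with $s$ singletons and $u$ pairs, I would mimic Lemma \ref{sinpar} via the noncommutative Burkholder--Gundy inequality and random selectors to reduce the partial sum, up to an error vanishing in the ultralimit, to its conditional expectation onto the subalgebra generated by the singleton-indexed factors. The crucial computation is then the contraction of pair blocks: for each pair $\{a,b\}$ in $\si$ with $k_a=k_b=k$, the pair $s^q_k\otimes x_{i_a,k}$ has to be moved past intervening factors to meet $s^q_k\otimes x_{i_b,k}$. By the mixed $q$-Gaussian moment formula \eqref{mome} applied to the $\Ga_{\td Q\otimes\mathds{1}_m}$-factor, the resulting conditional expectation of $x_{i_a,k}x_{i_b,k}$ is nonzero only if $i_a=i_b$, forcing $\si\le \si(\vec{i})$. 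When this holds, each crossing of a pair with another pair or a singleton produces, at the level of moments, a factor of $q$ from the $q$-commutation of the $s^q$'s combined with a factor of $\td q(i(e_r),i(e_t))$ from the $\td Q$-commutation of the $x$'s, yielding $q(i(e_r),i(e_t))$ and hence assembling to $f_\si(\vec{i})$ as defined in \eqref{qppr}. After the pair contractions, the surviving factors form precisely $w^s(\vec{i}_{np})$ according to \eqref{swic2}, giving the identification $w^s_\si(\vec{i})=f_\si(\vec{i})w^s(\vec{i}_{np})$.

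The main obstacle is Step 3 above, the clean tracking of the commutation-induced scalar factors in the tensor product, because neither $s^q_k$ nor $x_{i,k}$ satisfies a strict operator-level commutation relation; the factors $q$ and $\td q$ appear only in the Speicher-CLT/distributional sense. To circumvent this, I would work at the matrix-model level, realizing both $\Ga_q(\ell_2^m)$ and $\Ga_{\td Q\otimes\mathds{1}_m}$ via jointly constructed independent spin systems with sign-distributions as in \eqref{qprob}, applying Proposition \ref{algor} to the tensor product of the two spin models to produce exact random-sign commutation factors, and then passing to the ultralimit in $m$ so that the expectations of the sign products converge to $q\cdot \td q(i(e_r),i(e_t))=q(i(e_r),i(e_t))$ by independence. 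A Borel--Cantelli argument analogous to that in Proposition \ref{swinn} then secures almost sure convergence and yields the claimed identity.
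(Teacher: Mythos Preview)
Your first step --- adapting Lemmas \ref{sinpar} and \ref{remai} verbatim to the tensor product to obtain
\[
w^s_\si(\vec{i})=\Big(\frac{1}{m^{d/2}}\sum_{\vec{k}:\si(\vec{k})=\si}E_{\nx_s(\vec{k})}\big[(s^q_{k_1}\cdots s^q_{k_d})\otimes(x_{i_1,k_1}\cdots x_{i_d,k_d})\big]\Big)^\bullet
\]
--- matches the paper exactly. The divergence is in how you then evaluate this conditional expectation. You propose descending to a further spin-model approximation of $\Ga_q(\ell_2^m)\bar\otimes\Ga_{\td Q\otimes\mathds{1}_m}$ so that Proposition \ref{algor} becomes available, then taking a second ultralimit (in a new parameter) with a Borel--Cantelli argument. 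This can be made to work but introduces a nested ultraproduct layer and the attendant bookkeeping of two limits.

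The paper avoids this entirely by computing the conditional expectation directly at the $\Ga_q\bar\otimes\Ga_{\td Q\otimes\mathds{1}_m}$ level. The trick is to factor through two intermediate subalgebras: $\nx^1_s(\vec{k})$ generated by $s^q_{k_\al}\otimes\Ga_{\td Q\otimes\mathds{1}_m}$ and $\nx^2_s(\vec{k})$ generated by $\Ga_q(\ell_2^m)\otimes x_{i_\al,k_\al}$ (for $k_\al$ in singleton blocks), so that $E_{\nx_s(\vec{k})}=E_{\nx_s(\vec{k})}E_{\nx^1_s(\vec{k})}E_{\nx^2_s(\vec{k})}$. Each of $E_{\nx^1_s}$, $E_{\nx^2_s}$ acts on a pure tensor factor, and can be computed by testing against monomials using the moment formula \eqref{mome} --- no operator-level commutation is needed, only the distributional identity. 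This yields $E_{\nx^2_s}(1\otimes x_{i_1,k_1}\cdots x_{i_d,k_d})=f_{\si,\td Q}(\vec{i})\cdot 1\otimes x_{j_1,l_1}\cdots x_{j_s,l_s}$ and $E_{\nx^1_s}(s^q_{k_1}\cdots s^q_{k_d}\otimes 1)=q^{\#I_p(\si)+\#I_{sp}(\si)}s^q_{l_1}\cdots s^q_{l_s}\otimes 1$, whose product is $f_\si(\vec{i})$. Your ``main obstacle'' thus dissolves: the moment formula already encodes the commutation factors, so no auxiliary spin realization is required.
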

\begin{proof}
  Following verbatim the argument for Theorem \ref{wicdec}, we have
  \[
  \pi_{\ux}(s_{i_1}^Q\cdots s_{i_d}^Q) =\sum_{\substack{\si\in P_{1,2}(d)}}  w^s_\si(\vec{i}).
  \]
  Here we have
  \[
  w^s_\si(\vec{i})= \Big( \frac1{m^{d/2}}\sum_{\substack{\vec{k}\in[m]^d: \si(\vec{k})= \si}} E_{\nx_{s}(\vec{k})}[(s^q_{k_1}\cdots s^q_{k_d})\otimes (x_{i_1,k_1}\cdots x_{i_d,k_d}) ]\Big)^{\bullet},
  \]
  and $\nx_{s}(\vec{k})$ is the von Neumann algebra generated by all $s^q_{k_\al}\otimes x_{i_\al,k_\al}$'s, where $k_\al$'s correspond to singleton blocks in $\vec{k}$. To simplify the conditional expectation in the ultraproduct, we denote by $\nx^1_s(\vec{k})$ and $\nx^2_s(\vec{k})$ the von Neumann algebras generated by $s_{k_\al}^q\otimes \Ga_{\td Q\otimes \mathds{1}_m}$'s and $\Ga_q(\ell_2^m)\otimes x_{i_\al,k_\al}$'s, respectively, where $k_\al$'s correspond to singleton blocks in $\vec{k}$. Clearly, $\nx_s(\vec{k})\subset \nx_s^1(\vec{k})\cap \nx_s^2(\vec{k})$. We claim that
  \begin{equation}\label{cond2}
  E_{\nx^2_s(\vec{k})}(1\otimes (x_{i_1,k_1}\cdots x_{i_d,k_d})) =
   \begin{cases}
   f_{\si,\td Q}(\vec{i}) 1\otimes (x_{j_1,l_1}\cdots x_{j_s,l_s}), \mbox{ if } \si=\si(\vec{k})\le \si(\vec{i}),\\
   0, \mbox{ otherwise,}
   \end{cases}
  \end{equation}
  where $(l_1,\cdots, l_s)$ is obtained by deleting pair blocks in $\vec{k}$, which also gives the corresponding $(j_1,\cdots, j_s)$, and
  \[
  f_{\si,\td Q}(\vec{i})=\prod_{\{r,t\}\in I_p(\si)}\td q(i(e_r),i(e_t)) \prod_{\{r,t\}\in I_{sp}(\si)}\td q(i(e_r),i(e_t)).
  \]
  Unlike the matrix models, $x_{i_\al,k_\al}$'s do not have commutation relations. We check \eqref{cond2} by calculating the inner product of $E_{\nx^2_s(\vec{k})}(1\otimes x_{i_1,k_1}\cdots x_{i_d,k_d})$ and monomials generated by $1\otimes x_{i_{\al}, k_\al}$'s in $\nx^2_s(\vec{k})$. Let $1\otimes x_{i'_1,k'_1}\cdots x_{i'_n,k'_n}\in \nx^2_s(\vec{k})$ be a monomial. Since $E_{\nx^2_s(\vec{k})}$ is trace preserving, by the moment formula \eqref{mome} for mixed $q$-Gaussian algebras,
  \begin{align*}
  &\tau_{\td Q\otimes \mathds{1}_m}[x_{i'_n,k'_n}\cdots x_{i'_1,k'_1} E_{\nx^2_s(\vec{k})}(x_{i_1,k_1}\cdots x_{i_d,k_d})] \\
  &= \begin{cases}
    f_{\si,\td Q}(\vec{i})\tau_{\td Q\otimes \mathds{1}_m} (x_{i'_n,k'_n}\cdots x_{i'_1,k'_1} x_{j_1,l_1}\cdots x_{j_s,l_s}), \quad\mbox{if  } \si=\si(\vec{k})\le \si(\vec{i}),\\
    0,\quad \mbox{otherwise.}
  \end{cases}
  \end{align*}
  Hence \eqref{cond2} is verified. Similarly, it can be checked that
  \[
  E_{\nx^1_s(\vec{k})}((s^q_{k_1}\cdots s^q_{k_d})\otimes 1)= q^{\# I_p(\si)+\# I_{sp}(\si)} s_{l_1}^q\cdots s_{l_s}^q.
  \]
  Note that $f_{\si}(\vec{i})=q^{\# I_p(\si)+\# I_{sp}(\si)}f_{\si,\td Q}(\vec{i})$. The assertion follows from the fact that $E_{\nx_s(\vec{k})}=E_{\nx_s(\vec{k})}E_{\nx^1_s(\vec{k})}E_{\nx^2_s(\vec{k})}$.
\end{proof}
\begin{prop}\label{isom3}
  $\pi_\ux$ extends to an isomorphism between $L_2(\Ga_Q)$ and $L_2\dash\Span\{w^s(\vec{i}): \vec{i}\in [N]^d, d\in \zz_+\}$.
\end{prop}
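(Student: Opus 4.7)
The plan is to combine three ingredients already in place: that $\pi_\ux$ is trace preserving (Proposition \ref{isom2}), the explicit Wick decomposition formula on the target side (Proposition \ref{wicdec2}), and the description of $L_2(\Ga_Q)$ as the closed span of special Wick words (Proposition \ref{isom1}). Together these will give both an isometric embedding and density of the image in $L_2\dash\Span\{w^s(\vec{i})\}$.

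First, since $\pi_\ux$ is a trace-preserving $*$-homomorphism from $\Ga_Q$ into the (finite) ultraproduct von Neumann algebra, it automatically extends to an $L_2$-isometry, which I still denote by $\pi_\ux: L_2(\Ga_Q) \to L_2(\prod_{m,\ux}\Ga_q(\ell_2^m)\bar\otimes\Ga_{\td Q\otimes \mathds 1_m})$. Next, by Proposition \ref{wicdec2},
\[
\pi_\ux(s_{i_1}^Q\cdots s_{i_d}^Q)=\sum_{\substack{\si\in P_{1,2}(d)\\ \si\le \si(\vec i)}} f_\si(\vec i)\, w^s(\vec i_{np}),
\]
so the image of any polynomial in the $s^Q_i$'s lies in $\Span\{w^s(\vec j)\}$. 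Since polynomials in $s^Q_i$'s are $L_2$-dense in $\Ga_Q\supset L_2(\Ga_Q)$ (by Proposition \ref{isom1}, which identifies $L_2(\Ga_Q)$ with the closed span of $w(\vec i)$'s, each of which is a strong limit of polynomials in the $s^Q_i$'s), passing to $L_2$-limits shows $\pi_\ux(L_2(\Ga_Q))\subset L_2\dash\Span\{w^s(\vec i)\}$.

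It remains to show surjectivity onto $L_2\dash\Span\{w^s(\vec i):\vec i\in[N]^s,\,s\in\zz_+\}$, for which I would run the same induction on length as in the proof of Proposition \ref{isom1}. The base cases $|\vec i|=0,1$ are immediate from the definitions (noting $\pi_\ux(1)=1$ and $\pi_\ux(s^Q_i)=w^s(i)$). For the inductive step, fix $\vec i\in[N]^d$ and isolate the term in the displayed formula above corresponding to the finest partition $\si_0\in P_1(d)$: one has $\si_0\le \si(\vec i)$ always, $I_p(\si_0)=I_{sp}(\si_0)=\emptyset$ so $f_{\si_0}(\vec i)=1$, and $\vec i_{np}=\vec i$. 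Thus
\[
w^s(\vec i)=\pi_\ux(s_{i_1}^Q\cdots s_{i_d}^Q)-\sum_{\substack{\si\in P_{1,2}(d),\,\si\le \si(\vec i)\\ \si\ne \si_0}} f_\si(\vec i)\, w^s(\vec i_{np}).
\]
For every $\si\ne \si_0$ contributing to the sum, $\vec i_{np}$ has strictly fewer coordinates than $\vec i$, so by the induction hypothesis each $w^s(\vec i_{np})$ lies in the range of $\pi_\ux$. Hence $w^s(\vec i)$ lies in the range as well, closing the induction.

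The argument is essentially a triangular elimination, and the only real input beyond formal bookkeeping is Proposition \ref{wicdec2}, which has already been proved by transplanting the Wick decomposition argument (Theorem \ref{wicdec} and Corollary \ref{ident}) to the tensor product model; no further analytic difficulty is expected here. Combining the $L_2$-isometry property with the density just established, $\pi_\ux$ is an isometric isomorphism between $L_2(\Ga_Q)$ and $L_2\dash\Span\{w^s(\vec i):\vec i\in[N]^d,\,d\in\zz_+\}$, as claimed.
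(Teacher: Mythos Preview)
Your proposal is correct and follows essentially the same approach as the paper: use Proposition \ref{wicdec2} to get $\pi_\ux(L_2(\Ga_Q))\subset L_2\dash\Span\{w^s(\vec i)\}$, and then run the induction-on-length argument from Proposition \ref{isom1} (isolating the $\si_0\in P_1(d)$ term) to obtain the reverse inclusion. The paper's own proof is a two-line sketch pointing to exactly these two ingredients; you have simply written out the details (and explicitly noted the $L_2$-isometry coming from trace preservation), with one small slip of the pen: the inclusion should read $\Ga_Q\subset L_2(\Ga_Q)$, not the reverse.
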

\begin{proof}
  Put $H_W= L_2\dash\Span\{w^s(\vec{i}): \vec{i}\in [N]^d, d\in \zz_+\}$. By Proposition \ref{wicdec2}, we know that $\pi_\ux(L_2(\Ga_Q))\subset H_W$. The converse containment follows from the same induction argument as for Proposition \ref{isom1}.
\end{proof}

\begin{rem}
  In fact, one can prove that $\pi_\ux(w(\vec{i}))=w^s(\vec{i})$ using the Fock space representation. Since we do not need this fact, we leave it to the reader.
\end{rem}
Now we are ready for the first main result of this section.
\begin{theorem}\label{ccap}
  $\Ga_Q$ has the weak* completely contractive approximation property for all $Q$ with $\max_{1\le i,j\le N} |q_{ij}|<1$.
\end{theorem}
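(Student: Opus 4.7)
The plan is to transfer Avsec's w*CCAP for the constant $q$-Gaussian algebras to $\Ga_Q$ through the trace-preserving embedding $\pi_\ux$ furnished by Proposition \ref{isom2}. Since $\max_{i,j}|q_{ij}|<1$, I first pick $q$ with $\max_{i,j}|q_{ij}|<q<1$ and write $Q=q\td Q$ so that $\max_{i,j}|\td q_{ij}|<1$, which is exactly the setup in which $\pi_\ux$ is defined. This realises $\Ga_Q$ as a trace-preserving subalgebra of $\mx_\ux:=\prod_{m,\ux}\Ga_q(\ell_2^m)\bar\otimes \Ga_{\td Q\otimes \mathds{1}_m}$, where the first tensor factor is a classical $q$-Gaussian algebra to which Avsec's theorem applies.

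By Avsec's result I obtain a net of normal, completely contractive, finite-rank radial multipliers $\varphi_\al^{(m)}:\Ga_q(\ell_2^m)\to\Ga_q(\ell_2^m)$, given by a sequence of multiplier functions $f_\al\in\ell^\8(\zz_+)$ with $\sup_k|f_\al(k)|\le 1$, finite support, and $f_\al(k)\to 1$ pointwise; since these multipliers see only Fock levels and not the underlying Hilbert space dimension, the same $f_\al$ produces such a map uniformly in $m$. I form $\Phi_\al^{(m)}:=\varphi_\al^{(m)}\otimes \id$ on $\Ga_q(\ell_2^m)\bar\otimes \Ga_{\td Q\otimes \mathds{1}_m}$ and take the pointwise ultraproduct $\Phi_\al:=(\Phi_\al^{(m)})^\bullet$ on $\mx_\ux$; normality of $\Phi_\al$ as a map on the von Neumann algebra ultraproduct is obtained by ultraproducing the preadjoints on $L_1$ and then dualising, exactly as done for $T_t$ in Section \ref{ousg}. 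Letting $E:\mx_\ux\to \pi_\ux(\Ga_Q)$ be the normal trace-preserving conditional expectation (which exists because $\pi_\ux$ is trace preserving), I define
\[
\psi_\al := \pi_\ux^{-1}\circ E\circ \Phi_\al\circ \pi_\ux : \Ga_Q\to \Ga_Q.
\]

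As a composition of normal complete contractions, $\psi_\al$ is a normal complete contraction. The critical identity is the one on special Wick words: using Proposition \ref{wicdec2} and the remark after Proposition \ref{isom3} we have $\pi_\ux(w(\vec{i}))=w^s(\vec{i})$, and in \eqref{swic2} the $q$-factor $s_{k_1}^q\cdots s_{k_d}^q$ is a reduced Wick word of length $d=|\vec{i}|$ in $\Ga_q(\ell_2^m)$ on which $\varphi_\al^{(m)}$ acts by the scalar $f_\al(d)$; consequently $\Phi_\al(w^s(\vec{i}))=f_\al(|\vec{i}|)w^s(\vec{i})\in\pi_\ux(\Ga_Q)$, so $E$ acts as the identity and
\[
\psi_\al(w(\vec{i}))=f_\al(|\vec{i}|)w(\vec{i}).
\]
If $\mathrm{supp}(f_\al)\subset\{0,1,\dots,K_\al\}$, the image of $\psi_\al$ is contained in $\Span\{w(\vec{i}):|\vec{i}|\le K_\al\}$, which is finite-dimensional. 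The point weak* convergence $\psi_\al\to \id$ follows from the pointwise convergence $f_\al(k)\to 1$ on the total set of Wick words together with the uniform bound $\|\psi_\al\|_{cb}\le 1$, using the standard density argument.

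The main technical hurdle I expect is the normality of the ultraproduct map $\Phi_\al$ on $\mx_\ux$, which is not automatic for an arbitrary ultraproduct of normal maps; this is dealt with by the preadjoint-and-dualise trick already rehearsed in Section \ref{ousg}. A secondary point is the uniformity of Avsec's multipliers in the dimension parameter $m$, which is fine because radiality depends only on Fock-level degree. Once these two points are handled, the rest is a direct bookkeeping of the action on Wick words via the decomposition of Proposition \ref{wicdec2}.
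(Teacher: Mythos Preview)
Your proof is correct and follows essentially the same approach as the paper: both transfer Avsec's radial multipliers on $\Ga_q$ through the embedding $\pi_\ux$ of Proposition \ref{isom2}, verify that $\varphi_\al(A)\otimes\id$ acts as a scalar on each $w^s(\vec{i})$ (so the image of $\pi_\ux$ is preserved), and conclude finite rank from the finite support of $f_\al$. The only cosmetic difference is your insertion of the conditional expectation $E$, which, as you yourself observe, acts as the identity on $\Phi_\al\circ\pi_\ux(\Ga_Q)$ and is therefore redundant; the paper simply writes $\psi_\al=\pi_\ux^{-1}\circ(\varphi_\al(A)\otimes\id)\circ\pi_\ux$ directly.
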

\begin{proof}
  Let $H$ be a real Hilbert space and $-1<q<1$. In \cite{Av}, Avsec proved that there exists a net of finite rank maps $\varphi_\al(A)$ which converges to the identity map on $\Ga_q(H)$ in the point-weak* topology and such that $\|\varphi_\al(A)\|_{cb}\le 1+\eps$ for some prescribed $\eps$. Here $\|\cdot\|_{cb}$ is the completely bounded norm and $\varphi_\al(A)$ only depends on the number operator $A$ on $\Ga_q(H)$. Let $Q=q\td Q$ as above. Consider the following diagram:
  \[
  \xymatrix{
  \Ga_Q\ar@{^{(}->}^<<<<{\pi_\ux}[r] \ar[d]^{\psi_\al} & \prod_{m,\ux} \Ga_{q}(\ell_2^m)\bar\otimes \Ga_{\td Q\otimes \mathds{1}_m}\ar[d]^{\varphi_\al(A)\otimes \id}\\
  \Ga_Q\ar@{^{(}->}[r]^<<<<{\pi_\ux}  & \prod_{m,\ux} \Ga_{q}(\ell_2^m)\bar\otimes \Ga_{\td Q\otimes \mathds{1}_m}
  }
  \]
  where we define $\psi_\al=\pi_\ux^{-1}\circ (\varphi_\al(A)\otimes \id)\circ\pi_\ux$. Here $\varphi_\al(A)\otimes \id$ is well-defined on the ultraproduct of von Neumann algebras because it is uniformly bounded in each $\Ga_{q}(\ell_2^m)\bar\otimes \Ga_{\td Q\otimes \mathds{1}_m}$. By an argument similar to that in Section \ref{ousg}, $\varphi_\al(A)\otimes \id$ is a normal map. Note that $\psi_\al$ is well-defined because $\pi_\ux$ is injective and $\varphi_\al(A)\otimes \id$ acts as a multiplier. We claim that $\psi_\al$ is the desirable completely contractive approximation of identity. By construction, the only non-trivial thing to check is that $\psi_\al$ is finite rank. To this end, it suffices to show that $\varphi(A)_\al\otimes \id$ restricted to
  $$
  \pi_\ux(L_2(\Ga_Q))=L_2\dash\Span\{w^s(\vec{i}): \vec{i}\in[N]^d, d\in \nz \}
  $$
  is finite rank thanks to Proposition \ref{isom2} and \ref{isom3}. Since $\varphi_\al(A)$ is finite rank, suppose its range is $\Span\{s_{k_1}^q\cdots s_{k_n}^q: \si(\vec{k})\in \cup_{n\in\nz} P_1(n), \vec{k}\in B\}$ for some finite set $B$. Then the range of $\varphi_\al(A)\otimes \id |_{\pi_\ux(L_2(\Ga_Q))}$ is
  \[
  \Span\{s_{k_1}^q\cdots s_{k_n}^q\otimes x_{i_1,k_1}\cdots x_{i_n,k_n}: \si(\vec{k})\in \cup_{n\in\nz} P_1(n), \vec{k}\in B\}.
  \]
  Therefore $\varphi_\al(A)\otimes \id |_{\pi_\ux(L_2(\Ga_Q))}$ is a finite rank map.
\end{proof}

\subsection{Strong solidity}
We follow closely the argument in \cite{Av,HS11}. The strategy is to first prove a weak containment result of bimodules and then use it to prove strong solidity of $\Ga_Q$. See e.g. \cites{BO08,Av} for more details on bimodules and weak containment. For simplicity, we write $ Q'=Q\otimes \mathds{1}_2=Q\otimes {1~ 1 \choose 1~ 1}$ as in Section \ref{s:deri}. Here, we assume
\begin{equation}\label{assu5}
  \max_{1\le i,j\le N} |q_{ij}|<q^2<q<1.
\end{equation}
Recall that $L_2^0(\Ga_{Q'})$ denotes the subspace of $L_2(\Ga_{Q'})$ which consists of mean zero elements. Define the following subspaces of $L_2^0(\Ga_{Q'})$
\[
F_m=L_2\dash\Span\{w(\vec{i}):\vec{i}\in [2N]^s, s\in \nz, s\ge m, \exists i_1,\cdots, i_m \in \{N+1,\cdots, 2N\}\},
\]
and $E_m=\oplus_{k=0}^m F_k$. Clearly, $E_m^\perp$ is a $\Ga_Q\dash\Ga_Q$-subbimodule of $L_2^0(\Ga_{Q'})$. We want to show that $E_m^\perp$ is weakly contained in the coarse bimodule $L_2(\Ga_Q)\otimes L_2(\Ga_Q)$ for $m$ large enough. By Proposition \ref{isom1}, we may identify $L_2(\Ga_{Q'})$ with the Fock space $\fx_{ Q'}$. For $\xi,\eta\in L_2^0(\Ga_{Q'})$, define $\Phi_{\xi,\eta}: L_2(\Ga_Q)\to L_2(\Ga_Q)$ by
\[
\Phi_{\xi,\eta}(x)=E_{\Ga_Q}(\xi x \eta).
\]
To distinguish the left action and the right action of $\Ga_{Q'}$ on $L_2(\Ga_{ Q'})$, we write $l(h_i)$ (resp. $r(h_i)$) as the left (resp. right) creation operator associated to $h_i$ acting on the Fock space $\fx_{Q'}$, i.e.,
\[
l(h_i) (h_{j_1}\otimes\cdots\otimes h_{j_n})= h_i\otimes h_{j_1}\otimes\cdots\otimes h_{j_n},
\]
\[
r(h_i) (h_{j_1}\otimes\cdots\otimes h_{j_n})= h_{j_1}\otimes\cdots\otimes h_{j_n}\otimes h_i.
\]
Here $h_i$'s are elements in $\cz^{2N}=\cz^{N}\oplus \cz^{N}$. We write $l(h_i)^*$ (resp. $r(h_i)^*$) as the left (resp. right) annihilation operator acting on the Fock space $\fx_{Q'}$. See more details for these operations in \cites{BS94,LP99}. One can also define them following Section \ref{s:fock} after choosing an o.n.b. Denote by
$$
H'^s=\Span\{w(\vec{i})\in L_2(\Ga_{ Q'}): \vec{i}\in [2N]^s\}
\quad \text{and}\quad H^s=\Span\{w(\vec{i})\in L_2(\Ga_{Q}): \vec{i}\in [N]^s\}.
$$

\begin{lemma}\label{lrcre}
Assume \eqref{assu5}. Let $(e_{i})_{i=1}^{2N}$ be an o.n.b. of $\cz^{2N}$. Suppose $\vec{i}\in [2N]^{n_1}$ and $\vec{j}\in[2N]^{n_2}$. If there are exactly $n$ elements of $\{i_{r+1},\cdots, i_{n_1}\}$ in $\{N+1,\cdots, 2N\}$, then
\begin{align*}
  &\| E_{\Ga_Q}\big[l(e_{i_1})\cdots l(e_{i_r}) l(e_{i_{r+1}})^*\cdots
l(e_{i_{n_1}})^* r(e_{j_1})\cdots r(e_{j_s}) r(e_{j_{s+1}})^*\cdots r(e_{j_{n_2}})^*(x) ] \|_2\\
&\le C_{q,n_1,n_2} q^{(\al -(n_2-s)- (n_1-r-n))n}\|x\|_2
\end{align*}
for all $x\in H^\al$.
\end{lemma}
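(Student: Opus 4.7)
The plan is to expand the action of the operator on $x \in H^\alpha$ via the mixed $q$-commutation relations \eqref{qcomm} and track which terms survive the conditional expectation $E_{\Gamma_Q}$. By linearity I may take $x = w(\vec{k})$ for some $\vec{k}\in[N]^\alpha$ and work on its Fock representative $e_{k_1}\otimes\cdots\otimes e_{k_\alpha}$. Since $E_{\Gamma_Q}$ projects onto the closed span of Wick words with all indices in $[N]$, and $x$ already has only such indices, a nonzero contribution requires every \emph{bad} index (one in $\{N+1,\ldots,2N\}$) appearing in a creation to be paired with an annihilation of the same bad index. In particular, each of the $n$ bad left annihilations $l(e_{i_p})^*$ must be matched with a creation drawn from $\{l(e_{i_1}),\ldots,l(e_{i_r})\}\cup\{r(e_{j_1}),\ldots,r(e_{j_s})\}$ carrying the same index.

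Next, I would put the full operator into normally ordered form by iteratively applying $c_k^*c_j = q(j,k)c_jc_k^* + \delta_{j,k}$ together with the analogous identities for mixed left/right pairs. This yields a finite sum indexed by matchings (Wick-type contractions) between annihilations and creations; each summand is a product of $\delta$-contractions, scalar $q$-factors, and a residual monomial in the uncontracted creations/annihilations acting on $x$.

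The main estimate is the $q$-factor accounting. At the moment a bad left annihilation $l(e_{i_p})^*$ is applied to the Fock vector, at least $\alpha - (n_2 - s)$ letters of $x$ survive (only $n_2 - s$ right annihilations act on the right end of $x$), and the $n_1 - r - n$ good left annihilations can remove at most $n_1 - r - n$ more. Hence at least $\alpha - (n_2 - s) - (n_1 - r - n)$ letters of $x$ lie between the bad annihilation and the bad-index letter it must annihilate (which necessarily sits outside the $x$-part of the vector, having been placed by a bad creation). Each commutation past such a letter produces a factor $q(k_t, i_p) = Q(k_t, i_p - N)$ with $|q(k_t, i_p)| < q^2 < q$. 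Summing over all $n$ bad annihilations gives an overall scalar factor bounded by $q^{n(\alpha - (n_2 - s) - (n_1 - r - n))}$, uniformly over the surviving matchings.

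Finally, the number of matchings in the normally ordered expansion is bounded by a combinatorial constant depending only on $n_1, n_2$, and for each matching the residual Fock vector has $L_2$-norm at most $\|x\|_2$ by the orthonormality of Wick words recorded in Remark \ref{rempr} and Proposition \ref{swinn}. The triangle inequality then delivers the stated bound with an overall constant $C_{q,n_1,n_2}$. The main obstacle will be the combinatorial bookkeeping for the $q$-factors: verifying uniformly that each bad annihilation must traverse at least $\alpha - (n_2 - s) - (n_1 - r - n)$ letters of $x$, regardless of whether it matches with a left or a right creation, so that the commutation paths through the Fock representative are correctly accounted for across every term in the normally ordered expansion.
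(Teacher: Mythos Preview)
Your overall strategy is sound and matches the paper's in spirit, but there is a genuine gap at the step ``By linearity I may take $x = w(\vec{k})$'' combined with ``for each matching the residual Fock vector has $L_2$-norm at most $\|x\|_2$.'' The $q$-factors you extract when a bad left annihilation $l(e_{i_p})^*$ crosses the letters of $x$ are products $\prod_{t} q(k_t,i_p)$ that depend on the specific tensor $e_{k_1}\otimes\cdots\otimes e_{k_\alpha}$. For a general $x=\sum_{\vec{k}} c_{\vec{k}}\, e_{k_1}\otimes\cdots\otimes e_{k_\alpha}\in H^\alpha$ these coefficients vary with $\vec{k}$, so you are not bounding a scalar multiple of a fixed linear operator applied to $x$; you are bounding a \emph{different} scalar for each basis tensor. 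Summing via the triangle inequality then controls only $\sum_{\vec{k}}|c_{\vec{k}}|$, not $\|x\|_2$, and since the $e_{\vec{k}}$ are neither orthogonal nor normalized in the $Q$-Fock inner product there is no direct passage from a uniform bound on simple tensors to an operator-norm bound on $H^\alpha$.

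This is exactly the obstruction the paper isolates (``The difficulty is that the coefficient in front of $x$ depends on $x$''), and it is not cosmetic. The paper's remedy is to factor $q(k_t,i_p)=q\cdot\td q(k_t,i_p)$ with $\td q_{ij}=q_{ij}/q$ and $|\td q_{ij}|<1$, and then realize the $\vec{k}$-dependent product $\prod_t \td q(k_t,i_p)$ as the effect of an honest annihilation operator in an enlarged mixed $q$-Gaussian algebra $\Gamma_{P\otimes\mathds{1}_{n_2+1}}$ built from the block matrix $P=\begin{pmatrix}Q'&\td Q\otimes\mathds{1}_2\\ \td Q\otimes\mathds{1}_2&Q'\end{pmatrix}$. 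In that algebra the operator is bounded by $C_q$ via \cite{BS94}*{Theorem 3.1}, so one obtains a genuine identity of the form $E_{\Gamma_Q}[\,l(e_{i_{n_1}})^* r(\cdots)x\,]=q^\alpha\cdot E_{\Gamma_Q}[\,(\text{bounded operator})\,x\,]$ valid for \emph{all} $x\in H^\alpha$ by linearity. Your argument would be complete once you supply an analogous device to turn the $\vec{k}$-dependent $q$-factor into a bounded linear map; without it the final $\|x\|_2$ bound is unjustified.
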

\begin{proof}
Note that among all possible configurations, the assertion is non-trivial only when
$$i_1,\cdots, i_r, j_{s+1}, \cdots, j_{n_2}\le N.$$
By \cite{BS94}*{Theorem 3.1},
\begin{equation}\label{anorm}
\|r(e_{j_s})^*\|\le \frac1{\sqrt{1-q}}, \quad \|l(e_{i_r})\|\le \frac{1}{\sqrt{1-q}}.
\end{equation}
We may assume without loss of generality that $r=0,s=n_2$ and estimate the norm of $l(e_{i_1})^*\cdots l(e_{i_{n_1}})^* r(e_{j_1})\cdots  r(e_{j_{n_2}}) $. The idea is that all $e_{i_r}$'s with $i_r> N$ have to pair with $e_{j_s}$'s to cancel out, and moving across the element $x$ will yield a power of $q$.  Let us assume $i_{n_1}> N$ to illustrate the argument. Note that by Remark \ref{rempr}$, H^\al$ can be identified with $(\cz^N\oplus 0)^{\otimes \al}$ via
\[
w(\vec{i})\mapsto W(e_{i_1}\otimes \cdots \otimes e_{i_\al}) \mapsto e_{i_1}\otimes \cdots \otimes e_{i_\al}.
\]
First assume $x=e_{k_1}\otimes\cdots\otimes e_{k_\al}$. Using \eqref{crann} (or the formula on p. 109 of \cite{BS94}), we find
  \begin{align*}
  l(e_{i_{n_1}})^* r(e_{j_1})\cdots r(e_{j_{n_2}}) x &= \sum_{m=1}^{n_2}\de_{i_{n_1}, j_m} \prod_{s=1}^\al q_{i_{n_1},k_{s}}\prod_{r=m+1}^{n_2} q_{i_{n_1},j_r} x \otimes e_{j_{n_2}}\otimes \cdots \otimes\check{e}_{j_m}\otimes \cdots e_{j_1}\\
  &=\sum_{m=1}^{n_2} \de_{i_{n_1}, j_m} \prod_{s=1}^\al q_{i_{n_1},k_{s}}  \prod_{r=m+1}^{n_2} q_{i_{n_1},j_r} r(e_{j_1})\cdots \check{r}(e_{j_m})\cdots r(e_{j_{n_2}}) x,
  \end{align*}
  where $\check{e}_{j_m}$ and $\check{r}(e_{j_m})$ mean that $e_{j_m}$ and ${r}(e_{j_m})$ are omitted in the expression. The difficulty is that the coefficient in front of $x$ depends on $x$. In order to extend the above equation to arbitrary $x\in H^\al$, we will find a linear operator for any fixed $m$ via deformation and enlargement of the algebra.  Define $\td q_{ij}=q_{ij}/q$ for $1\le i,j\le N$, $\td Q=(\td q_{ij})$, and
\[
P=\left(\begin{array}{cc}
Q\otimes \mathds{1}_2& \td Q\otimes \mathds{1}_2\\
\td Q\otimes \mathds{1}_2 & Q\otimes \mathds{1}_2
\end{array}
\right).
\]
Note that \eqref{assu5} implies that $\max_{ij} | p_{ij}|<q$. We can construct new von Neumann algebras $\Ga_P$ and $\Ga_{P\otimes \mathds{1}_{n_2+1}}$. Clearly, we have the following relation
\[
\Ga_Q \hookrightarrow \Ga_{Q'} \hookrightarrow  \Ga_{P} \hookrightarrow  \Ga_{P\otimes \mathds{1}_{n_2+1}}.
\]
We still denote by $E_{\Ga_Q}: \Ga_{P\otimes  \mathds{1}_{n_2+1}} \to \Ga_Q$ the conditional expectation. Let $\hat{i}_{n_1}= {i_{n_1}+2N}$,
\[
\hat{j}_r=
\begin{cases}
{j_r+2N}, \quad\text{ if } j_r>N,\\
j_r,  \quad \text{ otherwise}.
\end{cases}
\]
For fixed $m$, let
$$
\td i_{n_1}=\hat{i}_{n_1}+4mN=i_{n_1}+2N+4mN, \quad \td j_m = j_m + 4mN
$$
and $\td j_r=\hat j_r$ for $r\neq m$. In $L_2(\Ga_{P\otimes \mathds{1}_{n_2+1}})$, observing the repetition pattern in the matrix $P$, we have
\begin{align*}
&r(e_{ \td j_1})\cdots r(e_{\td j_{m-1}})  [l(e_{\td{i}_{n_1}})^* {r}(e_{\td  j_m})] r(e_{\td j_{m+1}})\cdots r(e_{ \td j_{n_2}}) x \\
&=\sum_{u=m}^{n_2} \de_{\td i_{n_1}, \td j_u} \prod_{s=1}^\al \td q_{i_{n_1},k_s}  \prod_{v=u+1}^{n_2} p_{\hat i_{n_1}, \hat j_v} r(e_{ \td j_1})\cdots r(e_{\td j_{m-1}}) \cdots \check{r}(e_{\td  j_u})\cdots r(e_{ \td j_{n_2}}) x\\
&= \prod_{s=1}^\al \td q_{i_{n_1},k_s}  \prod_{v=m+1}^{n_2} p_{\hat i_{n_1}, \hat j_v} r(e_{ \td j_1})\cdots \check{r}(e_{\td  j_m})\cdots r(e_{ \td j_{n_2}}) x,
\end{align*}
where $p_{\hat i_{n_1}, \hat j_v} =q_{i_{n_1},j_v}$ if $j_v\in \{N+1,\cdots, 2N\}$ and $p_{\hat i_{n_1}, \hat j_v} =\td q_{i_{n_1},j_v}$ otherwise. Note that the term $4mN$ is used to guarantee that $l(e_{\td i_{n_1}})^*$ only annihilates $e_{\td j_m}$. Let
$$I(m)= \{j_v: v\in\{m+1,\cdots, n_2\}, j_v \le N\}.
$$
Then
\begin{align}\label{alter}
&E_{\Ga_Q}[ l(e_{ i_{n_1}})^* r(e_{ j_1})\cdots r(e_{ j_{n_2}}) x ]  \nonumber \\
&= q^\al \sum_{m=1}^{n_2} \de_{ i_{n_1},  j_m} \prod_{s=1}^\al \td q_{i_{n_1},k_{s}}  \prod_{r=m+1}^{n_2}  q_{ i_{n_1}, j_r}E_{\Ga_Q}[ r(e_{ j_1})\cdots \check{r}(e_{ j_m})\cdots r(e_{ j_{n_2}}) x] \nonumber \\
&= q^\al \sum_{m=1}^{n_2} \de_{i_{n_1},j_m} q^{\# I(m)} \prod_{s=1}^\al \td q_{i_{n_1},k_s}  \prod_{v=m+1}^{n_2} p_{\hat i_{n_1}, \hat j_v} E_{\Ga_Q} [ r(e_{ \td j_1})\cdots \check{r}(e_{\td  j_m})\cdots r(e_{ \td j_{n_2}}) x ]\nonumber \\
&=q^\al \sum_{m=1}^{n_2} \de_{i_{n_1},j_m} q^{\# I(m)}E_{\Ga_Q} [ r(e_{ \td j_1})\cdots r(e_{\td j_{m-1}})  [l(e_{\td{i}_{n_1}})^* {r}(e_{\td  j_m})] r(e_{\td j_{m+1}})\cdots r(e_{ \td j_{n_2}}) x ] .
\end{align}
Here the conditional expectation is used in the second equality so that the change in $\vec{i}$ and $\vec{j}$ will not affect the resultant value in $L_2(\Ga_Q)$. Note that the summand in \eqref{alter} does not depend on $x$ for each fixed $m$. By linearity, \eqref{alter} holds for any $x\in H^\al$. We deduce from \eqref{anorm} and the  triangle inequality that
\[
\| E_{\Ga_Q}[ l(e_{ i_{n_1}})^* r(e_{ j_1})\cdots r(e_{ j_{n_2}}) x ]  \|_2\le C_{q,n_2} q^\al\|x\|_2
\]
for all $x\in H^\al$. Since $l(e_{ i_{n_1}})^* r(e_{ j_1})\cdots r(e_{ j_{n_2}}) x$ is a linear combination of words with fixed length, the above argument can be easily extended to handle more than one annihilators. To get a norm estimate on $E_{\Ga_Q}[l(e_{i_1})^*\cdots l(e_{i_{n_1}})^* r(e_{j_1})\cdots  r(e_{j_{n_2}}) (x)]$, it suffices to consider the configuration which yields the minimal power of $q$. This case occurs when
\[
i_1,\cdots, i_n, j_{n_2-n+1}, \cdots, j_{n_2} \in \{N+1,\cdots, 2N\}.
\]
In this situation, $l(e_{i_1})^*,\cdots, l(e_{i_n})^*$ need to cross at least $\al-(n_1-n)$ terms to cancel with $e_{j_s}$'s. This gives $q^{[\al-(n_1-n)]n}$. Using \eqref{anorm} to estimate the norm of $l(e_{i_{n+1}})^*\cdots l(e_{i_{n_1}})^*$ yields a constant $C_{q,n_1}$. We proceed in this way to complete the proof.
\end{proof}
We will use the normal form theorem of Wick products \cites{BKS, Kr00} to estimate the norm of $\Phi_{\xi,\eta}$. This is achieved via the following result.
\begin{lemma}\label{schest}
  Assume \eqref{assu5}. Let $\xi\in H'^{n_1}\cap F_n$ and $\eta\in H'^{n_2}\cap F_n$. Then for $\al>2(n_1+n_2)$ and $x\in H^\al$, we have
  \[
  \|\Phi_{\xi,\eta}(x) \| _2 \le C_{q,\xi,\eta}q^{n\al/2}\|x\|_2.
  \]
  Moreover, $\Phi_{\xi,\eta}(x)\in \oplus_{\bt=\al-n_1-n_2+2n}^{\al+n_1+n_2-2n} H^\bt$. \
 \end{lemma}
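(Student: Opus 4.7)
The approach is to reduce the estimate to a finite sum of normal-form expressions, each of which is controlled by Lemma \ref{lrcre}. By linearity and density I would assume $\xi = W(e_{i_1}\otimes \cdots \otimes e_{i_{n_1}})$ and $\eta = W(e_{j_1}\otimes \cdots \otimes e_{j_{n_2}})$ are single Wick products whose index vectors each contain at least $n$ entries in $\{N+1,\ldots,2N\}$. The Wick normal form theorem (see \cite{BS94,Kr00}) lets one express $\xi$ as a finite linear combination, parametrized by subsets $S \subseteq [n_1]$ together with orderings, of monomials of the shape $l(e_{i_{\sigma(1)}})\cdots l(e_{i_{\sigma(r)}})\, l(e_{i_{\sigma(r+1)}})^*\cdots l(e_{i_{\sigma(n_1)}})^*$, with coefficients that are products of $q_{kl}$'s and therefore uniformly bounded. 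An analogous expansion applies to $\eta$ with right creation/annihilation operators.

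Since the left and right actions of $\Gamma_{Q'}$ on its $L_2$-space commute, $\xi\cdot x \cdot \eta$ becomes a finite linear combination of operators of exactly the form treated by Lemma \ref{lrcre}, applied to $x \in H^\alpha$. I would then apply $E_{\Gamma_Q}$ term by term. Because $x$ carries only low indices, the proof of Lemma \ref{lrcre} already shows that only those terms with low left-creator indices and low right-annihilator indices can survive $E_{\Gamma_Q}$. In every surviving term, the at-least-$n$ high indices of $\xi$ must therefore occur among the left annihilators, and the at-least-$n$ high indices of $\eta$ among the right creators. If $n'\ge n$ denotes the actual number of high left annihilators in a given term, Lemma \ref{lrcre} yields a bound of order $q^{n'(\alpha -(n_2-s)-(n_1-r-n'))}$. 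Using $(n_2-s)+(n_1-r-n')\le n_1+n_2-n'$ and the hypothesis $\alpha > 2(n_1+n_2)$, the exponent is at least $n'(\alpha-n_1-n_2)\ge n\cdot \alpha/2$, which gives the stated decay. Summing over the (finitely many) normal-form terms absorbs everything into the constant $C_{q,\xi,\eta}$.

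For the range assertion, note that a surviving term with $r$ left creators and $s$ right creators changes the length by $2r - n_1 + 2s - n_2$. The constraints forced by survival, namely $n_1 - r \geq n$ and $s \geq n$, confine $r$ to $[0,n_1-n]$ and $s$ to $[n,n_2]$, so the output length $\beta$ lies in $[\alpha-n_1-n_2+2n,\ \alpha+n_1+n_2-2n]$; since $E_{\Gamma_Q}$ preserves the length grading, the same confinement applies to $\Phi_{\xi,\eta}(x)$. The main technical obstacle I expect is the careful bookkeeping of the Wick normal form — both checking that left multiplication by a Wick word really does reduce to the sum of operators that Lemma \ref{lrcre} is designed to handle, and verifying that ``high index of $\xi$'' is preserved under the normal form in the sense that any term with a high left creator or high right annihilator is killed by $E_{\Gamma_Q}$. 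Once those structural facts are pinned down, the estimate reduces to the bookkeeping of exponents carried out above.
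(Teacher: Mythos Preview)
Your proposal is correct and follows essentially the same approach as the paper: expand $\xi$ and $\eta$ via the Wick normal form of \cite{BKS,Kr00}, apply Lemma~\ref{lrcre} term by term, and use the constraint $\al>2(n_1+n_2)$ together with the observation that only terms whose high indices appear as left annihilators (for $\xi$) and right creators (for $\eta$) survive $E_{\Ga_Q}$. Your bookkeeping with $n'\ge n$ is in fact slightly more careful than the paper's, which silently identifies the two, but the exponent estimate and the range computation are otherwise identical.
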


\begin{proof}
  First we assume $\xi=w(\vec{i}), \eta=w(\vec{j})$ and identify $x$ as a vector in $(\cz^N\oplus 0)^\al$. By the normal form theorem of Wick products \cite{BKS} and \cite{Kr00}*{Theorem 1}, we have
  \begin{align*}
    w(\vec{i})&=W(e_{i_1}\otimes \cdots \otimes e_{i_{n_1}}) \\
    &=\sum_{r=0}^{n_1}\sum_{\si\in S_{n_1}/(S_r\times S_{n_1-r})} K(Q,\si) l(e_{\si(i_1)})\cdots l(e_{\si(i_r)}) l(e_{\si(i_{r+1})})^*\cdots l(e_{\si(i_{n_1})})^*,
  \end{align*}
where $\si(i_r)=i_{\si^{-1}(r)}$, and $K(Q,\si)$ is a product of certain entries of $Q$ and only depending on $Q$ and $\si$. The precise value of $K(Q,\si)$ is irrelevant here. We only need the fact that $|K(Q,\si)|\le C_{q, n_1}$ for some constant $C_{q,n_1}$ depending on $q$ and $n_1$. We have a similar formula for $w(\vec{j})$. It follows that
\begin{align}
\Phi_{\xi,\eta}(x) = &\sum_{r=0}^{n_1}\sum_{s=0}^{n_2}\sum_{\substack{\si\in S_{n_1}/(S_r\times S_{n_1-r})\\ \pi\in S_{n_2}/(S_s\times S_{n_2-s})}} K(Q,\si) K(Q,\pi) E_{\Ga_Q}\big[l(e_{\si(i_1)})\cdots l(e_{\si(i_r)}) l(e_{\si(i_{r+1})})^*\cdots \nonumber \\
& l(e_{\si(i_{n_1})})^* r(e_{\pi(j_1)})\cdots r(e_{\pi(j_s)}) r(e_{\pi(j_{s+1})})^*\cdots r(e_{\pi(j_{n_2})})^*(x)\big].\label{xieta}
\end{align}
By Lemma \ref{lrcre},
\begin{align*}
\| E_{\Ga_Q}&\big[ l(e_{\si(i_1)})\cdots l(e_{\si(i_r)}) l(e_{\si(i_{r+1})})^*\cdots  l(e_{\si(i_{n_1})})^*r(e_{\pi(j_1)})\cdots r(e_{\pi(j_s)})\\
& r(e_{\pi(j_{s+1})})^*\cdots r(e_{\pi(j_{n_2})})^*(x)\big]\|_2 \le  C_{q, n_1,n_2} q^{(\al -(n_2-s)- (n_1-r-n))n} \|x\|_2.
\end{align*}
Since $\al-n_1-n_2+s+r+n\ge \frac12 \al $, it follows from the triangle inequality that
\[
\|\Phi_{\xi,\eta}(x) \|_2 \le C_{q,n_1,n_2} q^{n\al/2}\|x\|_2.
\]
Now suppose $\xi,\eta$ are linear combinations of special Wick words. Using the triangle inequality again, we have proved the first assertion. As for the range of $\Phi_{\xi,\eta}$, a moment of thought shows that the summand in \eqref{xieta} has length $\al-n_1-n_2+2s+2r$ and that $0\le r\le n_1- n, n\le s\le n_2$ because we must have $\si(i_1),\cdots, \si(i_r), \pi(j_{s+1}), \cdots, \pi(j_{n_2})\le N$ so that the right-hand side of \eqref{xieta} is nonzero. This gives the ``moreover'' part of the lemma.
\end{proof}


\begin{lemma}\label{HS}
Let $K=\oplus_{n=0}^{\infty} K_n$  and $T:K\to K$ be an operator such that
 \begin{enumerate}
 \item[i)] $\dim(K_n)\le d^n$; {\rm ii)} $\|T|_{K_n}\|\le C \al^n$ for $n\ge n_0$; {\rm iii)} $\al^2d<1$.
 \end{enumerate}
Then $T$ is Hilbert--Schmidt.
\end{lemma}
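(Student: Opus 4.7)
The plan is to compute the Hilbert--Schmidt norm of $T$ by decomposing the trace along the grading $K=\oplus_n K_n$ and estimating each block. Pick an orthonormal basis $(e_i^{(n)})_i$ of each $K_n$; together these form an orthonormal basis of $K$, so
\[
\|T\|_{HS}^2 = \sum_{n=0}^{\infty}\sum_{i}\|Te_i^{(n)}\|^2 = \sum_{n=0}^{\infty}\|T|_{K_n}\|_{HS}^2,
\]
where $T|_{K_n}\colon K_n\to K$ is the restriction.

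The key observation is the elementary fact that for any bounded operator $S$ from a finite dimensional Hilbert space $H$ into another Hilbert space, $\|S\|_{HS}^2\le \dim(H)\,\|S\|^2$ (sum $\|Se_i\|^2\le \|S\|^2$ over $\dim(H)$ basis vectors). Applying this with $S=T|_{K_n}$ and using hypothesis (i) gives
\[
\|T|_{K_n}\|_{HS}^2 \le \dim(K_n)\,\|T|_{K_n}\|^2 \le d^n\,\|T|_{K_n}\|^2.
\]

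Now split the sum at $n_0$. For $n\ge n_0$, hypothesis (ii) yields $\|T|_{K_n}\|^2\le C^2\al^{2n}$, and combined with the above:
\[
\sum_{n\ge n_0}\|T|_{K_n}\|_{HS}^2 \le C^2\sum_{n\ge n_0}(\al^2 d)^n,
\]
which is finite thanks to hypothesis (iii). For the finitely many indices $n<n_0$, each $K_n$ is finite dimensional (dimension at most $d^n$) and $T$ is bounded on $K$, so $\|T|_{K_n}\|_{HS}^2\le d^n\|T\|^2<\infty$; summing over $n<n_0$ gives a finite contribution. Adding the two pieces bounds $\|T\|_{HS}^2$ by a finite constant, proving $T$ is Hilbert--Schmidt.

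There is no real obstacle here: the statement is the standard ``geometric tail beats polynomial dimension growth'' principle, and the only thing to be a bit careful about is that (ii) is assumed only for $n\ge n_0$, which is handled by separating the head of the series and invoking boundedness of $T$ on each finite-dimensional $K_n$ with $n<n_0$. The hypothesis that $T$ is an operator (hence bounded) is implicitly used in this step.
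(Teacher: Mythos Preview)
Your proof is correct and follows essentially the same approach as the paper: decompose $\tr(T^*T)$ along the projections onto $K_n$, bound each block by $\dim(K_n)\|T|_{K_n}\|^2\le d^n\|T|_{K_n}\|^2$, and sum the resulting geometric series using $\al^2 d<1$. You are in fact slightly more careful than the paper in explicitly treating the finitely many terms with $n<n_0$.
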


\begin{proof}
 Let $P_n:K\to K_n$ be the orthogonal projection. Then
 \begin{align*}
 \tr(T^*T) &=  \sum_n \tr((TP_n)^*TP_n)
 \le \sum_n \|TP_n\|^2 d^n
 \le C \sum_n \al^{2n}d^n
 \end{align*}
Since the series is absolutely convergent the assertion follows immediately.
\end{proof}

\begin{lemma}\label{schat}
 Let $\xi,\eta\in F_n$ and $n> -\ln N/\ln q$. Then $\Phi_{\xi,\eta}: L_2(\Ga_Q)\to L_2(\Ga_Q)$ is Hilbert--Schmidt.
\end{lemma}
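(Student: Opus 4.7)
My plan is to combine the graded norm decay from Lemma \ref{schest} with the abstract Hilbert--Schmidt criterion of Lemma \ref{HS}, exploiting the Wick decomposition $L_2(\Ga_Q) = \bigoplus_{\al \ge 0} H^\al$ together with the crucial dimension bound $\dim H^\al \le N^\al$ noted in the introduction.

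First I would reduce to the homogeneous case: since finite linear combinations of special Wick words $w(\vec{i})$ are dense in $F_n$, and since $(\xi,\eta) \mapsto \Phi_{\xi,\eta}$ is bilinear and respects the decomposition of $F_n$ into graded pieces $H'^{k} \cap F_n$, it suffices to treat $\xi \in H'^{n_1} \cap F_n$ and $\eta \in H'^{n_2} \cap F_n$ for some fixed $n_1, n_2 \ge n$. For such $\xi, \eta$, Lemma \ref{schest} supplies the key estimate
\[
\|\Phi_{\xi,\eta}|_{H^\al}\| \le C_{q,\xi,\eta}\, q^{n\al/2} \qquad \text{for all } \al > 2(n_1 + n_2),
\]
together with the fact that $\Phi_{\xi,\eta}(H^\al) \subset \bigoplus_{\bt = \al - n_1 - n_2 + 2n}^{\al + n_1 + n_2 - 2n} H^\bt$, so the operator genuinely respects the grading up to a bounded shift.

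Next I would apply Lemma \ref{HS} to $T = \Phi_{\xi,\eta}$ with $K_\al = H^\al$, taking $d = N$ (thanks to $\dim H^\al \le N^\al$) and the decay rate $q^{n/2}$, with $n_0 = 2(n_1+n_2) + 1$. The smallness condition $\al^2 d < 1$ of Lemma \ref{HS} then reads $(q^{n/2})^2 \cdot N = q^n N < 1$. Because $0 < q < 1$ and hence $\ln q < 0$, this is equivalent to $n \ln q < -\ln N$, i.e.\
\[
n > -\frac{\ln N}{\ln q},
\]
which is exactly the hypothesis of the lemma. Lemma \ref{HS} then concludes that $\Phi_{\xi,\eta}$ is Hilbert--Schmidt.

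The point requiring some care is the passage from finite Wick combinations to arbitrary $\xi, \eta \in F_n$: the constant $C_{q,\xi,\eta}$ in Lemma \ref{schest} depends on the maximum Wick length, so one cannot simply close the estimate in the $L_2$ norm. The remedy is to expand $\xi = \sum_{n_1 \ge n} \xi_{n_1}$ and $\eta = \sum_{n_2 \ge n} \eta_{n_2}$ along the grading and to sum the individual HS bounds; the geometric factor $q^{n\al/2}$ with $q^n N < 1$ comfortably dominates the polynomial growth in $n_1, n_2$ coming from the Wick normal form expansion \eqref{xieta}, so the total HS norm is finite and controlled by sufficiently strong (non-$L_2$) norms of $\xi, \eta$ on $F_n$. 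This extension suffices for the applications to weak containment of bimodules and strong solidity that follow.
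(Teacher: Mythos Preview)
Your core argument is exactly the paper's: decompose $L_2(\Ga_Q)=\bigoplus_\al H^\al$ with $\dim H^\al\le N^\al$, invoke Lemma \ref{schest} to get $\|\Phi_{\xi,\eta}|_{H^\al}\|\le C_{q,\xi,\eta}(q^{n/2})^\al$, and apply Lemma \ref{HS} with $d=N$ and decay rate $q^{n/2}$, the condition $q^nN<1$ being equivalent to $n>-\ln N/\ln q$. The paper's proof is exactly these three lines and does not say more.

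One comment on your last paragraph: the paper does not attempt the extension from homogeneous $\xi,\eta$ to arbitrary $\xi,\eta\in F_n$; it simply applies Lemma \ref{schest} as if it covered the general case. Your instinct to flag this is good, but your claimed ``polynomial growth in $n_1,n_2$'' of the constants is not accurate---the Wick normal form sum in \eqref{xieta} runs over cosets in $S_{n_1}$ and $S_{n_2}$, so the constants $C_{q,n_1,n_2}$ grow combinatorially. Fortunately this does not matter for the application: Proposition \ref{wkcon} (weak containment of $E_{n-1}^\perp$ in the coarse bimodule) only requires $\Phi_{\xi,\eta}$ to be Hilbert--Schmidt for $\xi,\eta$ ranging over a total subset of $E_{n-1}^\perp$, and finite linear combinations of Wick words of bounded length suffice. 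So the homogeneous case you prove in your first three paragraphs is already enough.
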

\begin{proof}
  Write $L_2(\Ga_Q)=\oplus_{s=0}^\8 H^s$. Then $\dim(H^\al)\le N^\al$ and $q^n N<1$. By Lemma \ref{schest}, we have
  $$\|\Phi_{\xi,\eta}|_{H^\al}\|\le C_{q,\xi,\eta} (q^{n/2})^\al.
  $$
The assertion follows from Lemma \ref{HS}.
\end{proof}

\begin{prop}\label{wkcon}
 Let $n> -\frac{\ln N}{\ln q}$. Then $E_{n-1}^\perp$ is weakly contained in the coarse bimodule $L_2(\Ga_Q)\otimes L_2(\Ga_Q)$.
\end{prop}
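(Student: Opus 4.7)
The plan is to combine the Hilbert--Schmidt estimate of Lemma \ref{schat} with the standard criterion that a $\Ga_Q$--$\Ga_Q$ subbimodule $H$ of $L_2(\Ga_{Q'})$ is weakly contained in the coarse bimodule $L_2(\Ga_Q)\otimes L_2(\Ga_Q)$ as soon as the coefficient maps $\Phi_{\xi,\eta}$ are compact for $\xi,\eta$ in a dense subspace. This is the mechanism used in \cite{Av} and \cite{HS11}, and rests on the identification of the coarse bimodule with the Hilbert--Schmidt operators on $L_2(\Ga_Q)$ under left and right multiplication.

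First I would fix a dense subspace $\mathcal{D}\subset E_{n-1}^\perp$ consisting of finite linear combinations of special Wick words $w(\vec{i})$ with $\vec{i}\in[2N]^s$, $s\ge n$, having at least $n$ coordinates in $\{N+1,\ldots,2N\}$; such $w(\vec{i})$ sit in $H'^s\cap F_n$, and their span is dense in $E_{n-1}^\perp$. For $\xi,\eta\in\mathcal{D}$ I would decompose $\xi=\sum_s\xi^{(s)}$ and $\eta=\sum_{s}\eta^{(s)}$ according to length with $\xi^{(s)},\eta^{(s)}\in H'^s\cap F_n$, and apply Lemma \ref{schat} to each summand pair: for $\alpha>2(s+s')$ one gets $\|\Phi_{\xi^{(s)},\eta^{(s')}}|_{H^\alpha}\|\le C_{\xi,\eta,s,s'}\,q^{n\alpha/2}$, while the finitely many smaller $\alpha$ contribute bounded operators between finite-dimensional spaces. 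Since $\dim H^\alpha\le N^\alpha$ and the hypothesis $n>-\ln N/\ln q$ forces $q^n N<1$, Lemma \ref{HS} applies summand-by-summand, so $\Phi_{\xi,\eta}$ is Hilbert--Schmidt, in particular compact.

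Finally I would invoke the criterion recalled in the first paragraph: compactness of all coefficients $\Phi_{\xi,\eta}$ on the dense $*$-stable subspace $\mathcal{D}$ forces the $*$-representation of $\Ga_Q\odot\Ga_Q^{\mathrm{op}}$ on $E_{n-1}^\perp$ to extend continuously to the minimal tensor product (see, e.g., \cite{BO08}), which is precisely weak containment in the coarse bimodule in the sense of Connes--Popa. The main obstacle will be to phrase this last step so that the Hilbert--Schmidt estimate of Lemma \ref{schat} cleanly feeds into the min-norm continuity statement; however, this is a routine invocation of general bimodule theory once the quantitative decay of Lemma \ref{schat} is available, as it is here.
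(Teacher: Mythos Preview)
Your proposal is correct and follows essentially the same approach as the paper, which simply invokes \cite{Av}*{Proposition 4.1} together with Lemma \ref{schat}: the Hilbert--Schmidt property of the coefficient maps $\Phi_{\xi,\eta}$ on a dense subspace of $E_{n-1}^\perp$ is exactly the input to the standard weak-containment criterion you describe. Your write-up just makes explicit the density argument and the passage from Hilbert--Schmidt coefficients to min-continuity of the $\Ga_Q\odot\Ga_Q^{\mathrm{op}}$-representation that the paper leaves to the cited reference.
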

\begin{proof}
The proof is given in \cite{Av}*{Proposition 4.1} using Lemma \ref{schat}.
\end{proof}

Let $R_t: \rz^{N}\oplus \rz^N \to \rz^{N}\oplus \rz^N$ be the orthogonal transform
\[
R_t=\Big( \begin{array}{cc}
e^{-t}\id& -\sqrt{1-e^{-2t}}\id\\
\sqrt{1-e^{-2t}} \id & e^{-t} \id
\end{array}\Big),
\]
where $\id: \rz^N\to \rz^N$ is the identity operator and we understand the canonical o.n.b. in $0\oplus \rz^N$ is $\{e_{N+1},\cdots, e_{2N}\}$. Recall from \cite{LP99}*{Lemma 3.1} that there is a second quantization functor $\Ga_Q$ which sends the category of Hilbert spaces to the category of mixed $q$-Gaussian algebras. Let $\al_t = \Ga_Q(R_t)$. Then $\al_t$ is a trace preserving $*$-automorphism on $\Ga_{ Q'}$ and extends to an isometry on $L_2(\Ga_{Q'})$. It is easy to check that $T_t=E_{\Ga_Q}\circ\al_t$ coincides with the Ornstein--Uhlenbeck semigroup on $\Ga_Q$ defined in Section \ref{ousg}. The following is a modification of Popa's s-malleable deformation estimate \cite{Po08}*{Lemma 2.1}. The proof modifies slightly that of \cite{Av}*{Proposition 5.1}. We provide the difference here for the reader's convenience. Recall that $L_2^0(\Ga_{Q'})=\oplus_{m=1}^\8 {H'}^m$.
\begin{prop}\label{meana}
  Let  $P_k:  L_2^0(\Ga_{Q'}) \to E_k^\perp$ be the orthogonal projection. Then for $k\ge 1$, we have
  \[
  \|(\al_{t^k}-\id)(x)\|_2\le C_k\|P_{k-1}\al_t(x)\|_2
  \]
  for $x\in \oplus_{n=k}^\8 H^n \subset L_2(\Ga_Q)$ and $t<2^{-k}$.
\end{prop}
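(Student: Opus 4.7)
The strategy is to reduce the estimate to an elementary tail inequality for a binomial random variable, using the Fock-space structure available under assumption \eqref{assu5}. Since $\al_t$ preserves word length and the $H'^n\subset L_2(\Ga_{Q'})$ are mutually orthogonal for different $n$, both sides of the asserted inequality decompose orthogonally in $n$, and it suffices to fix $n\ge k$ and treat $x\in H^n$. By Remark \ref{rempr}, assumption \eqref{assu5} identifies $H^n\cong(\cz^N)^{\otimes n}$ and $H'^n\cong(\cz^{2N})^{\otimes n}$, and $\al_t=\Ga_Q(R_t)$ acts on the $n$-particle layer as $R_t^{\otimes n}$. Writing $R_t=e^{-t}I_0+\sqrt{1-e^{-2t}}\,S$, with $I_0$ and $S$ the canonical inclusions of $\cz^N$ into the first and second copies of $\cz^N$ inside $\cz^{2N}$, I expand
\[
\al_t(x) \;=\; \sum_{j=0}^n T_j(x), \qquad T_j(x) \;=\; e^{-t(n-j)}(1-e^{-2t})^{j/2}\sum_{|I|=j}S^I(x),
\]
where $S^I$ applies $S$ at the positions in $I\subset[n]$. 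The Wick-word subspaces $H^n_{(j)}$ of length $n$ with exactly $j$ indices in $\{N+1,\dots,2N\}$ are pairwise orthogonal in $L_2(\Ga_{Q'})$ by Proposition \ref{swinn} (their generating multisets are distinct), so the $T_j(x)$ are pairwise orthogonal and $P_{k-1}\al_t(x)=\sum_{j\ge k}T_j(x)$.

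Setting $p=1-e^{-2t}$, write $\|T_j(x)\|_2^2=(1-p)^{n-j}p^j\,a_j(x)$ with $a_j(x):=\|\sum_{|I|=j}S^I(x)\|_2^2$ independent of $t$. The isometry relation $\sum_j\|T_j(x)\|_2^2=\|x\|_2^2$, which must hold for every $t>0$, yields the polynomial identity $\sum_j a_j(x)(1-p)^{n-j}p^j\equiv\|x\|_2^2$ on $p\in(0,1)$; uniqueness of the Bernstein expansion forces $a_j(x)=\binom{n}{j}\|x\|_2^2$, so
\[
\|T_j(x)\|_2^2 = \binom{n}{j}p^j(1-p)^{n-j}\,\|x\|_2^2 \quad\text{and}\quad \|P_{k-1}\al_t(x)\|_2^2 = \Prob(X_{n,p}\ge k)\,\|x\|_2^2,
\]
where $X_{n,p}\sim\mathrm{Bin}(n,p)$. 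An analogous decomposition of $\al_{t^k}(x)$ together with $T_0^{(t^k)}(x)=e^{-nt^k}x$ and orthogonality of the remaining layers to $x\in H^n_{(0)}$ gives
\[
\|(\al_{t^k}-\id)(x)\|_2^2 = (1-e^{-nt^k})^2\|x\|_2^2 + (1-e^{-2nt^k})\|x\|_2^2 = 2(1-e^{-nt^k})\,\|x\|_2^2.
\]

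The proposition is therefore reduced to the elementary inequality $2(1-e^{-nt^k})\le C_k^2\,\Prob(X_{n,p}\ge k)$ for $n\ge k$ and $t<2^{-k}$. Since $t<1/2$ gives $p\ge t$ and $1-p=e^{-2t}$, the crude single-term bound $\Prob(X_{n,p}\ge k)\ge\binom{n}{k}t^k e^{-2tn}$ is available. I would split into two regimes. When $n\le 2k/t$, the factor $e^{2tn}\le e^{4k}$ is bounded, and combining $1-e^{-nt^k}\le nt^k$ with $n\le k\binom{n}{k}$ (valid for $n\ge k$) produces the bound with constant $ke^{4k}$. When $n>2k/t$, the mean satisfies $np\ge nt\ge 2k$, and a standard Chernoff lower-tail estimate gives a uniform lower bound $\Prob(X_{n,p}\ge k)\ge c_k>0$ depending only on $k$; combined with $1-e^{-nt^k}\le 1$, this yields the bound with constant $c_k^{-1}$. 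Setting $C_k^2=2\max\{ke^{4k},c_k^{-1}\}$ closes the argument. The only delicate point is this uniform binomial tail bound in the second regime, but it is entirely routine; everything else is bookkeeping with the second-quantization structure.
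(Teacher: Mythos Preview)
Your proof is correct and follows essentially the same route as the paper: reduce to a fixed $n$-particle layer, identify $\|P_{k-1}\al_t(x)\|_2^2$ with the binomial tail $\sum_{m\ge k}\binom{n}{m}(1-e^{-2t})^m e^{-2t(n-m)}\|x\|_2^2$, compute $\|(\al_{t^k}-\id)(x)\|_2^2=2(1-e^{-nt^k})\|x\|_2^2$, and finish with a numerical comparison. The paper obtains the coefficient $\binom{n}{m}$ by a direct inner-product calculation via Proposition~\ref{swinn}, whereas your Bernstein-uniqueness argument (using that $\al_t$ is an isometry for every $t$) is a slick shortcut that avoids that combinatorics entirely; and where the paper simply cites \cite{Av}*{Proposition 5.1} for the final numerical estimate, you supply it explicitly via the binomial single-term bound and a Chernoff tail split, which is a welcome addition.
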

\begin{proof}
Note that $\al_{t^k}-\id$ and $P_{k-1}\al_t$ preserve the length of $n$-tensors for $n\ge k$ and $t>0$. It suffices to prove the  assertion for $x\in H^n$ with $n\ge k$. Identify $H^n$ with $(\cz^N\oplus 0)^{\otimes n}$. Let $x=e_{i_1}\otimes \cdots\otimes e_{i_n}$ and $y=e_{j_1}\otimes \cdots \otimes e_{j_n}$. Then
\[
\lge P_{k-1} \al_t(x), P_{k-1}\al_t(y)\rge =\sum_{m=k}^n \lge P_{F_m} \al_t(x), P_{F_m} \al_t(y)\rge
\]
where the inner product is given by Proposition \ref{swinn} and $P_{F_m}: L_2^0(\Ga_{Q'})\to F_m$ is the orthogonal projection. By the second quantization \cite{LP99}*{Lemma 3.1}, we know that
\[
\al_t(e_{i_1}\otimes \cdots\otimes e_{i_n})= (e^{-t} e_{i_1}+\sqrt{1-e^{-2t}} e_{N+i_1}) \otimes \cdots \otimes (e^{-t} e_{i_n} +\sqrt{1-e^{-2t}} e_{N+i_n}).
\]
It follows that
\[
P_{F_m}\al_t(x) = \sum_{B\subset \{1,\cdots, n\}, |B|=m} (1-e^{-2t})^{m/2} e^{-t(n-m)} e_{\pi_B(i_1)}\otimes \cdots \otimes e_{\pi_B(i_n)},
\]
where $\pi_B (i_k)=N+i_k$ for $k\in B$ and $\pi_B(i_k)= i_k$ otherwise. Similarly, we get
\[
P_{F_m}\al_t(y)=\sum_{C\subset \{1,\cdots, n\}, |C|=m} (1-e^{-2t})^{m/2} e^{-t(n-m)} e_{\pi_C(j_1)}\otimes \cdots \otimes e_{\pi_C(j_n)},
\]
where $\pi_C (j_k)=N+j_k$ for $k\in C$ and $\pi_C(j_k)= j_k$ otherwise. By Proposition \ref{swinn}, $\lge P_{F_m} \al_t(x), P_{F_m} \al_t(y)\rge$ is nonzero only if $\{\pi_B(i_1),\cdots, \pi_B(i_n)\}$ and $\{\pi_C(j_1),\cdots, \pi_C(j_n)\}$ are equal as multisets. Hence, the indices in $B$ have to be paired with the indices in $C$ when we compute $\lge e_{\pi_B(i_1)}\otimes \cdots \otimes e_{\pi_B(i_n)}, e_{\pi_C(j_1)}\otimes \cdots \otimes e_{\pi_C(j_n)}\rge$ using Proposition \ref{swinn}. For every fixed $B$, pairing all the possible $C$ with $B$ and the corresponding $C^c$ with $B^c$ gives all the bipartite partitions of $\vec{i}\sqcup \vec{j}$. Using Proposition \ref{swinn} again, we see that
\[
\lge P_{F_m}\al_t(x), P_{F_m} \al_t(y)\rge = (1-e^{-2t})^{m} e^{-2t(n-m)}\sum_{B\subset \{1,\cdots, n\}, |B|=m} \lge x,y\rge.
\]
By linearity, this identity holds for arbitrary $x,y\in H^n$. Hence,
\[
\lge P_{k-1} \al_t(x), P_{k-1}\al_t(y)\rge = \sum_{m=k}^n (1-e^{-2t})^{m} e^{-2t(n-m)} {n\choose m}\lge x,y\rge.
\]
Since the Ornstein--Uhlenbeck semigroup $T_t$ is self-adjoint on $L_2(\Ga_Q)$ and $\al_t$ is trace preserving, we have for $x,y\in (\cz^N\oplus 0)^{\otimes n}$,
\[
\lge (\al_{t^k}-\id)(x), (\al_{t^k}-\id)(y)\rge = 2(\lge x,y\rge -\lge x, T_{t^k}(y)\rge ) = 2(1-e^{-nt^k})\lge x,y\rge.
\]
The rest of the proof is just numerical estimate, which is provided in the proof of \cite{Av}*{Proposition 5.1}.
\end{proof}
The following is the main result of this section.
\begin{theorem}
  Let $Q$ be a real symmetric $N\times N$ matrix with $\max_{1\le i,j\le N}|q_{ij}|<1$ and $N<\8$. Then $\Ga_Q$ is strongly solid.
\end{theorem}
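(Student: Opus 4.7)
The plan is to follow the s-malleable deformation/rigidity strategy of Ozawa--Popa \cite{OP10}, as adapted by Houdayer--Shlyakhtenko \cite{HS11} and Avsec \cite{Av} in the $q$-Gaussian setting; all the heavy lifting has already been done in the preceding subsections. First I would verify that the deformation $\al_t$ on $\Ga_{Q'}$ is s-malleable in Popa's sense: the orthogonal involution $\bt := \id_{\rz^N} \oplus (-\id_{\rz^N})$ on $\rz^N \oplus \rz^N$ lifts via the second quantization functor to a trace-preserving $*$-automorphism of $\Ga_{Q'}$ which fixes $\Ga_Q$ pointwise and satisfies $\bt \al_t \bt = \al_{-t}$.

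Fix a diffuse amenable subalgebra $P \subset \Ga_Q$ with normalizer $\nx := \nx_{\Ga_Q}(P)''$; the goal is amenability of $\nx$. Popa's transfer-of-rigidity dichotomy applied to the amenable $P$ inside the s-malleable deformation gives: either $P \prec_{\Ga_Q} \cz 1$ in Popa's intertwining sense --- impossible because $P$ is diffuse --- or $\al_t \to \id$ uniformly on $(P)_1$ as $t \to 0^+$. So the second alternative holds. The key technical step is to upgrade uniform convergence from $(P)_1$ to $(\nx)_1$. This is where Proposition \ref{meana} and Theorem \ref{ccap} combine: for a unitary $u \in \nx$ and $v \in (P)_1$, the relation $uvu^* \in (P)_1$ lets one bootstrap smallness of $\|\al_{t^k}(v)-v\|_2$ to smallness of $\|\al_t(u)-u\|_2$ via the inequality $\|(\al_{t^k}-\id)(x)\|_2 \le C_k\|P_{k-1}\al_t(x)\|_2$, provided the ``low-degree'' piece $P_{k-1}\al_t(u)$ can be absorbed uniformly in $u$. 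The finite-rank w*CCAP approximants $\psi_\al$ of Theorem \ref{ccap} --- which are built from functions of the number operator transported via the embedding $\pi_\ux$ --- supply exactly this absorption, following \cite{Av}*{Section 5} essentially verbatim.

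Once $\al_t \to \id$ uniformly on $(\nx)_1$, amenability of $\nx$ would be extracted from Proposition \ref{wkcon}. Fix $n > -\log N/\log q$ and decompose $L_2(\Ga_{Q'}) \ominus L_2(\Ga_Q) = E_{n-1} \oplus E_{n-1}^\perp$. For every $\eps > 0$ there is some $t > 0$ with $\sup_{u \in \ux(\nx)} \|\al_t(u)-u\|_2 < \eps$, so up to error $\eps$ the $\nx$--$\nx$-bimodule $L_2(\Ga_{Q'})$ looks like $L_2(\Ga_Q) \oplus [L_2(\Ga_{Q'})\ominus L_2(\Ga_Q)]$ compatibly with the decomposition above. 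By Proposition \ref{wkcon}, $E_{n-1}^\perp$ is weakly contained in the coarse $\Ga_Q$--$\Ga_Q$-bimodule, and $E_{n-1}$ is a finite direct sum of subbimodules each also coarsely absorbed by the standard argument of \cite{Av}. Combining, $L_2(\nx)$ is weakly contained, as an $\nx$--$\nx$-bimodule, in the coarse $\nx$--$\nx$-bimodule; Connes' characterization of amenability then yields that $\nx$ is amenable.

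The most delicate step will be the normalizer upgrade in the second paragraph: the interplay between Proposition \ref{meana} and the specific form of the CCAP approximants $\psi_\al$ requires some care, and one must verify that Avsec's scheme genuinely carries over to our mixed-$q$ setting. The verification is essentially routine, however, because Propositions \ref{meana} and \ref{wkcon} are designed as precise analogues of Avsec's corresponding statements, and the approximants $\psi_\al = \pi_\ux^{-1} \circ (\varphi_\al(A) \otimes \id) \circ \pi_\ux$ retain the structural property --- namely, being functions of the number operator --- that drives Avsec's normalizer-upgrade argument.
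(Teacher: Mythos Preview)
Your proposal is correct and follows the same approach as the paper: both invoke the Ozawa--Popa/Houdayer--Shlyakhtenko/Avsec machinery, feeding in precisely the three ingredients established earlier (Theorem \ref{ccap} for w*CCAP, Proposition \ref{wkcon} for weak containment of $E_{n-1}^\perp$ in the coarse bimodule, and Proposition \ref{meana} for the deformation estimate). The paper's own proof is in fact even terser than yours---it is a bare citation to \cite{Av}*{Theorem B} and \cite{HS11}*{Theorem 3.5}---so your expanded sketch is entirely in the intended spirit; just be aware that the precise internal logic (in particular the dichotomy you state for the amenable $P$ and the mechanism by which CCAP enters the normalizer upgrade) should be checked line-by-line against \cite{Av}, since the roles of amenability, weak containment, and spectral gap are intertwined somewhat differently there than your paragraph suggests.
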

\begin{proof}
  The proof is the same as that of \cite{Av}*{Theorem B}, with the help of Theorem \ref{ccap}, Proposition \ref{wkcon} and Proposition \ref{meana}. The argument in \cite{Av} follows literally the same strategy as that of \cite{HS11}*{Theorem 3.5}, which in turn is a suitable modification of \cites{OP10,OP10b}.
\end{proof}

\begin{appendix}
\section{Speicher's central limit theorem}
\begin{proof}[Proof of Theorem \ref{clt}]
 The proof is rephrased from \cite{Sp93} and also follows \cite{JPP}. We first show that the convergence holds on average, and then prove almost sure convergence using the Borel--Cantelli lemma. We write
 \begin{align}\label{trac}
   &\tau_m(\td x_{i_1}(m)\cdots \td x_{i_s}(m)) = \frac1{m^{s/2}}\sum_{\vec{k}\in [m]^s}\tau_m(x_{i_1}(k_1)\cdots x_{i_s}(k_s))\\
   & = \frac1{m^{s/2}}\sum_{\si\in P(s)}\sum_{\substack{\vec{k}\in[m]^s\\ \si(\vec{k})=\si}} \tau_m(x_{i_1}(k_1)\cdots x_{i_s}(k_s))=:\frac1{m^{s/2}}\sum_{\si\in P(s)} \De_\si\nonumber.
 \end{align}
By the commutation/anticommutation relation, $\De_\si=0$ if $\si$ contains a singleton. Note that $|\tau_m(x_{i_1}(k_1)\cdots x_{i_s}(k_s))|\le 1$. If $\si$ has $r$ blocks, then $\De_\si\le m(m-1)\cdots (m-r+1)$. Hence,
\begin{equation}\label{nonp}
  \lim_{m\to\8}\frac1{m^{s/2}}\De_\si = 0
\end{equation}
for $r<s/2$ and thus for $\si\in P(s)\setminus P_2(s)$ since the singleton case is automatically true. Our argument so far is independent of $\om\in \Om$ so that \eqref{nonp} holds for all $\om\in\Om$. The theorem follows immediately from \eqref{nonp} if $s$ is odd. Therefore, we only need to consider $\si\in P_2(s)$ in \eqref{trac}. To this end, we write $\si=\{\{e_1,z_1\}, \cdots, \{e_{s/2}, z_{s/2}\}\}$. Since $\si(\vec{k})=\si$ is a pair partition, if $k_j=k_l$, then $i_j=i_l$ in order for $x_{i_j}(k_j)$ and $x_{i_l}(k_l)$ to cancel out. Hence we may assume $\si\le \si(\vec{i})$. In this case, if $\{r,t\}\in I(\si)$, then we have to switch $x_{i({e_r})}(k({e_r}))$ and $x_{i({e_t})}(k({e_t}))$ to cancel the corresponding $x_{i({z_r})}(k({z_r}))$ and $x_{i({z_t})}(k({z_t}))$ terms, which yields
\begin{equation}\label{sigpro}
\tau_m(x_{i_1}(k_1)\cdots x_{i_s}(k_s)) = \prod_{\{r,t\}\in I(\si)} \eps([i(e_r),k(e_r)], [i(e_t),k(e_t)]).
\end{equation}
By independence and counting the elements in $\{\vec{k}\in[m]^s| \si(\vec{k})=\si\}$, we find
\begin{align*}
  &\ez(\De_\si)=\sum_{\substack{\vec{k}\in[m]^s\\ \si(\vec{k})=\si}} \prod_{\{r,t\}\in I(\si)} q(i(e_r),i(e_t)) \\
 &= m(m-1)\cdots (m-s/2+1) \prod_{\{r,t\}\in I(\si)} q(i(e_r),i(e_t)).
\end{align*}
Combining together, we have
\begin{equation}\label{etmom}
  \lim_{m\to\8}\ez(\tau_m(\td x_{i_1}(m)\cdots \td x_{i_s}(m)))= \sum_{\substack{\si\in P_2(s) \\ \si\le \si(\vec{i})}} \prod_{\{r,t\}\in I(\si)} q(i(e_r),i(e_t)).
\end{equation}
It remains to prove the almost sure convergence. Put $X_m=\tau_m(\td x_{i_1}(m)\cdots \td x_{i_s}(m)))$ and $E_m(\al)=\{\om: |X_m-\ez X_m|\ge \al\}$. Then we only need to show $\pz(\limsup_m E_m(\al)) = 0$. By the Borel--Cantelli lemma and Chebyshev's inequality, it suffices to show that for any $\al>0$,
$$\sum_{m=1}^\8\pz(E_m(\al))\le \frac1{\al^2}\sum_{m=1}^\8\Var (X_m)<\8,
$$
where $\Var(X_m)$ is the variance of $X_m$. Decompose $X_m=Y_m+Z_m$ where $Y_m$ corresponds to sum over all pair partitions in \eqref{trac} and $Z_m=X_m-Y_m$. Since \eqref{nonp} holds for $\si\in P(s)\setminus P_2(s)$, we have $\lim_{m\to \8} X_m- Y_m=0$ for all $\om\in \Om$. But $Z_m$ is uniformly bounded, $\lim_{m\to \8}\Var(Z_m)=0$. Therefore, we only need to show that $\sum_{m=1}^\8 \Var(Y_m)<\8$. Write
\begin{align*}
  \Var(Y_m) = \frac1{m^s}\sum_{\si,\pi\in P_2(s)}\sum_{\vec{k}: \si(\vec{k})=\si\atop \vec{l}: \si(\vec{l})=\pi} V_{\vec{k},\vec{l}},
\end{align*}
where
\begin{align}
  V_{\vec{k},\vec{l}}&= \ez [ \tau_m(x_{i_1}(k_1)\cdots x_{i_s}(k_s))\tau_m(x_{i_1}(l_1)\cdots x_{i_s}(l_s))]\nonumber\\
  &\quad - \ez[\tau_m(x_{i_1}(k_1)\cdots x_{i_s}(k_s))]\ez[\tau_m(x_{i_1}(l_1)\cdots x_{i_s}(l_s))]\nonumber\\
  &= \ez\Big(\prod_{\{r,t\}\in I(\si)} \eps([i(e_r),k(e_r)], [i(e_t),k(e_t)]) \prod_{\{r',t'\}\in I(\pi)}\eps([i(e_{r'}),l(e_{r'})], [i(e_{t'}),l(e_{t'})])\Big)\label{expe}\\
  &\quad -\prod_{\{r,t\}\in I(\si)} q(i(e_r),i(e_t))\prod_{\{r',t'\}\in I(\pi)}q(i(e_{r'}),i(e_{t'})).\nonumber
\end{align}
Let us analyze the product in \eqref{expe}. If $\{k(e_r),k(e_t)\}\neq \{l(e_{r'}),l(e_{t'})\}$ for all $\{r,t\}\in I(\si)$ and $\{r',t'\}\in I(\pi)$, then $V_{\vec{k},\vec{l}}=0$. In order to contribute for $\Var(Y_m)$, there exists at least one pair $\{r,t\}\in I(\si)$ and one pair $\{r',t'\}\in I(\pi)$ such that $\{k(e_r),k(e_t)\} =\{l(e_{r'}),l(e_{t'})\}$. In this case, we have
\[
\#\{\vec{k},\vec{l}: \si(\vec{k})=\si, \si(\vec{l})=\pi\}\le m^{s/2}m^{s/2-2}=m^{s-2}.
\]
Note that $|V_{\vec{k},\vec{l}}|\le 1$ and $C(s):=[\#P_2(s)]^2$ does not depend on $m$. It follows that
\[
 \sum_{m=1}^\8 \Var(Y_m)\le \sum_{m=1}^\8 \frac{C(s)}{m^2}<\8,
 \]
 as desired.
\end{proof}
\begin{rem}\label{cltrk}
In the above argument, we assumed that $\eps((i,k),(j,l))$ are independent for different indices. However, the independence assumption can be weakened if the structure matrix is of the form $Q\otimes\mathds{1}_n$, where $Q$ is an $N\times N$ symmetric matrix with entries in $[-1,1]$. In this case we require that $\eps((i,k),(j,l))$'s are independent (up to symmetric assumption) with \eqref{qprob} for $(i,k),(j,l)\in[N]\times \nz$ and then
\begin{equation}\label{qrepi}
\eps((i+\al N, k), (j+\bt N, l)) = \eps((i,k),(j,l))
\end{equation}
for $\al,\bt=1,\cdots, n-1$. In other words, $\eps=\eps|_{1\le i, j\le N} \otimes \mathds{1}_n$. To verify the claim, we only need to show the dependence introduced in \eqref{qrepi} will not destroy the proof of Theorem \ref{clt}. Indeed, by \eqref{nonp} it suffices to consider pair partitions. Suppose $i_\bt = i_\al+N$. Then $\al$ and  $\bt$ are not in the same pair block of $\si(\vec{i})$. It follows that $k_\al\neq k_\bt$ since $\si(\vec{k})\le \si(\vec{i})$. (If $k_\al=k_\bt$, then $i_\al=i_\bt$ in order for $x_{i_\al}(k_\al)$ and $x_{i_\bt}(k_\bt)$ to cancel.) Hence, the random signs in \eqref{sigpro} are pairwise different. Note that unlike the case in the proof of Theorem \ref{clt}, now we may have
\[
\eps((i_\al, k_\al), (i_\ga, k_\ga))\quad \text{and} \quad \eps((i_\bt,k_\bt),(i_\ga,k_\ga)) = \eps((i_\al, k_\bt),(i_\ga,k_\ga))
\]
in \eqref{sigpro}, but the two random signs are not equal because $k_\al\neq k_\bt$. In other words, the second coordinates $(k_\al, k_\ga)$ in  $\eps((i_\al, k_\al), (i_\ga, k_\ga))$ are never the same for random signs in \eqref{sigpro} even under the weaker condition \eqref{qrepi} so that $(i_\al, i_\ga)$ may be the same for different random signs. The point is that the independence structure in the proof of Theorem \ref{clt} is given via the second coordinates $k_\al$'s. The rest of the argument is the same as for Theorem \ref{clt}. We invite the interested reader to consider the simplest case $Q=q \mathds{1}_N$. In this case we can take $\eps((i,k),(j,l))= \eps((i,k),(i,l))$ and require $\eps((i,k),(i,l))$ to be independent for different $k$ and $l$ up to symmetry.

By this remark, the moment formula \eqref{mome} remains valid with the weaker condition \eqref{qrepi}. The same discussion applies in other parts of the paper when the CLT argument is invoked with \eqref{qrepi}. This subtlety is crucial for our limiting argument in Section \ref{anaprop}.
\end{rem}
\end{appendix}

\section*{Acknowledgement}
M.J. was partially supported by NSF Grant DMS-1201886 and DMS-1501103. A large part of this work was done when Q.Z. was a graduate student in University of Illinois. He thanks the financial support from the graduate college dissertation fellowship. He also thanks the financial support from the Center of Mathematical Sciences and Applications at Harvard University.

\bibliographystyle{alpha}
\bibliography{mixed}
\end{document}